\DeclarePairedDelimiter\floor{\lfloor}{\rfloor}
\newcommand{\lnd}{\operatorname{{\rm LND}}}
\newcommand{\hlnd}{\operatorname{{\rm HLND}}}
\newcommand{\Integ}{\ensuremath{\mathbb{Z}}}
\newcommand{\Nat}{\ensuremath{\mathbb{N}}}
\newcommand{\Rat}{\ensuremath{\mathbb{Q}}}
\newcommand{\Comp}{\ensuremath{\mathbb{C}}}
\newcommand{\Reals}{\ensuremath{\mathbb{R}}}
\newcommand{\aff}{\ensuremath{\mathbb{A}}}
\newcommand{\bk}{{\ensuremath{\rm \bf k}}}
\newcommand{\lb}{\langle}
\newcommand{\rb}{\rangle}
\newcommand{\trdeg}{	\operatorname{{\rm trdeg}}}
\newcommand{\Frac}{		\operatorname{{\rm Frac}}}
\newcommand{\pr}{		\operatorname{{\rm pr}}}
\newcommand{\lcm}{		\operatorname{{\rm lcm}}}
\newcommand{\Pic}{		\operatorname{{\rm Pic}}}
\newcommand{\cotype}{		\operatorname{{\rm cotype}}}
\newcommand{\setspec}[2]{\big\{\,#1\, : \,#2\, \big\}}
\newcommand{\isom}{\cong}
\newcommand{\Sing}{		\operatorname{{\rm Sing}}}
\newcommand{\PPP}{\mathbb{P}}
\newcommand{\height}{		\operatorname{{\rm ht}}}
\newcommand{\Spec}{		\operatorname{{\rm Spec}}}
\newcommand{\Proj}{		\operatorname{{\rm Proj}}}
\newcommand{\NS}{		\operatorname{{\rm NS}}}
\newcommand{\Vol}{\operatorname{{\rm Vol}}}
\newcommand{\Supp}{		\operatorname{{\rm Supp}}}
\newcommand{\OSheaf}{\operatorname{\mathcal O}}
\newtheorem{theorem}[subsubsection]{Theorem}
\newtheorem*{theorem*}{Theorem}
\newtheorem{proposition}[subsubsection]{Proposition}
\newtheorem*{proposition*}{Proposition}
\newtheorem{lemma}[subsubsection]{Lemma}
\newtheorem{corollary}[subsubsection]{Corollary}
\theoremstyle{definition}
\newtheorem{remark}[subsubsection]{Remark}
\newtheorem{definition}[subsubsection]{Definition}
\newtheorem{definitions}[subsubsection]{Definitions}
\newtheorem{nothing}[subsubsection]{}
\newtheorem{nothing*}[subsubsection]{}
\newtheorem{example}[subsubsection]{Example}
\newtheorem{assumption}[subsubsection]{Assumption}
\newtheorem{notation}[subsubsection]{Notation}
\newtheorem{caution}[subsubsection]{Caution}
\newtheorem*{mainconjecture}{Main Conjecture}
\newtheorem*{maintheorem}{Main Theorem}
\newtheorem*{reductiontheorem}{Reduction Theorem}
\newtheorem*{maincor}{Corollary}
\newcommand{\codim}{		\operatorname{{\rm codim}}}
\newcommand{\pgoth}{\mathfrak{p}}
\newcommand{\qgoth}{\mathfrak{q}}
\newcommand{\Div}{		\operatorname{{\rm Div}}}
\renewcommand{\div}{	\operatorname{{\rm div}}}
\newcommand{\Cl}{		\operatorname{{\rm Cl}}}
\newcommand{\CaDiv}{\operatorname{{\rm CaDiv}}}
\newcommand{\CaCl}{		\operatorname{{\rm CaCl}}}
\newcommand{\mult}{\operatorname{{\rm mult}}}
\newcommand{\Meul}{\EuScript{M}}
\newcommand{\Aff}{\mathbb{A}}
\title{The Rigid Pham-Brieskorn Threefolds}
\author{Michael Chitayat and Adrien Dubouloz}
\date{January 2025}
\begin{document}

\maketitle

\begin{abstract}
We show that a $3$-dimensional Pham-Brieskorn hypersurface $\{ X_0^{a_0} + X_1^{a_1} + X_2^{a_2} + X_3^{a_3}=0\}$ in $\mathbb{A}^4$ such that $\min\{a_0, a_1, a_2, a_3 \} \geq 2$ and at most one element $i$ of $\{0,1,2,3\}$ satisfies $a_i = 2$ does not admit a non-trivial action of the additive group $\mathbb{G}_a$. 
\end{abstract}

\section*{Introduction}

An affine variety  $X$ defined over a field $\bk$ of characteristic zero is called \textit{rigid} if it does not admit a non-trivial action of the additive group $\mathbb{G}_{a,\bk}$. Equivalently, if the coordinate ring $B=\Gamma(X,\mathcal{O}_X)$ does not admit a non-zero locally nilpotent $\bk$-derivation, then $B$ is called a \textit{rigid ring}. In this article, we  study the rigidity of a class of varieties called \textit{affine Pham-Brieskorn hypersurfaces}. These varieties, denoted $X_{a_0,\ldots, a_n}=\mathrm{Spec}(B_{a_0,\ldots,a_n})$ are defined by equations of the form $X_0^{a_0} + \cdots +  X_n^{a_n}=0$ in $\mathbb{A}^{n+1}_{\bk}$, where $n\geq 2$ and $a_0,\dots,a_n$ are positive integers. If $a_i=1$ for some $i\in\{0,\ldots,n\}$ then $X_{a_0,\ldots, a_n}$ is isomorphic to $\mathbb{A}^n_{\bk}$ which is clearly not rigid.  If $\bk$ contains $\mathbb{Q}(i)$ and two of the $a_i$ (say $a_0$ and $a_1$) equal $2$, then $X_{a_0,\ldots, a_n}$ is isomorphic to the hypersurface $uv + X_2^{a_2}+\cdots +X_n^{a_n} = 0$ in $\mathbb{A}^{n+1}_{\bk}$. This hypersurface admits non-trivial $\mathbb{G}_{a,\bk}$-actions associated, for instance, to every locally nilpotent $\bk[u]$-derivation $\partial_i=u\tfrac{\partial}{\partial X_i}-a_i{X_i}^{a_i-1}\tfrac{\partial}{\partial v}$, $i=2,\ldots,n$, of its coordinate ring\footnote{Note that the real Pham-Brieskorn surface $X_{2,2,2}=\mathrm{Spec}(\Reals[X,Y,Z] / \lb X^2 + Y^2 + Z^2 \rb)$ is rigid (see e.g. \cite[Theorem 9.27]{freudenburg2017algebraic}), so the condition that $\bk$ contains $\mathbb{Q}(i)$ cannot be relaxed.}. In light of these observations, the following conjecture was proposed in \cite{flenner2003rational,Kali-Zaid_2000}: 
    
\begin{mainconjecture}\label{PBConjecture}
    An affine Pham-Brieskorn hypersurface  $X_{a_0, \dots, a_n}$ over an algebraically closed field of characteristic zero is rigid if and only if  $\min\{a_0, \dots,a_n \} \geq 2$ and at most one element $i$ of $\{0,\dots ,n\}$ satisfies $a_i = 2$. 
\end{mainconjecture}

The conjecture was proved by Kaliman and Zaidenberg \cite[Lemma 4]{Kali-Zaid_2000} in the $n=2$ case; despite several known partial results during the last decade (see in particular \cite{freudenburg2013}, \cite{DFM2017}, \cite{affineCones}, \cite{cheltsov_park_won_2016}, \cite{CylindersInDelPezzoSurfaces}, \cite{Chitayat_Daigle_2019}, \cite{LiuSunExtensions}) the conjecture remained open in dimension $n\geq 3$ (see \cite[Conjecture 1.22]{cheltsov2020cylinders}). In \cite{ChitayatThesis}, the first author nearly completed the proof of the $n=3$ case of the conjecture with the $(2,3,4,12)$ and $(2,3,5,30)$ cases remaining unsolved. 

In this article, we prove the $(2,3,4,12)$ and $(2,3,5,30)$ cases, completing the $n=3$ case of the conjecture. Our exposition also simplifies many of the results in \cite{ChitayatThesis} and reduces the problem (in all dimensions) to the cases where $\Proj B_{a_0, \dots, a_n}$ is a well-formed Fano variety. In order to provide the reader with a self-contained proof of the $n=3$ case of the Main Conjecture, we include all the necessary statements from $\cite{ChitayatThesis}$ as well as the proofs of those results that demonstrate the main ideas.  

Every affine Pham-Brieskorn hypersurface $X=X_{a_0,\ldots, a_n}$ identifies with the affine cone over the weighted hypersurface $\hat{X}$ defined by the weighted homogeneous equation $X_0^{a_0} + \cdots +  X_n^{a_n}=0$ in the weighted projective space $\mathbb{P}=\mathbb{P}(w_0,\ldots, w_n)$, where $w_i= \lcm(a_0, \dots, a_n)/a_i$. We say that  $X$ is a \textit{well-formed affine Pham-Brieskorn hypersurface} if the corresponding $\hat{X}$ is a well-formed weighted hypersurface in $\mathbb{P}$; that is, if  $\gcd(w_0,\dots,w_{i-1},\hat{w_i},w_{i+1}, \dots , w_n) = 1$ for every  $i = 0,\dots,n$ and $\codim_{\hat{X}}(\hat{X} \cap \Sing(\mathbb{P})) \geq 2$. Proposition \ref{PBWellFormed} provides a convenient arithmetic characterization of these well-formed hypersurfaces: they are exactly those for which $a_i$ divides $\lcm(a_0,\ldots, \hat{a}_i,\ldots a_n)$ for every $i=0,\ldots, n$. 

The following is a reduction of the Main Conjecture to the special case of well-formed affine Pham-Brieskorn hypersurfaces and an induction on the dimension:

\begin{reductiontheorem}\cite[Theorem 1.3.12]{ChitayatThesis} Let $n \geq 3$. The Main Conjecture holds in dimension $n$  provided it holds both in dimension $n-1$ and for well-formed affine Pham-Brieskorn hypersurfaces $X_{a_0,\ldots, a_n}$ of dimension $n$.
\end{reductiontheorem}

In particular, since the Main Conjecture holds for $n=2$, its verification in the case $n=3$ is reduced to the case of well-formed affine Pham-Brieskorn threefolds. Building on this result, we complete the proof of the Main Conjecture in the $n=3$ case, namely:

\begin{maintheorem}
   Let $X_{a_0,a_1,a_2,a_3}=\mathrm{Spec}(B_{a_0,a_1,a_2,a_3})$ be an affine Pham-Brieskorn threefold hypersurface.  If $\min\{a_0, a_1, a_2,a_3 \} \geq 2$ and at most one element $i$ of $\{0,1,2,3\}$ satisfies $a_i = 2$, then $X$ is rigid.
\end{maintheorem}

\begin{maincor} The Main Conjecture is true for $n=4$ if and only if it is true for well-formed affine Pham-Brieskorn fourfolds $X_{a_0,\ldots, a_4}$.
\end{maincor}

Let us briefly explain the scheme of the proof of the Main Theorem. Reducing to well-formed affine Pham-Brieskorn hypersurfaces $X=X_{a_0,a_1,a_2, a_3}$ with associated weighted hypersurfaces $\hat{X}$ in $\mathbb{P}(w_0,w_1,w_2, w_3)$ leads us to consider the following two sub-classes: 
\begin{enumerate}[\rm(1)]
 \item those $\hat{X}$ with pseudoeffective canonical divisor $K_{\hat{X}}$;
 \item a finite list of cases for which $\hat{X}$ is a del Pezzo surface with cyclic quotient singularities.
\end{enumerate}

The rigidity of well-formed affine Pham-Brieskorn hypersurfaces $X$ (of any dimension greater or equal to 2) for which $K_{\hat{X}}$ is pseudoeffective follows from Proposition \ref{noCylinder} which shows that a normal projective variety with pseudoeffective $\Rat$-Cartier canonical divisor and log-canonical singularities cannot contain a cylinder. In the second case, which contains for instance the affine cone $X_{3,3,3,3}$ over the Fermat cubic surface in $\mathbb{P}^3$,  it is known (by a general principle established by Kishimoto, Prokhorov and Zaidenberg in \cite{KishimotoProkhorovZaidenberg}) that the rigidity of $X$ is equivalent to the non-existence of an anti-canonical polar cylinder in the del Pezzo surface $\hat{X}$ (see Subsection \ref{subsec:cylinders} for the definition). Several cases in (2) turn out to be covered by a series of results on the classification of anti-canonical polar cylinders in del Pezzo surfaces with Du Val singularities due to Cheltsov, Park and Won \cite{cheltsov_park_won_2016}. We are eventually left with the study of the two cases $(a_0,a_1,a_2,a_3) \in \{(2,3,4,12) , (2,3,5,30)\}$ for which the corresponding del Pezzo surfaces have cyclic quotient singularities that are not Du Val. In each of these two cases, we are able to extract from the specific geometry of the del Pezzo surface at hand the non-existence of anti-canonical polar cylinders. These two particular examples raise and motivate the general problem of finding methods to expand the classification given in \cite{cheltsov_park_won_2016} to include del Pezzo surfaces with klt singularities.

\medskip

The article is organized as follows. Section \ref{Sec:GeoPrelims} gathers the preliminary algebraic and geometric  results required in Section \ref{Sec:Cotype0}. In particular, we include facts about locally nilpotent derivations, $\Rat$-divisors, weighted hypersurfaces in weighted projective spaces, cylinders and polar cylinders. We also include some necessary results about singularities of log pairs and the birational geometry of singular del Pezzo surfaces. In Section \ref{sec:reduction}, we establish the theorem reducing the Main Conjecture to the case of well-formed hypersurfaces and review the known relationship between the rigidity of the ring $B_{a_0,\dots, a_n}$ and the non-existence of canonical and anti-canonical polar cylinders in $\Proj(B_{a_0, \dots, a_n})$. We also prove rigidity of $B_{a_0,\dots, a_n}$ when $\Proj B_{a_0, \dots, a_n}$ has trivial or ample canonical divisor. Section \ref{Sec:Cotype0} resolves the $n = 3$ case of the Main Conjecture. Finally, Section \ref{Sec:Higher} gives some remarks on the Main Conjecture in higher dimensions. 
\\
\\
\textbf{Acknowledgements.} This project was partially funded by the Mitacs Globalink Research Award number IT33113 which helped  enable this collaboration. The first author would like to thank l'Institut de Math\'ematiques de Bourgogne for hosting and partially funding his visit to Dijon, and the University of Ottawa for facilitating his visit. The second author received partial support from the French ANR Project ANR-18-CE40-0003-01.

\section*{Assumptions, Conventions and Notation}

We assume the following throughout this article. 
\begin{itemize}
    \item All rings are commutative, associative, unital and have characteristic zero.
    \item We use the symbols $\Nat, \Nat^+, \Integ$ to denote the sets of natural numbers, positive integers and integers respectively. 
    \item We use the notation $``\subseteq"$ and $``\supseteq"$ for inclusion and containment and  $``\subset"$ and $``\supset"$ for proper inclusion and proper containment.
    \item Given an $\Nat$-graded ring $B = \bigoplus_{i \in \Nat} B_i$, we denote the irrelevant ideal by $B_+$. 
    \item All varieties and schemes are assumed to be defined over an algebraically closed field $\bk$ of characteristic zero which we fix throughout, a \textit{variety} being by convention an integral separated scheme of finite type. A \textit{curve} is a one-dimensional variety. A \textit{surface} is a two-dimensional variety. In particular, curves and surfaces are irreducible and reduced.  

    \item Whenever we discuss \textit{divisors} on a variety $X$, we assume implicitly that $X$ is normal. \item The function field of a variety $X$ is denoted by $K(X)$. The groups $\Div(X)$ and $\CaDiv(X)$ denote the groups of \textit{Weil divisors} and \textit{Cartier divisors} of $X$. The groups $\Cl(X)$, $\CaCl(X)$ and $\Pic(X)$ respectively denote the \textit{divisor class} and \textit{Cartier class} and \textit{Picard} groups of $X$. Since $X$ is integral, $\CaCl(X)$ is canonically isomorphic to the Picard group. Recall also that if $X$ is regular, the natural map $\CaCl(X)\to \Cl(X)$ is an isomorphism.    
\end{itemize}

\section{Preliminaries}\label{Sec:GeoPrelims}
\label{Sec:algPrelims}

\subsection{Locally Nilpotent Derivations}

\begin{nothing}
Let $B$ be a ring. A derivation $D:B \to B$ is \textit{locally nilpotent} if for every $b \in B$ there exists some $n \in \Nat$ such that $D^n(b) = 0$. If $B$ is a ring, the set of locally nilpotent derivations $D : B \to B$ is denoted $\lnd(B)$. If $\lnd(B) = \{0\}$, we say that $B$ is \textit{rigid}. Given $D \in \lnd(B)$, an element $t \in B$ is called a \textit{local slice of $D$} if $D(t) \neq 0$ and $D^2(t) = 0$. In particular, if $D(t) = 1$, then $t$ is a \textit{slice of $D$}. A derivation $D : B \to B$ is \textit{reducible} if there exists some $b \in B$ such that $D(B) \subseteq \lb b \rb \neq B$. If no such $b$ exists, then $D$ is \textit{irreducible}. 

Let $A \subseteq B$ be an inclusion of integral domains. Then, $A$ is \textit{factorially closed in $B$} if for all $x,y \in B \setminus \{0\}$, $xy \in A$ implies that $x,y \in A$. 

We say that a ring $B$ is \textit{$\bk$-affine} if it is finitely generated as a $\bk$-algebra. We call $B$ a \textit{$\bk$-domain} if it is a $\bk$-algebra that is also an integral domain. 
\end{nothing}

\begin{definitions}\cite[Section 2.3]{freudenburg2013}\label{absoluteDegree}
    Let $B$ be a $\bk$-domain. Given $b \in B \setminus \{0\}$ and $D \in \lnd(B)$,  define $\deg_D(b) = \max\{n \in \mathbb{N} : D^n(b) \neq 0\}$; define $\deg_D(0) = -\infty$. It is well-known that the map $\deg_D : B \to \Nat \cup \{-\infty\}$ is a degree function.

    If $B$ is not rigid, then given $f \in B$, the	\textit{absolute degree} of $f$ is defined by
		$$ |f|_B = \min\setspec{\deg_D(f)}{D \in \lnd(B) \setminus\{0\}}.$$
		If $B$ is rigid, define $|f|_B = -\infty$ if $f = 0$, and $|f|_B = \infty$ otherwise. 
\end{definitions}

\begin{theorem} \label {p0cfi2k309cbqp90ws}
		Let $B$ be an integral domain, let $D : B \to B$ be a derivation, and let $A = \ker(D)$. The following facts are well known. (Refer to  \cite{freudenburg2017algebraic} for instance.) 
		
		\begin{enumerate}	
			\item[\rm(a)] \label{c0ovjn3vr7} If $D$ is locally nilpotent, then $A$ is a factorially closed subring of $B$.
			Consequently, if $D$ is locally nilpotent and $\bk$ is a field included in $B$, then $D$ is a $\bk$-derivation.

			\item[\rm(b)] \label{teik5i68a9we} Assume that $\Rat \subseteq B$.
			If $D \neq 0$ is locally nilpotent then $D$ has a local slice $t \in B$.
			For any such $t$, if we define $s = D(t)$ then $B_s = A_s[t]$ is a polynomial ring in one variable over $A_s$.
			Consequently, $\trdeg(B/A) = 1$ and $\Frac(B) \supset \Frac(A)$ is purely transcendental of transcendence degree one.
		
			\item[\rm(c)] If $b \in B \setminus \{0\}$, then the derivation $bD : B \to B$ is locally nilpotent if and only if $D$ is locally nilpotent and $b \in A$.
			
			\item[\rm(d)] \label {02dj7edj9w34diey} If $D \neq 0$ is locally nilpotent and $B$ satisfies the ascending chain condition for principal ideals, then there exists an irreducible	locally nilpotent derivation $\delta : B \to B$ such that $D = a \delta$ for some $a \in A$.  
		\end{enumerate}
	\end{theorem}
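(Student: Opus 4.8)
The plan is to verify each of the four statements by the standard arguments, all of which rely only on the Leibniz rule together with the hypotheses that $B$ is an integral domain containing $\Rat$. For \textbf{(a)} the key point is the additivity of $\deg_D$ on products: if $x,y\in B\setminus\{0\}$ with $\deg_D(x)=m$ and $\deg_D(y)=n$ (both finite, as $D$ is locally nilpotent), then expanding $D^{m+n}(xy)$ by Leibniz leaves only the single term $\binom{m+n}{m}D^m(x)D^n(y)$, which is nonzero because $B$ is a domain and $\binom{m+n}{m}$ is invertible in characteristic zero; hence $\deg_D(xy)=\deg_D(x)+\deg_D(y)$. Given $xy\in A=\ker D$, this forces $m=n=0$, i.e. $x,y\in A$; that $A$ is a subring follows from $D(1)=0$ and the Leibniz rule. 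The consequence is then immediate: every unit $u\in B$ lies in $A$ since $u\cdot u^{-1}=1\in A$, so if $\bk\subseteq B$ is a field then $\bk\setminus\{0\}\subseteq A$ and $D$ is a $\bk$-derivation.

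For \textbf{(b)}, I would pick $b$ with $D(b)\neq 0$ and set $n=\deg_D(b)\ge 1$; then $t:=D^{n-1}(b)$ satisfies $D(t)\neq 0$ and $D^2(t)=0$, so $t$ is a local slice and $s:=D(t)\in A$. Next I check that localizing at $s\in A$ is harmless: $D$ extends to a locally nilpotent derivation $D_s$ of $B_s$ with $\ker D_s=A_s$. In $B_s$ the element $t/s$ is a slice (indeed $D_s(t/s)=s/s=1$ because $s\in A$), so the Slice Theorem gives $B_s=(\ker D_s)[t/s]=A_s[t]$, a polynomial ring in one variable over $A_s$. Passing to fraction fields, $\Frac(B)=\Frac(B_s)=\Frac(A_s)(t)=\Frac(A)(t)$ is purely transcendental of transcendence degree one over $\Frac(A)$, whence $\trdeg(B/A)=1$.

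For \textbf{(c)}, one direction is the identity $(bD)^k=b^kD^k$, valid when $b\in A$, which shows $bD$ is locally nilpotent if $D$ is and $b\in A$. For the converse, assume $bD$ is locally nilpotent with $b\neq 0$; take a local slice $t$ of $bD$ and set $s=(bD)(t)=bD(t)$. Then $(bD)(s)=0$ forces $D(s)=0$ since $B$ is a domain, and by \textbf{(a)} applied to $bD$ the ring $\ker(bD)$ is factorially closed, so from $s=b\cdot D(t)\in\ker(bD)$ we get $b\in\ker(bD)$; this gives $bD(b)=0$, i.e. $b\in\ker D$, and then $(bD)^k=b^kD^k$ with $B$ a domain shows $D$ is locally nilpotent. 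For \textbf{(d)}, I would factor out common divisors iteratively: if $D\neq 0$ is reducible, write $D=b_1D_1$ with $b_1$ a non-unit dividing all of $D(B)$; then $D_1$ is again a derivation, and by \textbf{(c)} it is locally nilpotent with $b_1\in\ker D_1$. Repeating produces either an irreducible $\delta$ after finitely many steps or an infinite chain $D=b_1\cdots b_kD_k$ with all $b_i$ non-units; fixing $x_0$ with $D(x_0)\neq 0$, the relations $D_k(x_0)=b_{k+1}D_{k+1}(x_0)$ with $D_k(x_0)\neq 0$ (domain) and $b_{k+1}$ a non-unit produce a strictly increasing chain of principal ideals $(D_1(x_0))\subsetneq(D_2(x_0))\subsetneq\cdots$, contradicting the ACCP hypothesis. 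Hence the process terminates with $D=a\delta$, $\delta$ irreducible and locally nilpotent, and applying \textbf{(c)} once more to $a\delta=D$ yields $a\in\ker\delta=\ker D=A$.

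Since every ingredient is classical, there is no genuine obstacle here; the only place that needs an external input is the Slice Theorem invoked in \textbf{(b)} (a locally nilpotent derivation with a slice makes $B$ a polynomial ring over its kernel), and the only essential use of characteristic zero is the non-vanishing of the binomial coefficient in the degree-additivity step of \textbf{(a)}.
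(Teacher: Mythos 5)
The paper does not give a proof of this theorem; it merely records these facts as ``well known'' with a citation to Freudenburg's monograph \cite{freudenburg2017algebraic}. Your write-up is therefore a reconstruction of the standard arguments rather than a comparison target, and it is essentially correct. A few small remarks on precision. In part (a), you do not actually need $\binom{m+n}{m}$ to be \emph{invertible} in $B$ (the statement of (a) does not assume $\Rat\subseteq B$, only that the ring has characteristic zero per the paper's global conventions); it suffices that the binomial coefficient is a nonzero integer, which in a characteristic-zero domain is a non-zero-divisor, and that is all the argument uses. Relatedly, your opening sentence says everything relies on $B$ containing $\Rat$, but only part (b) assumes that; (a), (c), (d) only need the standing characteristic-zero hypothesis. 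In the converse direction of part (c), your argument takes a local slice of $bD$, which implicitly assumes $bD\neq 0$; the degenerate case $D=0$ should be dispatched in a line (then $bD=0$ is trivially LND, $D$ is trivially LND, and $A=B$ so $b\in A$). Otherwise the chain of deductions $s=bD(t)\in\ker(bD)$, factorial closedness of $\ker(bD)$ from (a), hence $b\in\ker(bD)$, hence $bD(b)=0$, hence $D(b)=0$, hence $(bD)^k=b^kD^k$ and $D$ LND is clean and correct, as is the ACCP argument in (d) and the localization/Slice-Theorem argument in (b) (where you correctly observe that inverting $s\in A$ turns the local slice $t$ into the slice $t/s$, and that $A_s[t/s]=A_s[t]$ because $s$ is a unit).
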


	\begin{nothing}  \label {0j3w4c65mdry7u55gfyh6os}
        Let $B = \bigoplus_{n \in \Integ} B_n$ be a $\Integ$-graded ring. A derivation $D : B \to B$ is \textit{homogeneous} if there exists an $h\in \Integ$ such that $D(B_g) \subseteq B_{g+h}$ for all $g \in \Integ$. If $D$ is homogeneous and $D \neq 0$, then $h$ is unique and we say that $D$ is \textit{homogeneous of degree $h$}. The zero derivation is said to be homogeneous of degree $-\infty$. The set of homogeneous locally nilpotent derivations of $B$ is denoted $\hlnd(B)$.

        A \textit{graded subring} of $B$ is a subring $A$ of $B$ satisfying  $A = \bigoplus_{n \in \Integ} (A \cap B_n)$.	If $A$ is a graded subring of $B$ then $A$ too is a $\Integ$-graded ring. In particular, if $D \in \hlnd(B)$ then $\ker(D)$ is a $\Integ$-graded subring of $B$.        
	\end{nothing}

        \begin{proposition}(\cite[p.57]{Dai:TameWild}\label{homogenization})
            If $B$ is a $\Integ$-graded affine $\bk$-domain, then $\lnd(B) \neq \{0\}$ if and only if $\hlnd(B) \neq \{0\}$. 
        \end{proposition}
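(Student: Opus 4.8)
The plan is to read off the statement directly from the properties of the homogenization map recorded in \ref{0j3w4c65mdry7u55gfyh6os}. Since the assertion is a biconditional, I would argue the two implications separately, the first being essentially trivial and the second relying on the cited construction.

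For the implication $\hlnd(B) \neq \{0\} \Rightarrow \lnd(B) \neq \{0\}$: a homogeneous locally nilpotent derivation is in particular a locally nilpotent derivation, so $\hlnd(B) \subseteq \lnd(B)$ as subsets of the set of all derivations of $B$. Hence any nonzero element of $\hlnd(B)$ witnesses $\lnd(B) \neq \{0\}$, and this direction needs nothing beyond the definitions.

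For the converse $\lnd(B) \neq \{0\} \Rightarrow \hlnd(B) \neq \{0\}$: here I would invoke the homogenization. As $B$ is a $\Integ$-graded affine $\bk$-domain, it is in particular a finitely generated $\bk$-algebra, so \ref{0j3w4c65mdry7u55gfyh6os} supplies a set map $\lnd(B) \to \hlnd(B)$, $D \mapsto \tilde D$, with the property that $\tilde D = 0$ if and only if $D = 0$. Given a nonzero $D \in \lnd(B)$, its homogenization $\tilde D$ then lies in $\hlnd(B)$ and is nonzero, so $\hlnd(B) \neq \{0\}$.

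The only step carrying genuine content is the existence of the homogenization map with the stated non-vanishing property, but this is exactly what is recalled in \ref{0j3w4c65mdry7u55gfyh6os} (with reference to \cite{Dai:TameWild}), so there is no real obstacle to clear here; the proof is a one-line application of that fact in each direction. The only thing I would double-check is that the grading hypothesis under which the homogenization is constructed matches the $\Integ$-graded setting of the statement, which the recollection in \ref{0j3w4c65mdry7u55gfyh6os} already handles.
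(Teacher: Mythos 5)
Your argument is correct and is exactly the proof the paper intends: the paragraph \ref{0j3w4c65mdry7u55gfyh6os} closes with ``In particular, we obtain the following'' precisely because the proposition is meant to be read off from the inclusion $\hlnd(B) \subseteq \lnd(B)$ in one direction and from the non-vanishing property of the homogenization map $D \mapsto \tilde D$ in the other. Nothing more is needed, and your double-check that the $\Integ$-graded affine hypothesis matches the setting in which the homogenization is recalled is the right thing to verify.
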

        
\subsection{The cotype of a tuple}

	\begin{definitions}(\cite[Section 3.9]{Chitayat_Daigle_2019}\label{typeDef})
		Let $n \geq 2$ and $S = (b_0, \dots,  b_n) \in \Integ^{n+1}$.  
		\begin{itemize}
			
			\item Define\footnote{By convention, $\gcd(S)\geq 0$ and $\lcm(S)\geq 0$.}  $\gcd(S) = \gcd(b_0, \dots, b_n)$ and $\lcm(S) = \lcm(b_0,  \dots,  b_n)$.
			
			\item If $\gcd(S) = 1$, we say that $S$ is \textit{normal}.
			If $S \neq (0, \dots, 0)$ and $d = \gcd(S)$, then the tuple $S' = (\frac{b_0}{d},  \dots,   \frac{b_n}{d})$ is normal,
			and is called the \textit{normalization of $S$}.
			
			\item For each $j \in \{0,\dots,n\}$, define $S_j = (b_0, \dots , \widehat{b_j} , \dots, b_n)$.
			
			\item We define 			$\cotype(S) = |\setspec{i \in \{ 0, \dots, n \} }{ \lcm(S_i) \neq \lcm(S) }| =| \setspec{i \in \{ 0, \dots, n \} }{ b_i \nmid \lcm(S_i) }|$.    \end{itemize} 
    Note that $\cotype(S) \in \{0,1,\dots,n+1\}$ and that, if $S'$ is the normalization of $S$, then $\cotype(S) = \cotype(S')$.

	\end{definitions}

	\begin{definition}\label{ordering}
		Let $n \geq 2$.
		\begin{itemize}
			
			\item Given $S = (a_0,\dots,a_n) \in (\Nat^+)^{n+1}$ and $i \in \{0, \dots, n\}$, define $g_i(S) = \gcd(a_i, \lcm(S_i))$.
			
			\item Let $S = (a_0,\dots,a_n)$ and $S' = (a_0',\dots,a_n')$ be elements of $(\Nat^+)^{n+1}$ and let $i \in \{0, \dots, n\}$.
			We write $S \leq^i S'$ if and only if
			$$
			S_i = S'_i \quad \text{and} \quad g_i(S') \mid a_i \mid a_i'.
			$$
			We write $S <^i S'$ if and only if $S \leq^i S'$ and $S \neq S'$. Note that the relation $\leq^i$ is a partial order on $(\Nat^+)^{n+1}$.
		\end{itemize}
	\end{definition}

\subsection{$\Rat$-divisors}
This section establishes the notation and collects some results we will use when discussing $\Rat$-divisors. Whenever we discuss divisors and $\Rat$-divisors on a variety $X$, we assume implicitly that $X$ is normal. 
	
	\begin{definitions}  
        The group of \textit{$\Rat$-divisors}, denoted  $\Div(X, \Rat)$, is the group $\Div(X) \otimes_\Integ \Rat$.   A \textit{$\Rat$-divisor} $D$ is written as $D = \sum_{i \in I} a_i Y_i$ where each $Y_i$ is a prime divisor and $a_i \in \Rat$. If $D = \sum_{i \in I} a_i Y_i$ is a $\Rat$-divisor, then $\lfloor D \rfloor = \sum_{i \in I} \floor{a_i} Y_i$. Two $\Rat$-divisors $D$ and $D'$ are \textit{linearly equivalent} (we write $D \sim D'$) if there exists $f \in K(X)^*$ such that $D-D'$ is a principal divisor. Two $\Rat$-divisors are \textit{$\Rat$-linearly equivalent} (we write $D \sim_\Rat D'$) if there exists some $n \in \Nat^+$ such that $nD - nD'$ is a principal divisor. A $\Rat$-divisor $D = \sum_{i \in I} a_i Y_i$ is \textit{effective} if $a_i \geq 0$ for all $i$. It is \textit{reduced} if for all $i\in I$, either $a_i = 1$ or $a_i = 0$. The \textit{support} of a $\Rat$-divisor $D = \underset{i \in I}{\sum} a_i Y_i$ is $\Supp(D) = \underset{a_i \neq 0}{\bigcup} Y_i$.
		
		A \textit{$\Rat$-Cartier $\Rat$-divisor} is a $\Rat$-divisor $D \in \Div(X,\Rat)$ such that $nD$ is a Cartier divisor for some $n \in \Nat^+$. A \textit{$\Rat$-Cartier divisor} is an integral $\Rat$-Cartier $\Rat$-divisor. A variety $X$ is called \textit{$\Rat$-factorial} if every integral divisor on $X$ is $\Rat$-Cartier.
  
		Given $D \in  \Div(X,\Rat)$, the sheaf $\OSheaf_X(D)$ of $\OSheaf_X$-modules is defined by stipulating that if $U$ is a non-empty open subset of $X$ then
		$$
		\Gamma(U,\OSheaf_X(D)) = \{0\} \cup \setspec{ f \in K(X)^* }{ \div_U(f) + D|_U \ge 0 } .
		$$
		Note that if $D \in  \Div(X,\Rat)$, then by definition $\OSheaf_X(D)=\OSheaf_X(\lfloor D \rfloor)$.  
\end{definitions}
 \begin{notation}
\label{NormalCMCoincide}
A \textit{canonical divisor} on a normal variety $X$ of dimension $n$ is any integral divisor $K_X$ on $X$ such that the restriction of the sheaf $\OSheaf_X(K_X)$ to the regular locus $X_{\mathrm{reg}}$ of $X$ is isomorphic to the canonical sheaf $\omega_{X_{\mathrm{reg}}}=\bigwedge^n \Omega_{X_{\mathrm{reg}}/\mathbf{k}}$ of $X_{\mathrm{reg}}$. When $X$ is normal and Cohen-Macaulay, the dualizing and canonical sheaves of $X$ (denoted $\omega_X^o$ and $\omega_X$ respectively) are isomorphic and are also isomorphic to $\OSheaf_X(K_X)$ where $K_X$ is any canonical divisor of $X$. (See \cite[Remark 5.2]{KovacsStable}.)
 \end{notation}

        \begin{definitions} \label{ampleDefs} A Cartier divisor $D$ on a normal variety $X$ is called \textit{very ample} if $\OSheaf_X(D)$ is a very ample invertible sheaf; a $\Rat$-divisor $D$ on $X$ is \textit{ample} if there exists some $m \in \Nat^+$ such that $mD$ is a very ample Cartier divisor.  
        
        Assume in addition that $X$ is projective and let $D$ be a $\Rat$-Cartier $\Rat$-divisor. Then, $D$ is called \textit{nef} if $D \cdot C \geq 0$ for every irreducible curve $C$ in $X$. We say that $D$ is \textit{big} if $\Vol_X(D) = \limsup_{m \to \infty} \frac{h^0(X, \OSheaf_X(mD))}{m^d/d!} > 0$ and that $D$ is \textit{pseudoeffective} if it lies in the closure of the cone of effective divisors inside the N\'eron-Severi space $\NS(X) \otimes \Reals$.  Note that an ample divisor is pseudoeffective. 
\end{definitions}

Section \ref{Sec:Cotype0} also requires the following well-known theorem. 

\begin{theorem}[Nakai-Moishezon]\label{NakaiMoishezon}
   A $\Rat$-Cartier divisor $D$ on a normal projective surface $S$ is ample if and only if $D \cdot C > 0$ for every irreducible curve $C \in \Div(S)$.
\end{theorem}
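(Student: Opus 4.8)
The plan is to prove the two implications separately, the forward one being elementary and the converse being the substance of Nakai--Moishezon. Since $D$ is $\Rat$-Cartier, I would first replace it by a suitable positive multiple so as to assume $D\in\CaDiv(X)$; this preserves both ampleness (Definitions~\ref{ampleDefs}) and the hypothesis $D\cdot C>0$, and it makes $\OSheaf_X(D)$ invertible, so that intersection theory, Riemann--Roch and Serre duality are at our disposal on the normal --- hence Cohen--Macaulay, cf.\ Notation~\ref{NormalCMCoincide} --- projective surface $X$ (alternatively one passes to a resolution $\pi\colon\tilde X\to X$ and uses the projection formula, noting $\pi^{*}D\cdot\tilde C=0$ on $\pi$-exceptional curves and $\pi^{*}D\cdot\tilde C=(D\cdot C)\deg(\tilde C/C)$ otherwise). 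For ``$D$ ample $\Rightarrow$ $D\cdot C>0$'': some multiple $mD$ is very ample, so $\OSheaf_X(mD)|_{C}$ is a very ample invertible sheaf on the projective curve $C$, hence of positive degree, and $m(D\cdot C)=\deg_{C}\bigl(\OSheaf_X(mD)|_{C}\bigr)>0$.

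For the converse, assume $D\cdot C>0$ for every irreducible curve $C$. First, $D$ is nef, and a nef divisor on a surface satisfies $D^{2}\ge 0$ (for any ample $H$ the divisor $D+\tfrac1nH$ is ample, so $\bigl(D+\tfrac1nH\bigr)^{2}>0$, and letting $n\to\infty$ gives $D^{2}\ge 0$). The heart of the matter is to upgrade this to $D^{2}>0$; granting that, the rest is classical. By Riemann--Roch, $\chi\bigl(\OSheaf_X(nD)\bigr)=\chi(\OSheaf_X)+\tfrac12\bigl(n^{2}D^{2}-n\,D\cdot K_X\bigr)$ grows like $\tfrac12 n^{2}D^{2}\to\infty$, while $h^{2}\bigl(X,\OSheaf_X(nD)\bigr)=h^{0}\bigl(X,\OSheaf_X(K_X-nD)\bigr)=0$ for $n\gg 0$, since $(K_X-nD)\cdot H\to-\infty$ for a fixed ample $H$ (here $D\cdot H>0$ because an ample divisor is $\Rat$-linearly equivalent to an effective sum of curves). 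Hence $nD$ is linearly equivalent to an effective divisor for $n\gg 0$, so after passing to a multiple I may assume $D$ is effective and nonzero. Then $\OSheaf_X(D)|_{D}$ is ample on the curve $D$ (of positive degree $D\cdot C$ on each component $C$), so $H^{1}\bigl(D,\OSheaf_{D}(nD)\bigr)=0$ and $\OSheaf_{D}(nD)$ is globally generated for $n\gg 0$; feeding this into $0\to\OSheaf_X((n-1)D)\to\OSheaf_X(nD)\to\OSheaf_{D}(nD)\to 0$ shows $H^{1}\bigl(X,\OSheaf_X((n-1)D)\bigr)\to H^{1}\bigl(X,\OSheaf_X(nD)\bigr)$ is surjective for $n\gg 0$, so that $h^{1}\bigl(X,\OSheaf_X(nD)\bigr)$ is eventually constant and these maps are eventually isomorphisms, whence $H^{0}\bigl(X,\OSheaf_X(nD)\bigr)\to H^{0}\bigl(D,\OSheaf_{D}(nD)\bigr)$ is surjective for $n\gg 0$, and therefore $\OSheaf_X(nD)$ is globally generated on $X$ for $n\gg 0$ (its base locus lies in $\Supp(D)$, where the sections lifted from $D$ already generate). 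The associated morphism $X\to\PPP^{N}$ contracts no curve --- a contracted $C$ would have $n(D\cdot C)=0$ --- so it is finite, and $\OSheaf_X(nD)$, being the pullback of an ample sheaf along a finite morphism, is ample; thus $D$ is ample.

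The remaining task, which I expect to be the genuine obstacle, is to rule out $D^{2}=0$. If $D\cdot K_X<0$, Riemann--Roch already forces $h^{0}\bigl(X,\OSheaf_X(nD)\bigr)\to\infty$ (the term $-\tfrac12 n\,D\cdot K_X$ dominates, $h^{2}=0$ for $n\gg 0$ as above), so $nD\sim\Delta=\sum_{i}a_{i}C_{i}\ge 0$ for some $n$, and then $0=\Delta^{2}=\sum_{i}a_{i}(\Delta\cdot C_{i})$ with every $\Delta\cdot C_{i}\ge 0$ forces $\Delta\cdot C_{i}=0$, hence $D\cdot C_{i}=0$, contradicting the hypothesis. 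The delicate regime is $D^{2}=0$ with $D\cdot K_X\ge 0$: here one invokes that $D$ is semi-ample --- which holds for the surfaces to which we apply the criterion, since the del Pezzo surfaces and well-formed weighted hypersurfaces with quotient singularities occurring in our proof are log del Pezzo, hence Mori dream spaces on which every nef divisor is semi-ample --- so a sufficiently divisible $mD$ defines a morphism $\varphi\colon X\to Y$; as $D^{2}=0$ and $D\not\equiv 0$ (because $D\cdot H>0$) we get $\dim Y=1$, and then $\varphi$ contracts its general fibre, a curve $C$ with $D\cdot C=0$ --- again a contradiction. Hence $D^{2}>0$ and the argument above concludes; I anticipate that securing semi-ampleness in this last case is the only step not reducible to routine surface geometry.
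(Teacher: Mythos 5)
The paper itself gives no proof of Theorem \ref{NakaiMoishezon}; it is recalled as a classical fact, so your attempt can only be measured against the standard proof of the Nakai--Moishezon criterion. Your forward implication and your treatment of the case $D^{2}>0$ do follow that standard route (pass to a Cartier multiple, kill $h^{2}$ by duality, produce an effective multiple by Riemann--Roch, restrict to it to get global generation, conclude via a finite morphism to projective space), and, modulo the usual care with Riemann--Roch and Serre duality on a singular normal surface that you yourself flag (and which can be handled through a resolution), that part is sound.

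The genuine gap is exactly where you anticipated it: the case $D^{2}=0$, $D\cdot K_X\ge 0$. Your way out --- invoking semi-ampleness because ``the surfaces to which we apply the criterion'' are log del Pezzo, hence Mori dream spaces --- does not prove the theorem as stated, which concerns an arbitrary normal projective surface; it silently replaces the statement by a weaker one tailored to the paper's applications. Worse, this case cannot be closed in general: by Mumford's example (see Example 1.5.2 in \cite{lazarsfeld2004positivity1}) there is a smooth projective surface, ruled over a curve of genus at least $2$, carrying an integral Cartier divisor $D$ with $D\cdot C>0$ for every irreducible curve $C$ but $D^{2}=0$; such a $D$ is nef, not big, and not ample. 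So the statement as recorded here is missing the hypothesis $D^{2}>0$: the correct classical criterion on surfaces reads that $D$ is ample if and only if $D^{2}>0$ and $D\cdot C>0$ for every irreducible curve $C$ (in Kleiman's language, positivity on every curve class need not give positivity on the closure of the cone of curves). This does not affect the paper, since in every application --- e.g.\ Propositions \ref{prop:ShatdelPezzo} and \ref{SDotDelPezzo} --- the self-intersection of the divisor in question is computed and is positive, so only the correct statement is ever needed. The honest repair of your proof is therefore to add $D^{2}>0$ to the hypotheses, after which your argument goes through, rather than to import semi-ampleness valid only for the special surfaces of Sections \ref{Sec:Cotype0} and \ref{difficult}.
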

\subsection{Quasismooth Weighted Complete Intersections}

We collect known results on quasismooth weighted complete intersections in weighted projective spaces. Although not strictly required, in order to be consistent with the our definition of ``variety" at the start of the article, we assume that the field $\bk$ below is algebraically closed. 
 
\begin{nothing}
Let $R = \bk_{w_0, \dots, w_n}[X_0, \dots, X_n]$ denote the graded polynomial ring where $n \geq 1$ and $\deg(X_i) = w_i$ for each $i = 0, \dots, n$. The weighted projective space $\PPP = \PPP(w_0, \dots , w_n) = \Proj(R)$ is said to be \textit{well-formed} if for each $i = 0,\dots,n$, we have $\gcd(w_0,\dots,w_{i-1},\hat{w_i},w_{i+1}, \dots , w_n) = 1$. Every weighted projective space is a projective variety and is isomorphic to a well-formed weighted projective space. Every weighted projective space is normal and Cohen-Macaulay (see \cite[Theorem 3.1A(c)]{beltramettiRobbiano}).

A \textit{weighted projective variety} $X$ is a closed subvariety of a weighted projective space. Whenever we write ``the variety $X \subseteq \PPP$", we mean that $X$ is a closed subvariety of $\PPP$. Since $\PPP(w_0, \dots , w_n)$ is a projective variety, every weighted projective variety is also a projective variety. A weighted projective variety $X \subseteq \PPP$ is \textit{well-formed} if both $\PPP$ is well-formed and $\codim_X(X \cap \Sing(\PPP)) \geq 2$. 

Let $I$ be a homogeneous prime ideal of the graded ring $R = \bk_{w_0,\dots,w_n}[X_0, \dots, X_n]$ and consider the closed subvariety $X_I = V_+(I)$ of $\PPP=\PPP(w_0,\dots,w_n) = \Proj(R)$.  Note that $X$ is isomorphic to $\Proj( R/I )$. If $f_1, \dots, f_k$ are homogeneous elements of $R$, we abbreviate $X_{\lb f_1, \dots, f_k \rb}$ by $X_{f_1, \dots, f_k}$. If $I$ is generated by a regular sequence $(f_1, \dots , f_k)$ of homogeneous elements of $S$ of respective degrees  $d_i = \deg(f_i)$ where $i = 1, \dots , k$, then $X_I$ is called a \textit{weighted complete intersection} of \textit{multidegree $(d_1, \dots, d_k)$}. In particular, if $k=1$, $f_1 = f$ and $d_1 = d$, we will say that $X_I = X_f$ is a \textit{weighted hypersurface} of \textit{degree $d$}. The closed subset $C_X = V(I) \subseteq \aff^{n+1}$ is called the  \textit{affine cone over $X$}; note that $C_X$ passes through the origin of $\aff^{n+1}$ and $C_X \isom \Spec( R/I )$ is an integral affine scheme.	The variety $X$ is called \textit{quasismooth} if $C_X$ is nonsingular away from the origin. When $X$ is well-formed and quasismooth, we have $\Sing(X) = X \cap \Sing(\PPP)$ (see \cite[p.185 and Proposition 8]{Dimca1986}).
  \end{nothing}

\begin{assumption}
    From this point onward, whenever we consider a quasismooth weighted complete intersection $X \subset \PPP$, we assume that $X$ is not contained in a hyperplane of $\PPP$. 
\end{assumption}
\begin{nothing} \label{cyclicQuotientSingularitiesPBNothing}
We collect some known results on well-formed quasismooth weighted complete intersections. Recall \cite[$\S 5$]{kollar_mori_1998} that a variety $X$ has  \textit{rational singularities} if for every resolution of singularities  $f : \tilde{X} \to X$  one has  $R^i f_* \OSheaf_{\tilde{X}} = 0$ for all $i > 0$. This condition is known to be equivalent to the property that $X$ is Cohen-Macaulay and $f_* \omega_{\tilde{X}} = \omega_X$.

    Let $\PPP = \PPP(w_0, \dots, w_n)$ be a well-formed weighted projective space. If $X \subseteq \PPP$ is a well-formed quasismooth weighted complete intersection then $X$ has at most cyclic quotient \cite[p.105]{iano-fletcher_2000} and hence rational singularities \cite[Corollary 7.4.10]{ishii2014introduction}; it follows that $X$ is Cohen-Macaulay. Since the quotient of a normal variety is normal, it follows that $X$ is normal. Finally, $X$ is $\Rat$-factorial by \cite[Proposition 5.15]{kollar_mori_1998}. 
\end{nothing}

We recall that if $X \subseteq \PPP(w_0, \dots, w_n)$ is a weighted complete intersection of multidegree $(d_1, \dots, d_k)$, then the \textit{amplitude} of $X$ is denoted by $\alpha = \sum_{i=1}^k d_i - \sum_{i=0}^n w_n$. A key property of well-formed quasismooth weighted complete intersections is that the adjunction formula holds for them, in the following form:  

\begin{theorem}\cite[Theorem 3.3.4] {dolgachev}\label{dualizing}
	Let $X \subseteq \PPP(w_0, \dots, w_n)$ be a well-formed quasismooth weighted complete intersection. Then $\omega_X \isom \omega_X^\circ \isom \OSheaf_X(\alpha)$ where $\alpha$ is the amplitude of $X$.  
\end{theorem}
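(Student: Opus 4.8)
The plan is to prove the adjunction/dualizing formula $\omega_X \isom \omega_X^\circ \isom \OSheaf_X(\alpha)$ for a well-formed quasismooth weighted complete intersection $X \subseteq \PPP = \PPP(w_0,\dots,w_n)$ by reducing it to the standard computation on the ambient weighted projective space plus an analysis of what happens over the singular locus. First I would recall from Proposition \ref{cyclicQuotientSingularitiesPB} that $X$ is normal and Cohen-Macaulay, so by Notation \ref{NormalCMCoincide} the three sheaves $\omega_X$, $\omega_X^\circ$ and $\OSheaf_X(K_X)$ all agree for any choice of canonical (Weil) divisor $K_X$; thus it suffices to produce an isomorphism $\OSheaf_X(K_X) \isom \OSheaf_X(\alpha)$. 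Since $X$ is normal, such an isomorphism can be checked on the regular locus $X_{\mathrm{reg}}$: both sides are reflexive (indeed $\OSheaf_X(D)$ is reflexive for any Weil divisor $D$ on a normal variety), and a morphism of reflexive sheaves on a normal variety that is an isomorphism in codimension one is an isomorphism. Here the well-formedness hypothesis is exactly what guarantees that $X \cap \Sing(\PPP)$ has codimension $\ge 2$ in $X$, and by Proposition \ref{DimcaCodim2} this set is precisely $\Sing(X)$; hence $X_{\mathrm{reg}} = X \setminus (X \cap \Sing(\PPP))$ is the complement of a closed set of codimension $\ge 2$.

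Next I would work on $X_{\mathrm{reg}}$, which lies in the smooth locus $\PPP_{\mathrm{reg}} = \PPP \setminus \Sing(\PPP)$ of the ambient space. On $\PPP_{\mathrm{reg}}$ one has the standard fact, going back to the Euler sequence for weighted projective space, that $\omega_{\PPP_{\mathrm{reg}}} \isom \OSheaf_{\PPP}(-\sum_{i=0}^n w_i)|_{\PPP_{\mathrm{reg}}}$; this is where well-formedness of $\PPP$ is used, so that $\OSheaf_{\PPP}(m)$ behaves well and the grading is faithful. Since $X$ is a quasismooth weighted complete intersection cut out by a regular sequence $(f_1,\dots,f_k)$ of homogeneous elements of degrees $d_1,\dots,d_k$, the affine cone $C_X$ is a complete intersection in $\aff^{n+1}$ that is smooth away from the origin, and so $X_{\mathrm{reg}}$ is cut out in $\PPP_{\mathrm{reg}}$ by a regular sequence whose zero schemes are smooth; the conormal sheaf of $X_{\mathrm{reg}}$ in $\PPP_{\mathrm{reg}}$ is then locally free, isomorphic to $\bigoplus_{i=1}^k \OSheaf_X(-d_i)|_{X_{\mathrm{reg}}}$. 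Applying the adjunction formula $\omega_{X_{\mathrm{reg}}} \isom \omega_{\PPP_{\mathrm{reg}}}|_{X_{\mathrm{reg}}} \otimes \bigwedge^k N_{X_{\mathrm{reg}}/\PPP_{\mathrm{reg}}}$ yields $\omega_{X_{\mathrm{reg}}} \isom \OSheaf_X\!\big(\sum_{i=1}^k d_i - \sum_{i=0}^n w_i\big)|_{X_{\mathrm{reg}}} = \OSheaf_X(\alpha)|_{X_{\mathrm{reg}}}$.

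Finally I would extend this isomorphism over all of $X$. By Notation \ref{NormalCMCoincide}, a canonical divisor $K_X$ is by definition any integral divisor with $\OSheaf_X(K_X)|_{X_{\mathrm{reg}}} \isom \omega_{X_{\mathrm{reg}}}$, which by the previous step is $\isom \OSheaf_X(\alpha)|_{X_{\mathrm{reg}}}$; since $\alpha$ corresponds to the Weil divisor class given by the $\alpha$-th twist (and $X$ is $\Rat$-factorial and normal so these twisting sheaves are reflexive of rank one), both $\OSheaf_X(K_X)$ and $\OSheaf_X(\alpha)$ are reflexive sheaves on the normal variety $X$ which agree on the big open set $X_{\mathrm{reg}}$, hence agree everywhere. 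Combined with the Cohen-Macaulay identification $\omega_X \isom \omega_X^\circ \isom \OSheaf_X(K_X)$ this gives the claim. The main obstacle I anticipate is the bookkeeping around the sheaves $\OSheaf_{\PPP}(m)$ on a weighted (as opposed to ordinary) projective space: these need not be invertible on all of $\PPP$, only on $\PPP_{\mathrm{reg}}$, and one must be careful that the adjunction computation is genuinely carried out on the smooth loci where everything is locally free, and then transported back by reflexivity using the codimension-$\ge 2$ condition that well-formedness provides. A secondary point requiring care is checking that the conormal sheaf really is $\bigoplus \OSheaf_X(-d_i)$ restricted to $X_{\mathrm{reg}}$ rather than merely having the right determinant — quasismoothness of $X$ (smoothness of $C_X$ off the origin) is exactly what makes $(f_1,\dots,f_k)$ a regular sequence with smooth vanishing loci on $\PPP_{\mathrm{reg}}$, so this is where that hypothesis enters essentially.
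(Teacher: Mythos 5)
Your proposal is essentially a correct, self-contained argument, but it takes a genuinely different route from the paper. The paper's proof is very short: it invokes Proposition \ref{cyclicQuotientSingularitiesPB} (normality and Cohen-Macaulayness) to identify $\omega_X$ with $\omega_X^\circ$ via Notation \ref{NormalCMCoincide}, and then simply cites Theorem 3.3.4 of Dolgachev's \emph{Weighted Projective Varieties} for the isomorphism $\omega_X^\circ \isom \OSheaf_X(\alpha)$. You, by contrast, reconstruct the content of that cited theorem from scratch: restrict to $X_{\mathrm{reg}}$, which by well-formedness and Proposition \ref{DimcaCodim2} is the complement of a codimension-$\ge 2$ locus and lies inside $\PPP_{\mathrm{reg}}$; compute $\omega_{\PPP_{\mathrm{reg}}}$ via the Euler sequence; identify the conormal sheaf of $X_{\mathrm{reg}}$ in $\PPP_{\mathrm{reg}}$ as $\bigoplus \OSheaf_X(-d_i)$ using quasismoothness; apply adjunction on the regular locus; and finally extend by the fact that rank-one reflexive sheaves on a normal variety that agree in codimension one agree everywhere. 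The steps are all sound and you correctly flag the places where each hypothesis (well-formedness of $\PPP$, well-formedness of $X$, quasismoothness) is used. What the paper's route buys is brevity and the offloading of the delicate bookkeeping with the twisting sheaves $\OSheaf_\PPP(m)$ (which are not invertible globally on a weighted projective space) to a reference. What your route buys is transparency about exactly which hypothesis does what work — in particular, that well-formedness of $X$ is the codimension-$\ge 2$ condition that lets the adjunction computation on $X_{\mathrm{reg}}$ propagate to all of $X$ by reflexivity, and that quasismoothness is what makes the conormal computation valid. Both arrive at the same conclusion; for the purposes of the paper, which uses the result as a black box, the citation suffices, and your argument could serve as a proof of the cited theorem in the special case at hand.
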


\subsection{Singularities of surfaces, log pairs and singular del Pezzo surfaces}

We briefly recall some basic facts about singularities of log pairs. We refer the reader to \cite[Chapter 2]{kollar_mori_1998} for details. 
\begin{definitions}
A \textit{log pair} $(S,D)$ consists of a normal variety $S$ and an effective $\mathbb{Q}$-divisor $D=\sum_{i=1}^r a_iC_i$ such that the divisor $K_S+D$ is $\mathbb{Q}$-Cartier. Let $\pi:\tilde{S} \to S$ be a proper birational morphism. The morphism $\pi$ is called a \textit{log resolution} if $\tilde{S}$ is smooth, the exceptional locus of $\pi$ (denoted ${\rm Ex}(\pi)$) is a divisor and ${\rm Ex}(\pi) \cup \pi^{-1}(\Supp(D))$ is an SNC divisor on $\tilde{S}$. Denoting by $\tilde{C}_i$ the proper transform of $C_i$ and by $E_1, \dots, E_n$ the $\pi$-exceptional curves, we have 
        $$K_{\tilde{S}}  + \sum_{i = 1}^r a_i \tilde{C}_i  \sim_\Rat \pi^*(K_S + D)+\sum_{j=1}^n b_j E_j$$
        for some rational numbers $b_1, \dots, b_n$.

A log pair $(S,D)$ is  called  \textit{canonical}, (resp. \textit{purely log-terminal}),  (resp.  \textit{log-canonical}) at a point $p$ if $a_i\leq 1$ for every $i$ such that $p\in C_i$ and for every log resolution $\pi:\tilde{S}\to S$ and every $j \in \{ 1, \dots, n\}$ such that $\pi(E_j)=p$, the coefficients $b_j$ satisfy     
$b_j\geq 0$ (resp. $b_j> -1$), (resp. $b_j\geq -1$). Given a log resolution $\pi$, the coefficient $b_j$  is called the \textit{discrepancy} of the exceptional curve $E_j$.

        The pair $(S,D)$ is call \textit{canonical}, (resp. \textit{purely log-terminal}), (resp. \textit{log-canonical}) if it is so at every point $p \in S$. A pair $(S,D)$ is called \textit{Kawamata log-terminal} (klt) if it is purely log-terminal and $\lfloor D \rfloor=0$. A normal variety $S$ is said to have \textit{canonical} (resp. klt) singularities if if the log pair $(S,0)$ is canonical  (resp. klt). 
These notions are known to be independent 
the choice of log resolution and can therefore be verified on the minimal log resolution of the log pair $(S,D)$. See \cite[Corollary 2.13]{kollar_2013}. 
\end{definitions}

In the special case where $S$ is a surface, it is known that klt singularities are finite quotient singularities and that among these, canonical singularities are exactly the \textit{Du Val ADE singularities}. 

\begin{example}\cite[p.286]{hacking2017flipping}\label{Cyclicdiscrepancy}
    Let $S$ be a normal projective surface with $k$ singular points $P_1, \dots, P_k$. Assume each $P_i$ is a cyclic quotient singularity of type $\frac{1}{n_i}(1,1)$ where $n_i \geq 2$ and let $\pi : \tilde{S} \to S$ be the minimal resolution of singularities of $S$. Then $E_i=\pi^{-1}(P_i)$ is isomorphic to $\mathbb{P}^1$ and has self-intersection number $-n_i$ in $\tilde{S}$. Moreover, in the ramification formula 
    $$K_{\tilde{S}} = \pi^*(K_S) + \sum_{i=1}^k b_i E_i$$ for $\pi$, $b_i = -1 + \frac{2}{n_i}$ for each $i = 1, \dots, k$. In particular, $P_i$ is a klt singularity which is canonical if and only if $n_i=2$.
\end{example}

\begin{definition}
 A \textit{del Pezzo surface}  $S$ is a normal projective surface over $\bk$ with at most quotient singularities such that $-K_S$ is an ample 
 $\Rat$-Cartier divisor. The \textit{degree} of a 
 del Pezzo surface is the self-intersection number of its canonical divisor. 
\end{definition}

We also require the following lemma, used in Section \ref{subsub23530}.

\begin{lemma}$($\cite[Lemma A.3.]{cheltsov2020cylinders}$)$\label{lem:multlc}
		Let $S$ be a normal surface with at most quotient singularities and let $D$ be an effective non-zero $\Rat$-divisor on $S$. Let $p$ be a regular point of $S$. If $(S,D)$ is not log-canonical at $p$, then $\mult_p(D) > 1$.		
\end{lemma}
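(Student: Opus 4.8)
The plan is to prove the contrapositive: if $\mult_p(D)\le 1$, then $(S,D)$ is log-canonical at $p$. Since being log-canonical at $p$ depends only on an open neighbourhood of $p$ and $p$ is a regular point, we may shrink $S$ and assume $S$ is a smooth surface. Write $D=\sum_i a_iC_i$ with each $a_i>0$. As $p$ is smooth, $\mult_p(C_i)\ge 1$ for every $i$ with $p\in C_i$, so $\sum_{p\in C_i}a_i\le\sum_{p\in C_i}a_i\mult_p(C_i)=\mult_p(D)\le 1$; in particular $a_i\le 1$ whenever $p\in C_i$, which is the first requirement in the definition of log-canonicity at $p$. It therefore remains to check the discrepancy condition, which (in view of the independence of the choice of log resolution recalled above) amounts to showing that $a(F;S,D)\ge -1$ for every divisorial valuation $F$ whose centre on $S$ is the point $p$.

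I would prove this by induction on the number $k$ of point blow-ups needed to realise such an $F$ (recall that on a smooth surface every divisorial valuation with centre a closed point $p$ is realised by a finite sequence of point blow-ups beginning with the blow-up of $p$; the statement should be set up uniformly over all triples consisting of a smooth surface, an effective $\Rat$-divisor of multiplicity $\le 1$ at the chosen point, and that point). Let $\pi:\tilde S\to S$ be the blow-up of $p$, $E$ its exceptional curve, and $\tilde D$ the strict transform of $D$, and put $m=\mult_p(D)\le 1$. From $\pi^{*}D=\tilde D+mE$ and
\[
K_{\tilde S}+\tilde D=\pi^{*}(K_S+D)+(1-m)E
\]
we read off $a(E;S,D)=1-m\ge 0$, which settles the case $k=1$ (where $F=E$). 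If $k\ge 2$, then the centre of $F$ on $\tilde S$ is a point $q\in E$. Intersecting $\pi^{*}D=\tilde D+mE$ with $E$ and using $E^{2}=-1$ and $\pi^{*}D\cdot E=0$ gives $\tilde D\cdot E=m$; since $E$ is smooth and a local intersection multiplicity dominates the product of the local multiplicities of the two divisors, $\sum_{q'\in E}\mult_{q'}(\tilde D)\le m\le 1$, so in particular $\mult_q(\tilde D)\le 1$. Because $1-m\ge 0$, subtracting the effective divisor $(1-m)E$ from the boundary only increases discrepancies, so $a(F;S,D)=a\bigl(F;\tilde S,\tilde D-(1-m)E\bigr)\ge a(F;\tilde S,\tilde D)$. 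The pair $(\tilde S,\tilde D)$ again has a smooth ambient surface and an effective divisor of multiplicity $\le 1$ at $q$, and $F$ is realised over $q$ by $k-1$ blow-ups, so the induction hypothesis yields $a(F;\tilde S,\tilde D)\ge -1$, hence $a(F;S,D)\ge -1$. Together with the coefficient bound of the first paragraph this shows $(S,D)$ is log-canonical at $p$.

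The step demanding the most care is the passage across the first blow-up: for $m<1$ the natural boundary $\tilde D-(1-m)E$ on $\tilde S$ fails to be effective, so one cannot feed it to the induction hypothesis directly, and one must instead compare with the effective pair $(\tilde S,\tilde D)$ via the monotonicity of discrepancies under subtracting an effective divisor; the intersection identity $\tilde D\cdot E=m$ is what propagates the hypothesis $\mult\le 1$ through the blow-up, and one must also confirm that the induction parameter genuinely drops over each point of $E$. Equivalently, the assertion is a form of the standard inequality $\lct_p(S,D)\ge 1/\mult_p(D)$ at a smooth point, for which one may alternatively appeal to the general theory of singularities of pairs (see e.g. \cite{kollar_mori_1998}); we have chosen to include the short self-contained argument above.
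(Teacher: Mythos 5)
The paper does not supply a proof of this lemma: it is cited verbatim as Lemma~A.3 of \cite{cheltsov2020cylinders}, so there is no internal argument to compare against. Your self-contained proof is correct, and it is in fact the standard one (it is essentially the argument that $\lct_p(S,D)\ge 1/\mult_p(D)$ at a smooth point, cf.~\cite[\S 2.3]{kollar_mori_1998}). The key ingredients are all used properly: the coefficient bound $a_i\le 1$ for components through $p$ falls out of $\sum a_i\mult_p(C_i)\le\mult_p(D)\le 1$; the base case of the induction is $a(E;S,D)=1-\mult_p(D)\ge 0$; the inductive step correctly uses $\tilde D\cdot E=\mult_p(D)$ together with the smooth-curve inequality $(\tilde D\cdot E)_{q'}\ge\mult_{q'}(\tilde D)$ to propagate the hypothesis $\mult\le 1$ to any infinitely near point $q\in E$; and the passage from the crepant boundary $\tilde D-(1-m)E$ (possibly non-effective) to the effective pair $(\tilde S,\tilde D)$ via monotonicity of discrepancies under decreasing the boundary is exactly the right way to keep the induction honest. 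One could shorten the presentation by observing that, after the first blow-up, $a(F;S,D)=a(F;\tilde S,\tilde D)+(1-m)\,\mult_F(E)\ge a(F;\tilde S,\tilde D)$ since $\mult_F(E)\ge 0$, but your phrasing via monotonicity is equivalent and equally valid.
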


\subsection{Cylinders and Polar Cylinders}\label{subsec:cylinders}

\begin{definition}
		A scheme $U$ is a \textit{cylinder} if $U \isom C \times {\aff^1}$ for some affine scheme $C$. A scheme $X$ \textit{contains a cylinder} if there exists a non-empty open set $U \subseteq X$ such that $U$ is a cylinder. Note that $U$ is affine. 
\end{definition}

\begin{definition}\label{def:HPolarCylinder}		
		Let $H$ be a $\Rat$-divisor on a projective normal variety $X$ over $\bk$. Let $U \subseteq X$ be a cylinder of $X$. The cylinder $U \subseteq X$ is called \textit{$H$-polar}  if $U = X \setminus \Supp(D)$ for some effective $\Rat$-Cartier $\Rat$-divisor $D \in \Div(X, \Rat)$ such that $D \sim_\Rat H$. In the special case where $H \sim -K_X$, any $H$-polar cylinder $U$ is called an \textit{anti-canonical polar cylinder}.  
\end{definition}

\begin{remark}\label{HH'polar}
    Let $H$ and $H'$ be $\Rat$-divisors on a projective normal variety $X$.  Assume that there exist $q,q' \in \Rat^+$ such that $qH \sim q'H'$. Then a cylinder $U$ is $H$-polar if and only if it is $H'$-polar.
\end{remark}

\subsubsection*{Demazure's Construction and Polar Cylinders}

\begin{definition}
	Let $B = \bigoplus_{i \in \Nat} B_i$ be an $\Nat$-graded integral domain. An element $\xi \in \Frac B$ is \textit{homogeneous} if $\xi = \frac{a}{b}$ for some homogeneous elements $a,b \in B$ with $b \neq 0$. If $\xi$ is homogeneous, then its \textit{degree} is $\deg(\xi) = \deg(a)-\deg(b)$. 
\end{definition}

The following is a special case of the Theorem below Section 3.5 in \cite{Demazure}. 
\begin{theorem}\label{Demazure}
	Let $B =\bigoplus_{n \in \Nat} B_n$ be an $\Nat$-graded normal domain that is finitely generated over $\bk$ and such that $e(B) = 1$ and $\height(B_+) > 1$.  Let $X = \Proj B$. Then, there exists a homogeneous element $T$ of $\Frac B$ of degree 1, and for each such $T$, there exists a unique $\Rat$-divisor $H$ of $X$ such that $B_n = \text{\rm H}^0(X,\OSheaf_X(nH)) T^n$ for all $n \in \Nat$. Moreover, $H$ is ample and $\OSheaf_X(n) \isom \OSheaf_X(nH)$ for all $n \in \Integ$. 	
\end{theorem}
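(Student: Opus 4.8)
The plan is to recover $B$, via a direct valuative analysis of its homogeneous height-one primes, as the total section ring of a suitable ample $\Rat$-divisor $H$ on $X = \Proj B$.

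First I would fix the degree-$1$ coordinate. Since $B$ is an $\Nat$-graded domain, $\Frac B$ is a $\Integ$-graded field whose degree-$1$ part is nonzero precisely because $e(B) = 1$; fix a homogeneous $T \in \Frac B$ of degree $1$. Writing $K := (\Frac B)_0$, which is the function field $K(X)$, one gets $\Frac B = K[T, T^{-1}]$, hence for each $n \in \Nat$ a $\bk$-linear identification $B_n = L_n T^n$ with $L_n := B_n T^{-n} \subseteq K$; here one uses $B_0 = H^0(X, \OSheaf_X) = \bk$, which follows from $B$ being normal (hence $S_2$) together with $\height(B_+) > 1$. The theorem thus reduces to producing an $H$ on $X$ with $L_n = H^0(X, \OSheaf_X(nH))$ for all $n$.

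Next I would construct $H$. Because $\height(B_+) > 1$, the irrelevant ideal $B_+$ is not a height-one prime, so the homogeneous height-one primes of $B$ are exactly the ideals $\mathfrak{p}_Z$ of the affine cones $\tilde Z \subseteq \Spec B$ over the prime divisors $Z$ of $X$. Each $B_{\mathfrak{p}_Z}$ is a discrete valuation ring, with valuation $\mathrm{ord}_{\tilde Z}$, and $\mathrm{ord}_{\tilde Z}|_{K^*} = e_Z\cdot \mathrm{ord}_Z$ for a unique integer $e_Z \geq 1$. Normality gives $B = \bigcap_{\height \mathfrak{q} = 1} B_\mathfrak{q}$, and for a \emph{homogeneous} element it is enough to test the homogeneous height-one primes, so for $f \in K^*$ and $n \in \Nat$ one has $fT^n \in B$ if and only if $e_Z\,\mathrm{ord}_Z(f) + n\,\mathrm{ord}_{\tilde Z}(T) \geq 0$ for every prime divisor $Z$ of $X$. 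Setting
$$ H := \sum_{Z} \frac{\mathrm{ord}_{\tilde Z}(T)}{e_Z}\,Z $$
(a finite sum, since $T$ is a unit of $B_{\mathfrak{p}_Z}$ for all but finitely many $Z$) and using $\mathrm{ord}_Z(f) \in \Integ$, the condition becomes $\mathrm{ord}_Z(f) \geq -\lfloor n\,\mathrm{ord}_{\tilde Z}(T)/e_Z\rfloor$ for all $Z$, i.e. $\div_X(f) + \lfloor nH\rfloor \geq 0$. Hence $B_n = H^0(X, \OSheaf_X(nH))\,T^n$ for all $n \in \Nat$.

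It then remains to check ampleness of $H$, the sheaf isomorphisms, and uniqueness. Since $B$ is $\bk$-affine, some Veronese subring $B^{(d)}$ is generated in degree $1$, so $X = \Proj B^{(d)}$ is projectively embedded by the invertible sheaf $\OSheaf_X(d)$; repeating the bookkeeping of the previous step on the affine charts $D_+(g)$ yields, for every $n \in \Integ$, an isomorphism $\OSheaf_X(n) \isom \OSheaf_X(nH)$ given by multiplication by $T^{-n}$, so in particular $dH$ is a very ample Cartier divisor and $H$ is an ample $\Rat$-divisor. For uniqueness with $T$ fixed: two solutions $H, H'$ give $H^0(X, \OSheaf_X(nH)) = H^0(X, \OSheaf_X(nH'))$ inside $K$ for all $n$, and taking $n$ large and divisible (so that $\lfloor nH\rfloor$ and $\lfloor nH'\rfloor$ are base-point-free) forces $\lfloor nH\rfloor = \lfloor nH'\rfloor$; dividing by $n$ and letting $n\to\infty$ gives $H = H'$. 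I expect the crux to be the normality bookkeeping of the third paragraph — identifying $B$ with the intersection of the $B_{\mathfrak{p}_Z}$ and pinning down that the homogeneous height-one primes are exactly the cones over divisors of $X$ — since this is precisely where the hypotheses $e(B) = 1$ (to make $T$ available) and $\height(B_+) > 1$ (to exclude $B_+$) enter; everything after it is essentially formal.
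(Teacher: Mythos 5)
The paper does not actually prove this statement: it is cited as a special case of the theorem in Section 3.5 of Demazure's article, and the machinery of \ref{Pkcnbvc9w3eidjojf0q9w} (homogeneous height-one primes, the ramification indices $e_{\pgoth}$, and the map $D\mapsto D^*$) is exactly the valuative framework your argument reconstructs. So your approach is in substance the same as Demazure's and coheres with how the result is used elsewhere in the paper. The construction of $H$, the reduction to homogeneous height-one primes (which is valid because the denominator ideal $B:_B b$ of a homogeneous $b\in\Frac B$ is homogeneous and nonzero, so any height-one prime containing it has nonzero, hence height-one, homogenization), the derivation $B_n = H^0(X,\OSheaf_X(nH))T^n$, and the ampleness argument via a Veronese $B^{(d)}$ generated in degree one are all correct. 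One small aside: you do not in fact need $B_0=\bk$ to get $\Frac B = K[T,T^{-1}]$; this holds for any $\Integ$-graded field with a nonzero element of degree $1$, so the remark about $S_2$ is dispensable at that step.

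There is, however, a genuine gap in the uniqueness step. You assert that for $n$ large and divisible ``$\lfloor nH\rfloor$ \emph{and} $\lfloor nH'\rfloor$ are base-point-free,'' but the hypothesis on $H'$ is only that $B_n = H^0(X,\OSheaf_X(nH'))T^n$ for all $n$; no positivity of $H'$ is given a priori, so you cannot invoke semi-ampleness of $H'$. The correct route is one-sided and uses only the positivity you have already established for $H$: take $n$ divisible enough that $nH=\lfloor nH\rfloor$ is very ample with $H^1(X,\OSheaf_X(nH))=0$. Base-point-freeness of $nH$ gives, for each prime divisor $Z$, a section $f\in B_nT^{-n}$ with $v_Z(f)=-na_Z$; since $f\in H^0(X,\OSheaf_X(\lfloor nH'\rfloor))$ this forces $\lfloor na_Z'\rfloor\ge na_Z$, i.e.\ $\lfloor nH'\rfloor\ge nH$. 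If the inequality were strict at some $Z$, the inclusion $\OSheaf_X(nH)\hookrightarrow\OSheaf_X(\lfloor nH'\rfloor)$ together with $H^1(X,\OSheaf_X(nH))=0$ and $H^0(Z,\OSheaf_Z(nH+Z))\ne 0$ would give $h^0(\lfloor nH'\rfloor)>h^0(nH)$, contradicting the hypothesis that both equal $\dim_{\bk}B_n$. Hence $\lfloor nH'\rfloor=nH$ for all such $n$, and letting $n\to\infty$ along such values gives $H'=H$. With this adjustment the proof is complete.
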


\begin{caution}
	In Theorem \ref{Demazure}, $nH$ is a $\Rat$-divisor of $X$, so $\OSheaf_X( nH )$ is an abbreviation for $\OSheaf_X( \lfloor nH \rfloor )$
	by definition. Thus, Theorem \ref{Demazure} asserts that
	$\OSheaf_X(n) \isom \OSheaf_X( \lfloor nH \rfloor)$ and $B_n = {\rm {H}}^0( X, \OSheaf_X(\lfloor nH \rfloor)) T^n$  for all $n \in \Nat$.
\end{caution}

\begin{nothing}(\cite[Sec. 5.4]{ChitayatDaigleCylindricity})\label{Pkcnbvc9w3eidjojf0q9w} Let $B$ be an $\Nat$-graded Noetherian normal domain such that the prime ideal $B_+ = \underset{i>0}{\bigoplus} B_i$ has height greater than $1$.
	Let ${\Omega} = \Spec B$ and $X = \Proj B$. We shall now define a $\Rat$-linear map $D \mapsto D^*$ from $\Div(X, \Rat)$ to $\Div({\Omega},\Rat)$.
	
	Let $K({\Omega})$ and $K(X)$ be the function fields of ${\Omega}$ and $X$ respectively. Let $X^{(1)}$ be the set of homogeneous prime ideals of $B$ of height $1$. Since $\height{B_+}>1$, $X^{(1)} = \setspec{ x \in X }{ \dim\OSheaf_{X,x}=1 }$.
	For each $\pgoth \in X^{(1)}$, $B_\pgoth \supset B_{(\pgoth)}$ is an extension of discrete valuations rings; let $e_\pgoth$ denote the ramification index of this extension.	Then $e_\pgoth \in \Nat\setminus \{0\}$. If $v^X_\pgoth : K(X)^* \to \Integ$ and 
	$v^{\Omega}_\pgoth : K({\Omega})^* \to \Integ$ denote the normalized\footnote{The word ``normalized'' means that the maps $v^X_\pgoth$ and $v^{\Omega}_\pgoth$ are surjective.}
	valuations of $B_{(\pgoth)}$ and $B_\pgoth$ respectively,
	then $v^{\Omega}_\pgoth (\xi) = e_\pgoth v^X_\pgoth(\xi)$ for all $\xi \in K(X)^*$.
	Let $C_\pgoth^X$ (resp.~$C_\pgoth^{\Omega}$) denote the closure of $\{ \pgoth \}$ in $X$ (resp.\ in ${\Omega}$).
	Then $C_\pgoth^X$ (resp.~$C_\pgoth^{\Omega}$) is a prime divisor of $X$ (resp.\ of ${\Omega}$),
	and every prime divisor of $X$ is a $C^X_\pgoth$ for some $\pgoth \in X^{(1)}$. We define $( C_\pgoth^X )^* = e_\pgoth C_\pgoth^{\Omega}$ for each $\pgoth \in X^{(1)}$,
	and extend linearly to  a $\Rat$-linear map $\Div(X, \Rat) \to \Div({\Omega},\Rat)$, $D \mapsto D^*$.
	It can be verified that the linear map $D \mapsto D^*$ is injective and has the following two properties:
	\begin{enumerate}
		\item[\rm(a)]  $\big( \div_X(\xi) \big)^* = \div_{\Omega}(\xi)$ for all $\xi \in K(X)^*$;
		\item[\rm(b)] if $f$ is a nonzero homogeneous element of $B$ and $D \in \Div(X, \Rat)$ satisfies $D^* = \div_{\Omega}(f)$, then $D \geq 0$ and $\Supp(D) = V_+(f)$.
	\end{enumerate}
\end{nothing}

\begin{lemma}  \label {7fr6d543sqderfvuwR8u390rucyq74etf}
	Let the assumptions and notation be as in Theorem \ref{Demazure}.
	\begin{enumerate}
		
		\item[\rm(a)](\cite[p.52]{Demazure}) If $T$ and $H$ are as in Theorem \ref{Demazure} then $H^* = \div_{\Omega}(T)$.
		
		\item[\rm(b)](\cite[Remark 5.10 (d)]{ChitayatDaigleCylindricity}) Let $T_1,T_2$ be homogeneous elements of $\Frac B$ of degree $1$ and for $i=1,2$
		let $H_i$ be the $\Rat$-divisor of $X$ that corresponds to $T_i$ as in Theorem \ref{Demazure}.
		Then $T_1/T_2 \in K(X)^*$ and $\div_X(T_1/T_2) = H_1 - H_2$. In particular, $H_1 \sim H_2$.
		
	\end{enumerate}
\end{lemma}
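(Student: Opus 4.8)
The plan is to prove part~(a) by a direct valuation-theoretic computation --- essentially unwinding the definitions of the $*$-operation and of the Demazure divisor $H$ --- and then to deduce part~(b) formally from (a), the $\Rat$-linearity of $D\mapsto D^*$, its injectivity, and property~(a) of \ref{Pkcnbvc9w3eidjojf0q9w}.

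For (a), I would first note that, writing $T=a/b$ with $a,b$ nonzero homogeneous elements of $B$, the divisor $\div_\Omega(T)=\div_\Omega(a)-\div_\Omega(b)$ is supported on prime divisors of the form $C^\Omega_\pgoth$ with $\pgoth\in X^{(1)}$, since any height-one prime containing a nonzero homogeneous element is itself homogeneous. As $H^*=\sum_\pgoth e_\pgoth h_\pgoth\,C^\Omega_\pgoth$ has the same shape, where $h_\pgoth$ is the coefficient of $C^X_\pgoth$ in $H$, it suffices to show $v^\Omega_\pgoth(T)=e_\pgoth h_\pgoth$ for every $\pgoth\in X^{(1)}$. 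One inequality is almost free: since $\height(B_+)>1=\height(\pgoth)$ we have $B_+\not\subseteq\pgoth$, so one may pick a nonzero homogeneous $f\in B$ of some degree $d\ge 1$ with $f\notin\pgoth$; then $fT^{-d}\in K(X)^*$, and from $f\in B_d=H^0(X,\OSheaf_X(dH))T^d$ together with Definition~\ref{pc09v82tcvsopd0} one gets $\div_X(fT^{-d})+\lfloor dH\rfloor\ge 0$, hence $v^X_\pgoth(fT^{-d})\ge-\lfloor dh_\pgoth\rfloor\ge-dh_\pgoth$; using $v^\Omega_\pgoth(f)=0$ and $v^\Omega_\pgoth|_{K(X)^*}=e_\pgoth v^X_\pgoth$ this yields $d\,v^\Omega_\pgoth(T)=-e_\pgoth v^X_\pgoth(fT^{-d})\le e_\pgoth d\,h_\pgoth$, i.e. $v^\Omega_\pgoth(T)\le e_\pgoth h_\pgoth$. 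For the reverse inequality I would use that $H$ is ample: then $\OSheaf_X(\lfloor dH\rfloor)\cong\OSheaf_X(d)$ is globally generated for all $d\gg 0$, and since $H^0(X,\OSheaf_X(\lfloor dH\rfloor))=B_dT^{-d}$ there is a nonzero $g\in B_d$ with $v^X_\pgoth(gT^{-d})=-\lfloor dh_\pgoth\rfloor$; then $d\,v^\Omega_\pgoth(T)=v^\Omega_\pgoth(g)-e_\pgoth v^X_\pgoth(gT^{-d})\ge e_\pgoth\lfloor dh_\pgoth\rfloor$ because $g\in B$ forces $v^\Omega_\pgoth(g)\ge 0$, so $v^\Omega_\pgoth(T)\ge e_\pgoth\lfloor dh_\pgoth\rfloor/d$. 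Letting $d\to\infty$ gives $v^\Omega_\pgoth(T)\ge e_\pgoth h_\pgoth$, which together with the first bound proves (a).

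Granting (a), part~(b) is immediate. Since $T_1$ and $T_2$ are nonzero homogeneous elements of $\Frac B$ of degree $1$, the quotient $T_1/T_2$ is homogeneous of degree $0$, hence lies in $K(X)^*$. Applying (a) to $T_1$ and $T_2$ and then using $\Rat$-linearity of $D\mapsto D^*$ and property~(a) of \ref{Pkcnbvc9w3eidjojf0q9w},
$$(H_1-H_2)^*=H_1^*-H_2^*=\div_\Omega(T_1)-\div_\Omega(T_2)=\div_\Omega(T_1/T_2)=\big(\div_X(T_1/T_2)\big)^*,$$
and injectivity of $D\mapsto D^*$ gives $H_1-H_2=\div_X(T_1/T_2)$, which is principal; in particular $H_1\sim H_2$.

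The step I expect to be the main obstacle is the reverse inequality in (a): this is exactly where ampleness of $H$ must be used, since one needs, for each prime divisor $C^X_\pgoth$ and for arbitrarily large $d$, a section of $\OSheaf_X(\lfloor dH\rfloor)$ --- equivalently an element of $B_d$ --- vanishing along $C^X_\pgoth$ to the smallest order permitted by $\lfloor dH\rfloor$. The floor functions are the reason the argument must be run asymptotically in $d$ rather than for a single value; everything else is bookkeeping with the valuation-theoretic properties of the $*$-operation recorded in \ref{Pkcnbvc9w3eidjojf0q9w}.
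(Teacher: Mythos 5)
Your part (b) matches the paper's proof exactly. For part (a) the paper simply cites Demazure's article (the first sentence of p.\ 52 of \cite{Demazure}), whereas you supply a self-contained valuation-theoretic argument, and it is correct. You correctly identify that the content is the reverse inequality $v^{\Omega}_\pgoth(T)\ge e_\pgoth h_\pgoth$, for which ampleness of $H$ is the essential input: the reflexive sheaf $\OSheaf_X(\lfloor dH\rfloor)$ is globally generated for $d\gg 0$ (write $d=km+r$ with $mH$ Cartier, so $\OSheaf_X(\lfloor dH\rfloor)\cong\OSheaf_X(mH)^{\otimes k}\otimes\OSheaf_X(\lfloor rH\rfloor)$, and apply Serre to the finitely many twists as $k\to\infty$), and passing to the limit over such $d$ absorbs the floor terms. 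A small streamlining worth noting: if you take a single $d$ divisible by $m$ and large enough that the genuine line bundle $\OSheaf_X(dH)$ is globally generated, then $\lfloor dh_\pgoth\rfloor=dh_\pgoth$ for every $\pgoth$ and the inequality comes out exact with no limit needed. The preliminary observation that $\div_{\Omega}(T)$ is supported only on the $C^{\Omega}_\pgoth$ with $\pgoth\in X^{(1)}$ is correct but tacit; the justification is that for a height-one prime $\pgoth$ of the graded domain $B$ containing a nonzero homogeneous element, the largest homogeneous ideal contained in $\pgoth$ is a nonzero prime inside $\pgoth$, hence equals $\pgoth$. In sum, the paper's proof of (a) is a citation and yours is a derivation; yours is longer, but it has the merit of making explicit precisely where ampleness of $H$ and the hypothesis $\height(B_+)>1$ are used.
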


The following is a special case of \cite[Lemma 5.20(a)]{ChitayatDaigleCylindricity}. One direction was originally shown in \cite[Remark 1.14]{KishimotoProkhorovZaidenberg}; we suspect both directions were likely known at the time.

\begin{lemma}\label{HpolarPrincipal}
	Let the assumptions and notation be as in Theorem \ref{Demazure}. Fix some choice of homogeneous $T \in \Frac B$ of degree 1 as well as its corresponding $\Rat$-divisor $H$. Then, a cylinder $U$ of $X$ is $H$-polar if and only if there exist $n  \geq 1$ and $h \in B_n \setminus \{0\}$ such that $U = D_+(h)$.
\end{lemma}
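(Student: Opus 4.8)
The plan is to prove both implications by transporting everything to the affine cone $\Omega=\Spec B$ via the injective $\Rat$-linear ``star'' map $D\mapsto D^*$ from $\Div(X,\Rat)$ to $\Div(\Omega,\Rat)$ recalled in \ref{Pkcnbvc9w3eidjojf0q9w}, using its two key properties, namely (a) $(\div_X(\xi))^*=\div_\Omega(\xi)$ for $\xi\in K(X)^*$, and (b) if $f$ is a nonzero homogeneous element of $B$ with $D^*=\div_\Omega(f)$ then $D\geq 0$ and $\Supp(D)=V_+(f)$, together with the identity $H^*=\div_\Omega(T)$ from Lemma \ref{7fr6d543sqderfvuwR8u390rucyq74etf}(a).

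For the implication ``$\Leftarrow$'', assume $U=D_+(h)$ with $h\in B_n\setminus\{0\}$ and $n\geq 1$. By Theorem \ref{Demazure} we have $B_n=\mathrm{H}^0(X,\OSheaf_X(nH))T^n$, so we may write $h=\xi T^n$ with $\xi\in\mathrm{H}^0(X,\OSheaf_X(nH))\setminus\{0\}$, i.e.\ $\div_X(\xi)+nH\geq 0$. Set $D:=\div_X(\xi)+nH\in\Div(X,\Rat)$; then $D$ is effective and $D\sim nH$ with $n\in\Rat^+$. Applying the star map, $\Rat$-linearity and property (a) give $D^*=\div_\Omega(\xi)+nH^*=\div_\Omega(\xi)+\div_\Omega(T^n)=\div_\Omega(\xi T^n)=\div_\Omega(h)$, whence property (b) yields $\Supp(D)=V_+(h)$, so $X\setminus\Supp(D)=D_+(h)=U$ and $U$ is $H$-polar.

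For the implication ``$\Rightarrow$'', assume $U=X\setminus\Supp(D)$ for some effective $\Rat$-Cartier $\Rat$-divisor $D$ with $D\sim sH$, $s\in\Rat^+$, and write $D-sH=\div_X(\eta)$ with $\eta\in K(X)^*$. Choose $m\in\Nat^+$ with $n:=ms\in\Nat^+$; then $mD=\div_X(\eta^m)+nH$ is effective, so $\eta^m\in\mathrm{H}^0(X,\OSheaf_X(nH))$ and hence $h:=\eta^m T^n\in B_n\setminus\{0\}$ by Theorem \ref{Demazure}. Exactly as above, $(mD)^*=\div_\Omega(\eta^m)+\div_\Omega(T^n)=\div_\Omega(h)$, so property (b) gives $\Supp(D)=\Supp(mD)=V_+(h)$ and therefore $U=X\setminus V_+(h)=D_+(h)$.

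The argument is thus formal manipulation of divisors once the star map is in hand; the only mildly delicate point is that the divisor $D=\div_X(\xi)+nH$ produced in the ``$\Leftarrow$'' direction must be $\Rat$-Cartier to qualify $U$ as an $H$-polar cylinder in the sense of Definition \ref{def:HPolarCylinder}. This is automatic whenever $X$ is $\Rat$-factorial, which holds in all the applications of this paper by Proposition \ref{cyclicQuotientSingularitiesPB}, and in the stated generality it is part of the content of Lemma 5.20(a) of \cite{ChitayatDaigleCylindricity}; the auxiliary passage to $mD$ in the forward direction is harmless because $\Supp(mD)=\Supp(D)$ and $\div_X(\eta^m)=m\,\div_X(\eta)$.
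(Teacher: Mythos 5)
Your proof is correct. The paper itself does not prove this lemma; it defers to Lemma 5.20(a) of \cite{ChitayatDaigleCylindricity}, so your argument supplies a proof that the paper leaves implicit. The mechanism you use — transporting the divisor computation to the affine cone via the star map $D\mapsto D^*$ of \ref{Pkcnbvc9w3eidjojf0q9w}, then invoking its injectivity, the identity $H^*=\div_\Omega(T)$ from Lemma \ref{7fr6d543sqderfvuwR8u390rucyq74etf}(a), and the support-recognition property (b) — is exactly the machinery the paper develops for this purpose, so you have essentially reconstructed what the cited reference does; both directions go through as you write them. Two small remarks. First, the passage from $\div_X(\eta^m)+nH\geq 0$ to $\eta^m\in\mathrm{H}^0(X,\OSheaf_X(\lfloor nH\rfloor))$ in the forward direction uses that $\div_X(\eta^m)$ has integer coefficients, so positivity against $nH$ already implies positivity against $\lfloor nH\rfloor$; this is the only place where the rounding convention of Definition \ref{pc09v82tcvsopd0} matters and it is handled correctly. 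Second, your closing caveat about $\Rat$-Cartierness of $D=\div_X(\xi)+nH$ is unnecessary: since $D-nH$ is principal (hence Cartier) and $H$ is ample (hence $\Rat$-Cartier by Definition \ref{ampleDefs}), $D$ is automatically $\Rat$-Cartier without any appeal to $\Rat$-factoriality of $X$, so the backward implication holds in the full generality of Theorem \ref{Demazure}.
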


\subsubsection*{Cylinders and $\mathbb{P}^1$-fibrations on Normal Projective Surfaces}
\label{P1fibrations}

We now recall basic geometric consequences of the existence of cylinders on normal projective surfaces. See \cite[Section 2.1]{cheltsov2020cylinders}.

    \begin{definition}
        A surjective morphism $\phi: V \to B$ between projective varieties $V$ and $B$ is a called \textit{$\PPP^1$-fibration} if a general closed fiber of $\phi$ is isomorphic to $\PPP^1$. 
    \end{definition}

\begin{nothing}\label{cylinderSetup}$($\cite[p.49]{cheltsov2020cylinders}$)$
Assume throughout this subsection that $U \isom Z \times \Aff^1$ is a cylinder contained in a normal projective surface $S$ (so $Z$ is an affine smooth curve). Note that since $U$ is smooth and affine and $S$ is proper over $\bk$, the $i = 0$ case of \cite[Theorem 4.3]{hartshornedeRham} implies that $S \setminus U$ is connected. The projection $\mathrm{pr}_{Z}:U\to Z$ extends to a rational map $\rho:S \dashrightarrow\bar{Z}$ where $\bar{Z}$ is the smooth projective model of $Z$ and the general fibers of $\rho$ in the rational sense are the closures in $S$ of the fibers of $\mathrm{pr}_{Z}$. Since the fibers of $\mathrm{pr}_{Z}$ are isomorphic to $\mathbb{A}^1$, their closures have a unique point at infinity. This implies that either $\rho:S \dashrightarrow\bar{Z}$ is an everywhere defined $\mathbb{P}^1$-fibration having one of the irreducible components of $S\setminus U$ as a section or it is a strictly rational map with a unique proper base point on $S$, equal to the intersection of these closures.  By resolving the singularities of $S$ as well as, if any, the indeterminacy of the rational map $\rho$ (by blowing-up its unique proper base point and then all the subsequent infinitely near points), we obtain the following commutative diagram where $\phi:W \to \bar{Z}$ is a $\PPP^1$-fibration and $W$ is smooth. 

            $$
		  \xymatrix{
			U \isom \aff^1 \times Z \ar@{^{(}->}[r] \ar[d]_{\mathrm{pr}_{Z}}  & S \ar@{-->}[d]^\rho & W \ar[l]_\alpha \ar[dl]^\phi \\		
			Z \ar@{^{(}->}[r] & \bar{Z} & &
		  }
		  $$
Let $C_1, \dots,C_n$ be the irreducible curves in $S$ such that $S \setminus U = \cup_{i = 1}^n C_i$.  Let $E_1, \dots, E_r,$ denote the exceptional curves of $\alpha$ and let $\tilde{C_i}$ denote the proper transform of $C_i$ in $W$. Then exactly one curve among $\tilde{C}_1, \dots, \tilde{C}_n, E_1, \dots, E_r$ (say either $\tilde{C}_1$ or $E_r$) is a section of $\phi$ and all other curves $\tilde{C}_i$ and $E_j$ are contained in closed fibers of $\phi$. Moreover, $\rho$ is a morphism if and only if $C_1$ is a section of $\rho$. 
\end{nothing}
  
        The following lemma is almost identical to \cite[Lemma 3.7]{cheltsov_park_won_2016} and the proof is essentially the same.  
        \begin{lemma} \label{cylinderSDNotLogCanonical} 
        Let $D=\sum_{i = 1}^n a_i C_i$ be an effective anti-canonical $\Rat$-divisor on normal projective surface $S$ with quotient singularities such that $U=X\setminus \mathrm{Supp}(D)$ is a cylinder. Then the log pair $(S,D)$ is not log-canonical. Moreover, if the map $\rho :S \dashrightarrow \bar{Z}$ as in \ref{cylinderSetup} has a unique proper base point $s\in S$, then the log pair $(S,D)$ is not log-canonical at $s$.
	\end{lemma}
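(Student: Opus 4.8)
The plan is to argue on the resolution diagram of \ref{cylinderSetup}. Write $\alpha: W\to S$ for the birational morphism resolving the singularities of $S$ together with, if necessary, the indeterminacies of $\rho$, write $\phi: W\to\bar{Z}$ for the resulting $\PPP^{1}$-fibration, let $E_{1},\dots,E_{r}$ be the $\alpha$-exceptional curves and let $\tilde C_{i}$ be the proper transform of $C_{i}$ on $W$. Since $D$ is anti-canonical we have $K_{S}+D\sim 0$; in particular $K_{S}+D$ is $\Rat$-Cartier, the log pair $(S,D)$ is defined, and $\alpha^{*}(K_{S}+D)\sim_\Rat 0$. I would record the discrepancy identity
$$K_{W}+\sum_{i=1}^{n}a_{i}\tilde C_{i}\;\sim_\Rat\;\alpha^{*}(K_{S}+D)+\sum_{j=1}^{r}b_{j}E_{j}\;\sim_\Rat\;\sum_{j=1}^{r}b_{j}E_{j},$$
where each $b_{j}$ is by definition the discrepancy of $E_{j}$ with respect to $(S,D)$. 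Recall that, since log-canonicity can be tested on any log resolution dominating $W$, the pair $(S,D)$ fails to be log-canonical at a point $p$ whenever $a_{i}>1$ for some $i$ with $p\in C_{i}$, or $b_{j}<-1$ for some $j$ with $\alpha(E_{j})=p$.

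Next I would intersect this identity with a general fiber $L$ of $\phi$. As $L\isom\PPP^{1}$ with $L^{2}=0$, adjunction gives $K_{W}\cdot L=-2$, and $\alpha^{*}(K_{S}+D)\cdot L=0$ since $\alpha^{*}(K_{S}+D)\sim_\Rat 0$. By the last assertion of \ref{cylinderSetup}, exactly one of the curves $\tilde C_{1},\dots,\tilde C_{n},E_{1},\dots,E_{r}$ — namely the section $\Sigma$ of $\phi$ — meets a general fiber $L$, and it meets $L$ transversally in a single point, while every other of these curves is contained in a closed fiber of $\phi$ and so is disjoint from $L$. Hence the identity collapses to
$$-2+\sum_{i=1}^{n}a_{i}\,(\tilde C_{i}\cdot L)=\sum_{j=1}^{r}b_{j}\,(E_{j}\cdot L),$$
where at most one term on each side is nonzero, that term having intersection number $1$ when it occurs.

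Finally I would split according to the dichotomy of \ref{cylinderSetup}. If $\rho$ is a morphism then, after relabeling the $C_{i}$, the section is $\Sigma=\tilde C_{1}$, so the displayed identity reads $-2+a_{1}=0$; thus $a_{1}=2>1$, and $(S,D)$ is not log-canonical, failing at every point of $C_{1}$. If instead $\rho$ is a strictly rational map, then by \ref{cylinderSetup} it has a unique proper base point $s\in S$ and the section is $\Sigma=E_{r}$, one of the $\alpha$-exceptional curves; now the identity reads $-2=b_{r}$, so the divisorial valuation attached to $E_{r}$ has discrepancy $b_{r}=-2<-1$ with respect to $(S,D)$. Since $E_{r}$ is the last exceptional curve produced in resolving the base point $s$ and its infinitely near points, its center on $S$ is $s$; having exhibited a divisorial valuation centered at $s$ with discrepancy $<-1$, we conclude that $(S,D)$ is not log-canonical at $s$, and a fortiori not log-canonical.

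I expect the only delicate point to be the bookkeeping inherited from \ref{cylinderSetup}: that a general fiber of $\phi$ meets the total boundary $\alpha^{-1}(S\setminus U)$ only along the section $\Sigma$, and transversally; and that in the strictly rational case the exceptional section $E_{r}$ has center precisely the base point $s$, including when $s$ happens to be a singular point of $S$ (where one first resolves the singularity at $s$ and then blows up the lifted base point, so that still $\alpha(E_{r})=s$). The remaining ingredients — the vanishing $\alpha^{*}(K_{S}+D)\cdot L=0$ and the adjunction $K_{W}\cdot L=-2$ — are routine.
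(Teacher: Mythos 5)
Your proposal is correct and follows essentially the same route as the paper's own proof: write the ramification formula $K_W + D_W \sim_\Rat \alpha^*(K_S+D)+\sum_j b_j E_j \sim_\Rat \sum_j b_j E_j$ on the resolution $W$ from \ref{cylinderSetup}, intersect with a general fiber $L$ of $\phi$, and use the dichotomy of \ref{cylinderSetup} to conclude $a_1=2$ when $\rho$ is a morphism and $b_r=-2$ when $\rho$ has a proper base point $s$ with $E_r$ centered at $s$. The only cosmetic difference is that you record the single intersection identity $-2+\sum_i a_i(\tilde C_i\cdot L)=\sum_j b_j(E_j\cdot L)$ before branching, whereas the paper computes the two cases separately, and you make explicit the remark about $\alpha(E_r)=s$ when $s$ is singular, which the paper leaves implicit.
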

	
	\begin{proof}
		We use the notation of \ref{cylinderSetup} throughout and assume further that $\alpha:W\to S$ is a resolution of singularities such that the union of the proper transform $D_W$  of $D$ on $W$ and of the exceptional locus $E=\overset{r}{\underset{j=1}{\bigcup}} E_i$ is an SNC divisor on $W$. Since
		$D$ is an effective anti-canonical $\Rat$-divisor on $S$, we have $K_{S}+D\sim_\Rat 0$, so the ramification formula for
		$\alpha$ reads 
		$$
		K_{W}+D_{W}\sim_{\mathbb{Q}}\alpha^{*}(K_{S}+D)+\sum_{j=1}^{r}b_{j}E_{j}\sim_{\mathbb{Q}}\sum_{j=1}^{r}b_{j}E_{j}.
		$$
         If $\rho:S\to \bar{Z}$ is a morphism, hence a $\mathbb{P}^1$-fibration, then there exists a unique component of $D$, say $C_1$, whose proper transform $\tilde{C}_1$ in $W$ is a section of $\phi$ and the proper transforms of all other irreducible components of $D$ as well as the exceptional curves of $\alpha$ are contained in closed fibers of $\phi$. For a general fiber $L$ of $\phi$, we obtain the equality $$0=(\sum_{j=1}^{r}b_{j}E_{j})\cdot L=(K_W+D_W)\cdot L=-2+a_1$$ which implies that $a_1=2$. Hence,
         \begin{equation}\label{intermediate}
         \text{for every point $p$ on $C_1$, the log pair $(S,D)$ is not log-canonical at $p$.} 
         \end{equation}
         If $\rho$ is not a morphism, then the irreducible components of $D_W$ are contained in closed fibers of $\phi$. Consequently, a general fiber $L\cong\mathbb{P}^{1}$ of $\phi:W\to\PPP^1$
		is a $(0)$-curve that does not intersect $D_{W}$ and (by \ref{cylinderSetup}) intersects exactly one of the curves $E_{j}$, say $E_r$,  transversely at a unique point. We have 
		\[
		-2=K_{W}\cdot L=(K_{W}+D_{W})\cdot L=(\sum_{j=1}^{r}b_{j}E_{j})\cdot L=b_{r}.
		\]
		Thus, $b_{r} = -2 < -1$ which shows that the log pair $(S,D)$ is not
		log-canonical at the point $s$. 
    \end{proof}

 The following classifies precisely which del Pezzo surfaces with at most Du Val singularities admit anti-canonical polar cylinders and is used in Section \ref{Sec:Cotype0}. 
	
\begin{theorem}$($\cite[Theorem 1.5]{cheltsov_park_won_2016}$)$\label{thm:CPWantiCanonical}
Let $S$ be a del Pezzo surface of degree $d$ with at most Du Val singularities. The surface $S$ does not admit a $-K_{S}$-polar cylinder if and only if one of the following hold:
    \begin{enumerate}[\rm(1)]
		\item $d = 1$ and $S$ allows only singular points of types $A_1, A_2, A_3, D_4$ if any;
		\item $d = 2$ and $S$ allows only singular points of type $A_1$ if any;
		\item $d = 3$ and $S$ allows no singular point.
\end{enumerate}
\end{theorem}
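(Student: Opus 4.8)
The plan is to treat the two halves of the statement separately — non-existence of anti-canonical polar cylinders for the surfaces in (1)--(3), and their existence for every other del Pezzo surface with du Val singularities — and to organize both around the classification of such surfaces: in each fixed degree $d$ there are only finitely many admissible singularity types (equivalently, finitely many possibilities for the lattice spanned by the $(-2)$-curves on the minimal resolution), so both halves reduce to a finite case analysis, the hard cases being in degrees $1$, $2$, $3$.

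For the non-existence half, suppose toward a contradiction that $U = S_d \setminus \Supp(D)$ is a $-K_{S_d}$-polar cylinder; then $D$ is an effective $\Rat$-divisor with $D \sim_\Rat -K_{S_d}$ and $\Supp(D) = S_d \setminus U$, and Lemma \ref{cylinderSDNotLogCanonical} tells us that $(S_d, D)$ fails to be log-canonical at some point $p \in S_d$. I would then exploit the fine structure of $|-K_{S_d}|$ to rule this out. For $d = 3$ smooth, one shows — by a multiplicity/intersection estimate against hyperplane sections and lines, using Lemma \ref{lem:multlc} to get $\mult_p D > 1$ — that the non-log-canonical point forces $\Supp(D)$ to contain an anti-canonical (hyperplane-section) curve $C$ with a cusp at $p$, and one then checks directly that $S_3 \setminus \Supp(C)$ is not $\Aff^1$-fibered. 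For $d = 2$ with at worst $A_1$-singularities, one argues analogously (Lemma \ref{CPWdegree2} in the smooth case, and an identical analysis on the weak del Pezzo minimal resolution when an $A_1$-point is present): $\Supp(D)$ must contain either a cuspidal anti-canonical curve or a pair of $(-1)$-curves meeting tangentially, neither of whose complements can be a cylinder. For $d = 1$, Lemma \ref{lem:linearSystemDP1} and Corollary \ref{CPWeasyCor} apply precisely because the only permitted singular points are $A_1, A_2, A_3, D_4$: the non-log-canonical point $p$ lies on a unique curve $C \in |-K_{S_1}|$, the pair $(S_1, C)$ is itself non-log-canonical at $p$ with $\Supp(C) \subseteq \Supp(D)$, and once more $S_1 \setminus \Supp(C)$ fails to be a cylinder, a contradiction.

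For the existence half, I would go through the remaining surfaces and, in each case, exhibit an explicit $\Aff^1$-fibration on a Zariski-open $U \subseteq S_d$ together with a verification that $S_d \setminus U$ supports an effective divisor $\Rat$-proportional to $-K_{S_d}$. The cylinders are produced by birational reduction: contract a suitable chain of $(-1)$-curves on the minimal resolution down to $\mathbb{P}^2$, a smooth quadric, or a del Pezzo surface of larger degree already treated, pull back a pencil of lines or conics through suitably placed points, and delete one member of the pencil together with the images of the contracted curves. When $\Pic(S_d)$ has rank one this is immediate, since then every nonzero effective divisor is $\Rat$-proportional to the ample generator $-K_{S_d}$, so any cylinder is automatically anti-canonical polar (Remark \ref{HH'polar}); for larger Picard rank one computes the class of the boundary explicitly. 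Alternatively, via Demazure's construction one may present a cylinder as $D_+(h)$ for a homogeneous element $h$ of the anti-canonical section ring (Lemma \ref{HpolarPrincipal}).

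The hardest part will be the degree-$1$ non-existence statement. There $|-K_{S_1}|$ has a base point, its general member is elliptic while special members are cuspidal, and the non-log-canonical point $p$ can be the base point, another smooth point, or one of the allowed singular points; these must all be handled uniformly by passing to the minimal resolution (a weak del Pezzo of degree $1$) and its induced elliptic fibration, and combining the precise list of singularity types with the classification of non-log-canonical anti-canonical pairs. This is also exactly the step that breaks if one enlarges the list beyond $A_1, A_2, A_3, D_4$, which is consistent with the ``otherwise'' clause of the theorem; and the degree-$2$ and smooth-cubic cases, while easier, still require the corresponding precise description of non-log-canonical anti-canonical pairs rather than a crude global log canonical threshold bound, since those surfaces do admit non-log-canonical anti-canonical divisors.
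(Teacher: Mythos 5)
This statement is not proved in the paper: it is quoted verbatim as Theorem 1.5 of Cheltsov--Park--Won \cite{cheltsov_park_won_2016}, and the authors use it as a black box (together with its companion Lemmas~\ref{lem:linearSystemDP1} and~\ref{CPWdegree2}). So there is no ``paper's own proof'' to compare against; what you have written is an outline of the argument one would expect to find in \cite{cheltsov_park_won_2016}, and it does capture the broad shape of that proof: assume a $(-K)$-polar cylinder $U = S\setminus\Supp(D)$ exists, use Lemma~\ref{cylinderSDNotLogCanonical} to locate a non-log-canonical point of $(S,D)$, invoke the precise classification of non-log-canonical anti-canonical pairs in low degree, and derive a contradiction; and for the other direction, construct cylinders explicitly by blowing down to $\mathbb{P}^2$ or a higher-degree del Pezzo.

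However, your non-existence argument has a real gap at the final step. From $\Supp(C)\subseteq\Supp(D)$ and ``$(S,C)$ not log-canonical at $p$'' you conclude ``$S\setminus\Supp(C)$ fails to be a cylinder, a contradiction,'' but this does not follow: the cylinder $U = S\setminus\Supp(D)$ is a possibly much smaller open subset of $S\setminus\Supp(C)$, and an open subset of a non-cylinder can perfectly well be a cylinder. Moreover $S\setminus\Supp(C)$ need not fail to be a cylinder at all. The actual contradiction in \cite{cheltsov_park_won_2016} is obtained quite differently, by a careful analysis of the boundary coefficients of $D$ near the proper base point $s$ of the induced pencil, combined with a convexity trick: one writes down an auxiliary effective anti-canonical $\Rat$-divisor $D_0$, obtained by subtracting a multiple of $C$ from a rescaling of $D$ so that $\Supp(D_0)$ no longer contains $C$, checks that $(S,D_0)$ is still non-log-canonical at the same point, and then applies the structure theorem a second time to force $\Supp(C)\subseteq\Supp(D_0)$, the desired contradiction. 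Exactly this perturbation is carried out in the paper's own Section~\ref{23412Rigid} (Proposition~\ref{propertiesofD0}) when the authors need to extend the Cheltsov--Park--Won method to the non-Du Val del Pezzo $\Proj B_{2,3,4,12}$; so the mechanism you are missing is the one the paper itself highlights as the technical crux of such arguments. For degree $3$ there is also an imprecision: what one needs is not that ``$S_3\setminus\Supp(C)$ is not $\Aff^1$-fibered'' but rather that the cuspidal anti-canonical cubic $C$ and the base point $s$ force the pencil of residual fibers to behave incompatibly with the coefficient bound $\mult_p D > 1$, again via the same kind of coefficient/perturbation analysis rather than a direct topological obstruction.
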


\section{Reductions of the Conjecture}
\label{sec:reduction}
In this section, we show that Main Conjecture 
holds for all graded rings $B_{a_0,\ldots,a_n}$ provided that it holds for those whose associated quasismooth hypersurfaces 
$$\Proj(B_{a_0, \dots, a_n}) = \{\sum_{i = 0}^n X_i^{a_i}=0\} \subset \PPP = \PPP(w_0, \dots, w_n)$$ are well-formed. The proof builds in part on a characterization of these rings as being precisely those for which the $(n+1)$-tuple $(a_0, \ldots, a_n)$ has cotype $0$. We then recall, following a general principle introduced in \cite{KishimotoProkhorovZaidenberg},the relationship between the rigidity of $B_{a_0,\ldots, a_n}$ and the non-existence of certain polar cylinders in  $\Proj(B_{a_0, \dots, a_n})$.

\subsection{Reduction to well-formed hypersurfaces}

\begin{nothing}\label{PBGrading} Let $n \geq 2$ and let $S = (a_0, \dots, a_n) \in (\Nat^+)^{n+1}$. Let $f = X_0^{a_0} + \dots  + X_n^{a_n} \in \bk[X_0, \dots , X_n]$. Let $L = \lcm(a_0, \dots, a_n)$, let $\deg(X_i) = w_i = L / a_i$ for each $i \in \{0, 1, \dots, n\}$ and note that $\gcd(w_0, \dots, w_n) = 1$. Then $f$ is homogeneous of degree $L$, $B_{a_0,\ldots, a_n} = \bk_{w_0, \dots, w_n}[X_0, \dots , X_n] / \lb f \rb$ is an $\Nat$-graded ring, and $\deg(x_i) = w_i$ for every $i = 0, \dots, n$.
 Since $B_{a_0,\ldots, a_n}$ is regular in codimension one and Cohen-Macaulay (since it is a hypersurface), it follows from Serre's Normality Criterion that $B_{a_0,\ldots, a_n}$ is normal. Consequently, the variety $$X_f = \Proj(\bk_{w_0, \dots, w_n}[X_0, \dots , X_n] / \lb f \rb) = \Proj(B_{a_0, \dots, a_n})$$ is a normal quasismooth weighted projective hypersurface of degree $L$ in the weighted projective space $\mathbb{P}(w_0,\ldots,w_n)$.  
\end{nothing}

\begin{definition}$($\cite[Section 3]{ChitayatDaigleCylindricity} $)$\label{saturationIndex}
		Let $B = \bigoplus_{i \in \Integ} B_i$ be a $\Integ$-graded domain. The \textit{saturation index of $B$} is defined as $ e(B) = \gcd\setspec{i \in \Integ}{B_i \neq 0}.$ The graded ring $B$ is \textit{saturated in codimension 1} if $e(B/\pgoth) = e(B)$ for every homogeneous height one prime ideal $\pgoth$ of $B$. 
	\end{definition} 
	\begin{remark}$($\cite[Example 3.16]{ChitayatDaigleCylindricity} $)$\label{PBsatCodim1}
		Let $n \geq 2$ and let $B = B_{a_0, \dots, a_n}$. Then the following are equivalent:
		
		\begin{itemize}
			\item $B$ is saturated in codimension 1; 
			\item $\cotype(a_0, \dots, a_n) = 0$. 
		\end{itemize}
	\end{remark}

\begin{proposition}\label{PBWellFormed}\cite[Proposition 4.6]{chitayat2023rationality}
		Let $f = X_0^{a_0} + \dots + X_n^{a_n}$ where $n \geq 2$ and $a_i \geq 1$ for all $i$. Let $L = \lcm(a_0, \dots, a_n)$ and let $w_i = L / a_i$. Then, the weighted hypersurface $X_f \subset \PPP(w_0, \dots, w_n)$ is well-formed and quasismooth if and only if $\cotype(a_0, \dots , a_n) = 0$.
\end{proposition}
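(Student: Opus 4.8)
Since quasismoothness of $X_f$ is automatic, the content of the statement is that $X_f$ is well-formed if and only if $\cotype(S)=0$, where $S=(a_0,\dots,a_n)$ and, as in Definition~\ref{typeDef}, $S_i=(a_0,\dots,\widehat{a_i},\dots,a_n)$, so that $\cotype(S)=0$ means $\lcm(S_i)=L$ for every $i$. First I would record that $X_f$ is always quasismooth: its affine cone $C_{X_f}=\{f=0\}$ is integral by \ref{PBGrading}, and the partial derivatives of $f$ are the $a_iX_i^{a_i-1}$; if some $a_i=1$ these never vanish simultaneously, and if all $a_i\geq 2$ they vanish only at the origin, so in either case $C_{X_f}$ is nonsingular away from $0$.

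The first real step is an arithmetic translation. I claim that for each $i$,
\[
\gcd\big(w_0,\dots,\widehat{w_i},\dots,w_n\big)=L/\lcm(S_i).
\]
This is checked prime by prime: for a prime $p$ with $p$-adic valuation $v_p$ one has $v_p(L)=\max_j v_p(a_j)$, hence $v_p(w_j)=v_p(L)-v_p(a_j)\geq 0$, and therefore $v_p\big(\gcd_{j\neq i}w_j\big)=v_p(L)-\max_{j\neq i}v_p(a_j)=v_p(L)-v_p(\lcm(S_i))=v_p(L/\lcm(S_i))$. Combined with Remark~\ref{wellFormedType0} (or directly with the definition of a well-formed weighted projective space), this gives: $\PPP=\PPP(w_0,\dots,w_n)$ is well-formed $\iff$ $\lcm(S_i)=L$ for all $i$ $\iff$ $\cotype(S)=0$. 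In particular, if $\cotype(S)\neq 0$ then $\PPP$ is not well-formed, so $X_f$ is not well-formed by Definition~\ref{wellFormed}; this settles one implication.

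For the converse, assume $\cotype(S)=0$, so $\PPP$ is well-formed and it remains to verify $\codim_{X_f}(X_f\cap\Sing(\PPP))\geq 2$, i.e. (as $\dim X_f=n-1$) that $\dim(X_f\cap\Sing(\PPP))\leq n-3$. By Proposition~\ref{SingP}, $\Sing(\PPP)=\bigcup_p \PPP_p$ where $\PPP_p=V_+(X_j : j\notin J_p)$ with $J_p=\{j : p\mid w_j\}$, a weighted linear subspace of dimension $|J_p|-1$. Well-formedness of $\PPP$ means $\gcd(w_j : j\neq i)=1$ for all $i$, hence no prime divides $n$ or more of the weights, i.e. $|J_p|\leq n-1$ for every $p$. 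On $\PPP_p$ we have $X_f\cap\PPP_p=\{\sum_{j\in J_p}X_j^{a_j}=0\}$: if $|J_p|\leq 1$ this is empty (either $\PPP_p=\emptyset$, or $\PPP_p$ is a single coordinate point where $f$ evaluates to $X_j^{a_j}=1\neq 0$), and if $2\leq|J_p|\leq n-1$ it is a hypersurface of $\PPP_p$ of dimension $|J_p|-2\leq n-3$. Since there are only finitely many distinct sets $J_p$, we get $\dim(X_f\cap\Sing(\PPP))\leq n-3$, so $X_f$ is well-formed. The only genuinely substantive step here is the arithmetic identity of the second paragraph, which is what converts the well-formedness condition on the weights $w_i$ into the combinatorial cotype condition on the exponents $a_i$; the codimension estimate is then a routine case analysis built on Dimca's description of $\Sing(\PPP)$.
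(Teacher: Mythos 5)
Your proof is correct, and it takes a genuinely different route from the paper's. The paper's proof is a two-line citation chain: it invokes Proposition 4.6 of an external reference (which equates well-formedness of $X_f$ with the graded ring $B_{a_0,\dots,a_n}$ being saturated in codimension 1) and then Remark~\ref{PBsatCodim1} (which equates that saturation property with $\cotype=0$). You instead give a direct, self-contained argument in two steps: the prime-by-prime identity $\gcd(w_0,\dots,\widehat{w_i},\dots,w_n)=L/\lcm(S_i)$, which shows well-formedness of the \emph{ambient} $\PPP$ is equivalent to $\cotype(S)=0$; and a codimension estimate via Proposition~\ref{SingP}, showing that once $\PPP$ is well-formed, the second condition in Definition~\ref{wellFormed} (namely $\codim_{X_f}(X_f\cap\Sing\PPP)\geq 2$) holds automatically for the Pham--Brieskorn hypersurface because each stratum $\PPP_p$ has $|J_p|\leq n-1$ and $f$ restricts to a nonzero form on it. Your approach buys transparency and self-containment (it also surfaces the nice fact that for these hypersurfaces, well-formedness of $X_f$ is equivalent to well-formedness of $\PPP$ alone), at the cost of being longer; the paper's version is terser but opaque without chasing down the notion of codimension-1 saturation in the cited sources. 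Both are valid.
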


		We will now show that if the Main Conjecture
  holds for rings $B_{a_0,\ldots, a_n}$ with $\cotype(a_0,\ldots, a_n)=0$ then it holds in general. 

        \begin{notation}
    Let $B$ be a $\Integ$-graded ring and let $A$ be a graded subring of $B$. Define $I_A = \setspec{i \in \Integ}{A_i \neq 0}$ and define $\Integ(A)$ to be the subgroup of $\Integ$ generated by $I_A$.
    \end{notation}
 
		\begin{definition} $($\cite[Definition 5.1, $G = \Integ$]{DFM2017} $)$
			Let $B$ be a $\Integ$-graded ring. A nonzero homogeneous
			element $x$ of $B$ is \textit{$\Integ$-critical} if there exists a graded subring $A \subset B$ such that $\Integ(A) \neq \Integ(B)$ and $B = A[x]$. 
		\end{definition}
		
		\begin{lemma}$($\cite[Thoerem 6.2, $G = \Integ$]{DFM2017} $)$ \label{ZcriticalLocalSlice}
			Let $B$ be a $\Integ$-graded integral domain, and let $D \in \lnd(B)$ be homogeneous. For every $\Integ$-critical element $x \in B$, $D^2(x) = 0$.
		\end{lemma}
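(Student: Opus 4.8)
The plan is to prove the equivalent statement $\deg_D(x)\le 1$. We may assume $D\neq 0$. First I would extract the arithmetic hidden in the hypothesis: writing $B=A[x]$ with $\Integ(A)\neq\Integ(B)$, every homogeneous element of $B$ is a finite sum of terms $ax^{k}$ with $a\in A$ homogeneous and $k\ge 0$, so $\Integ(B)=\Integ(A)+\Integ\deg(x)$ and hence $\deg(x)\notin\Integ(A)$; in particular $\deg(x)\neq 0$, and after replacing the grading by its opposite we may assume $\deg(x)>0$. Put $r:=[\Integ(B):\Integ(A)]\in\{2,3,\dots\}\cup\{\infty\}$. Only two facts about $D$ are needed: that $\ker(D)$ is factorially closed (Theorem~\ref{p0cfi2k309cbqp90ws}(a)), and that $\deg_D$ is a multiplicative degree function; from the latter I would first record the elementary observation that \emph{$f\mid D(f)$ with $f\neq 0$ forces $D(f)=0$} (else $D(f)=gf$ with $g\neq 0$, and $\deg_D(f)-1=\deg_D(D(f))=\deg_D(g)+\deg_D(f)$ gives the absurdity $\deg_D(g)=-1$).

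The main work, for $r<\infty$, is to coarsen the grading: group the homogeneous pieces of $B$ by the class of their degree modulo $\Integ(A)$, obtaining $B=\bigoplus_{c}B^{[c]}$ indexed by the cyclic group $G=\Integ(B)/\Integ(A)$ of order $r$. Then $A\subseteq B^{[0]}$, the class $[d]$ of $\deg(x)$ generates $G$, and — this is where $B=A[x]$ enters — $B^{[j[d]]}=x^{j}B^{[0]}$ for $j\in\{0,\dots,r-1\}$, so that lying in a prescribed $B^{[c]}$ forces divisibility by a definite power of $x$. The derivation $D$ is homogeneous of some weight $\bar h=e\,[d]$, $e\in\Integ/r$, for this coarser grading (the weight lies in $G$, else $D\equiv 0$), whence $D^{j}(x)\in B^{[(1+je)[d]]}=x^{\,(1+je)\bmod r}\,B^{[0]}$ for all $j$. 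Now assume for contradiction $m:=\deg_D(x)\ge 2$, so $D(x)\neq 0$. If $e\not\equiv -1\pmod r$ then $(1+e)\bmod r\ge 1$, so $x\mid D(x)$, and the observation gives $D(x)=0$ — contradiction; hence $e\equiv -1\pmod r$ and $D^{j}(x)\in x^{\,(1-j)\bmod r}B^{[0]}$. Take $j=m$: $D^{m}(x)$ is a nonzero element of $\ker(D)$ divisible by $x^{\,(1-m)\bmod r}$, so if $(1-m)\bmod r\ge 1$ then factorial closedness of $\ker(D)$ gives $x\in\ker(D)$, i.e.\ $D(x)=0$ — impossible; so $m\equiv 1\pmod r$. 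Take $j=m-1$: then $(2-m)\bmod r=1$, so $D^{m-1}(x)=xw$ with $w\in B^{[0]}\setminus\{0\}$, and since $D^{m-1}(x)$ is a local slice with $\deg_D(D^{m-1}(x))=1$ while $\deg_D(xw)=m+\deg_D(w)\ge 2$, we reach the final contradiction. Thus $\deg_D(x)\le 1$.

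The degenerate case $\Integ(A)=\{0\}$ (i.e.\ $r=\infty$) I would dispose of separately: then $A=A_{0}$ and $B=A_{0}[x]$ is $\Nat$-graded since $\deg(x)>0$, and a glance at the possible values of $\deg(D)$ shows that either $D(x)=0$, or $D(x)\in xB$ (so $D(x)=0$ by the observation), or $D(x)\in A_{0}$ and then $D^{2}(x)\in B_{\deg(D)}=0$ for degree reasons. The step I expect to be the most delicate is setting up the coarse $G$-grading in the second paragraph — in particular proving $B^{[j[d]]}=x^{j}B^{[0]}$, which is precisely what turns membership statements into divisibility by powers of $x$ — and then keeping the residues $(1-j)\bmod r$ straight so as to single out the unique weight $e\equiv -1$ for which $D(x)$ slips into $B^{[0]}$ and escapes the observation; once that weight is isolated, factorial closedness of $\ker(D)$ (applied to $D^{m}(x)$) and multiplicativity of $\deg_D$ (applied to the local slice $D^{m-1}(x)$) finish things quickly.
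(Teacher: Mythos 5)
The paper itself does not give a proof of this lemma; it cites it as a special case of Theorem~6.2 of \cite{DFM2017}. So the only meaningful assessment here is whether your self-contained argument is correct, and it is.

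The overall strategy is sound: reduce to showing $\deg_D(x)\le 1$, and exploit the interaction between the grading and the structure $B=A[x]$. Your key structural reduction --- passing from the $\Integ$-grading to the coarser grading by the cyclic group $G=\Integ(B)/\Integ(A)$ of order $r$, and proving $B^{[j[d]]}=x^{j}B^{[0]}$ for $0\le j<r$ --- is correct and is precisely where the hypothesis $B=A[x]$ with $\Integ(A)\neq\Integ(B)$ does its work: writing a homogeneous $b\in B_n$ as $\sum_k a_k x^k$ with $a_k\in A$ homogeneous of degree $n-kd$, the terms vanish unless $n-kd\in\Integ(A)$, which forces $k\equiv j\pmod r$ and hence $x^j\mid b$. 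The auxiliary observation ($f\mid D(f)$, $f\neq 0$ forces $D(f)=0$) follows cleanly from multiplicativity of $\deg_D$, and the pinch between ``$\ker D$ is factorially closed'' (applied to $D^{m}(x)$ to get $m\equiv 1\pmod r$) and multiplicativity of $\deg_D$ (applied to the local slice $D^{m-1}(x)=xw$, giving $1=m+\deg_D(w)\ge m\ge 2$) is exactly what closes the contradiction. The $r=\infty$ case is correctly disposed of by noting $A=A_0$, so $B\cong A_0[x]$ concentrated in degrees $d\Nat$, and then checking the three possible positions of $\deg D$ relative to $-d$. Since I cannot inspect the proof in \cite{DFM2017}, I cannot say whether your route matches theirs; what matters here is that your argument is complete and replaces the external citation with a short, standalone proof using only facts already recorded in the paper (Theorem~\ref{p0cfi2k309cbqp90ws}(a) and the properties of $\deg_D$ from Definitions~\ref{absoluteDegree}).
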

		
		\begin{lemma}$($\cite[Lemma 3.1]{freudenburg2013} $)$\label{quasiextensionLemma}
			Assume $R$ is a $\bk$-domain, $f \in R$, $n \geq 2$, and $f+Z^n$ is a prime element of $R[Z]$. If $|f|_R \leq 1$, then $B = R[Z] / \lb Z^n + f \rb$ is not rigid. 
		\end{lemma}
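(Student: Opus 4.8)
Here is the plan. The key is to construct an explicit locally nilpotent derivation of $B$ by lifting one from $R$, following the standard ``kernel–preserving extension'' idea.

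The plan is to distill from the hypothesis a nonzero locally nilpotent derivation $D$ of $R$ with $D^{2}(f)=0$, and then to write down an explicit locally nilpotent derivation of $B$ lifting it. First I would note that $R$ cannot be rigid: otherwise Definition \ref{absoluteDegree} forces $f=0$, so that $Z^{n}+f=Z^{n}$ fails to be prime in $R[Z]$ (as $n\ge 2$), contradicting the hypothesis. Hence $|f|_{R}=\min\{\deg_{D}(f):D\in\lnd(R)\setminus\{0\}\}$, and since this is $\le 1$ we may fix some $D\in\lnd(R)\setminus\{0\}$ with $D^{2}(f)=0$; put $s=D(f)$, so $s\in\ker D$.

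Next I would extend $D$ to the derivation $\hat D$ of $R[Z]$ with $\hat D(Z)=0$ (locally nilpotent since $\hat D^{k}(\sum_i r_iZ^i)=\sum_i D^{k}(r_i)Z^i$) and consider
\[
E \;=\; -\,nZ^{n-1}\hat D \;+\; s\,\frac{\partial}{\partial Z}\, ,
\]
a derivation of $R[Z]$. A direct computation using $\hat D(Z)=0$ and $\hat D(f)=D(f)=s$ gives $E(Z^{n}+f)=nsZ^{n-1}-nsZ^{n-1}=0$, so $E$ preserves the ideal $(Z^{n}+f)$ and descends to a derivation $\bar E$ of $B=R[Z]/(Z^{n}+f)$.

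The heart of the proof — and the step I expect to be the main obstacle — is verifying that $E$, hence $\bar E$, is locally nilpotent. Here I would work with the set $\operatorname{Nil}(E)=\{x\in R[Z]:E^{N}(x)=0\text{ for some }N\}$, using two elementary facts: it is a $\bk$-subalgebra of $R[Z]$, and $E(x)\in\operatorname{Nil}(E)$ implies $x\in\operatorname{Nil}(E)$. Since $E(Z)=s$ and $E(s)=-nZ^{n-1}D(s)=0$ (as $s\in\ker D$), we have $Z\in\operatorname{Nil}(E)$. To get $R\subseteq\operatorname{Nil}(E)$ I would induct on $j$ to show $\ker(D^{j+1})\subseteq\operatorname{Nil}(E)$: the base case $j=0$ holds because $E$ vanishes on $\ker D$, and for the inductive step, if $r\in\ker(D^{j+1})$ then $E(r)=-nZ^{n-1}D(r)$ lies in $\operatorname{Nil}(E)$ (as $D(r)\in\ker(D^{j})$ by induction and $Z^{n-1}\in\operatorname{Nil}(E)$), forcing $r\in\operatorname{Nil}(E)$. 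Since $D$ is locally nilpotent, $R=\bigcup_j\ker(D^{j+1})$; thus $\operatorname{Nil}(E)$ contains both $R$ and $Z$ and is therefore all of $R[Z]$, so $E\in\lnd(R[Z])$ and $\bar E\in\lnd(B)$.

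It remains to check $\bar E\ne 0$. Picking $r\in R$ with $D(r)\ne 0$ (possible since $D\ne 0$), the element $E(r)=-nZ^{n-1}D(r)$ is a nonzero polynomial of $Z$-degree at most $n-1$, hence is not in the ideal $(Z^{n}+f)$, whose nonzero members have $Z$-degree at least $n$ because $Z^{n}+f$ is monic of degree $n$ over the domain $R$. Therefore $\bar E(\bar r)\ne 0$, so $\lnd(B)\ne\{0\}$ and $B$ is not rigid. I would also remark that when $s=0$ the local-nilpotency step is immediate — $-nZ^{n-1}\hat D=(-nZ^{n-1})\hat D$ with $Z^{n-1}\in\ker\hat D$ is locally nilpotent by Theorem \ref{p0cfi2k309cbqp90ws}(c) — but the $\operatorname{Nil}$-argument above disposes of both cases at once.
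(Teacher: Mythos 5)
Your proof is correct, and it follows essentially the same route as the cited source: the paper defers to Lemma 3.1 of \cite{freudenburg2013} without reproducing the argument, and your derivation $E = D(F)\tfrac{\partial}{\partial Z} - F_Z\,\hat D$ (with $F = Z^n+f$) is exactly the standard Jacobian-type construction used there, with the local nilpotency of $E$ established the same way via the $\operatorname{Nil}$-subalgebra and the filtration by $\ker(D^{j+1})$.
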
 
  
		\begin{lemma}\cite[Lemma 1.3.6]{ChitayatThesis}\label{ufh873268guj83497yg}
			Let $n \geq 2$, let $S = (a_0,\dots,a_n) \in (\Nat^+)^{n+1}$,
			and consider 
			$$
			B_S = \bk[X_0,\dots,X_n] / \lb X_0^{a_0} + \cdots + X_n^{a_n} \rb = \bk[x_0,\dots,x_n] .
			$$
			Let $m \in \Nat^+$, let $S^m = (a_0, \dots, a_{n-1}, m a_n)$ and write 
			$$B_{S^m} = \bk[Y_0, \dots, Y_n] / \lb Y^{a_0} + \dots + Y_{n-1}^{a_{n-1}} + Y_n^{ma_n} \rb = \bk[y_0, \dots, y_n].$$ Then there is a $\bk$-isomorphism $B_S[Z] / \lb Z^m - x_n \rb \isom B_{S^m}$ where $Z$ is an indeterminate over $B_S$.
		\end{lemma}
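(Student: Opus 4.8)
The plan is to produce an explicit $\bk$-algebra isomorphism together with its inverse, both described on generators; the statement is purely about ring presentations, so no primality or domain hypothesis is needed for the argument.

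The quickest route is to eliminate the variable $x_n$. Since $Z^m-x_n$ is monic in $Z$, we may first identify $B_S[Z]/\langle Z^m-x_n\rangle$ with $\bk[X_0,\dots,X_n,Z]/\langle X_0^{a_0}+\cdots+X_n^{a_n},\,Z^m-X_n\rangle$. Next, because $Z^m-X_n$ is monic (indeed linear) in $X_n$, the assignment $X_n\mapsto Z^m$ gives a $\bk$-algebra isomorphism $\bk[X_0,\dots,X_n,Z]/\langle Z^m-X_n\rangle\isom\bk[X_0,\dots,X_{n-1},Z]$, under which the class of $X_0^{a_0}+\cdots+X_n^{a_n}$ is sent to $X_0^{a_0}+\cdots+X_{n-1}^{a_{n-1}}+(Z^m)^{a_n}=X_0^{a_0}+\cdots+X_{n-1}^{a_{n-1}}+Z^{m a_n}$. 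Passing to the quotient by this element on both sides yields
$$B_S[Z]/\langle Z^m-x_n\rangle\;\isom\;\bk[X_0,\dots,X_{n-1},Z]/\big\langle X_0^{a_0}+\cdots+X_{n-1}^{a_{n-1}}+Z^{m a_n}\big\rangle,$$
and relabelling $X_i$ as $Y_i$ for $0\le i\le n-1$ and $Z$ as $Y_n$ identifies the right-hand side with $B_{S^m}$, which is the desired conclusion. Equivalently, one can write the two mutually inverse maps directly: define $\bar\varphi\colon B_{S^m}\to B_S[Z]/\langle Z^m-x_n\rangle$ by $y_i\mapsto x_i$ for $0\le i\le n-1$ and $y_n\mapsto Z$ (well defined since $x_0^{a_0}+\cdots+x_{n-1}^{a_{n-1}}+(Z^m)^{a_n}=x_0^{a_0}+\cdots+x_{n-1}^{a_{n-1}}+x_n^{a_n}=0$ in $B_S$), and define a map backwards by $x_i\mapsto y_i$ for $0\le i\le n-1$, $x_n\mapsto y_n^m$, $Z\mapsto y_n$ (which kills both $X_0^{a_0}+\cdots+X_n^{a_n}$ and $Z^m-x_n$, hence descends to $\bar\psi$ on the quotient), then check $\bar\psi\circ\bar\varphi$ and $\bar\varphi\circ\bar\psi$ on generators.

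There is essentially no obstacle here; the proof is a direct manipulation of presentations. The only point deserving a moment's care is the elimination step, equivalently the well-definedness of the two explicit maps: since $Z^m-X_n$ is monic in the eliminated variable $X_n$ and $Z^m-x_n$ is monic in $Z$, each passage to a quotient is an honest ring isomorphism, and one simply confirms that the Pham--Brieskorn relation is carried to the expected element on the other side.
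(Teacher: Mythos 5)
Your proof is correct. The paper itself leaves this lemma to the reader (citing Lemma 1.3.6 of the first author's thesis), so there is no in-paper proof to compare against, but the argument you give is the natural one: rewrite $B_S[Z]/\langle Z^m-x_n\rangle$ as a quotient of $\bk[X_0,\dots,X_n,Z]$, eliminate $X_n$ via $X_n\mapsto Z^m$, observe how the Pham--Brieskorn relation transforms, and relabel. The explicit pair of mutually inverse maps on generators is a welcome check that everything descends correctly through the quotients. No gaps.
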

		
		\begin{proposition}\label{cotype1method}
			Let $n \geq 2$ and suppose $(a_0,\dots,a_n) \in (\Nat^+)^{n+1}$ satisfies
			\begin{itemize}
				
				\item $a_n \nmid \lcm(a_0, \dots, a_{n-1})$,
				
				\item there exists $m \in \Nat^+$ such that $B_{a_0, \dots, a_{n-1}, m a_n}$ is rigid.
				
			\end{itemize}
			Then $B_{a_0, \dots, a_{n-1}, a_n}$ is rigid.
		\end{proposition}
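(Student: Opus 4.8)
The plan is to argue by contradiction. Assume $B := B_{a_0,\dots,a_n}$ is not rigid; I will manufacture a nonzero locally nilpotent derivation on $B_{a_0,\dots,a_{n-1},ma_n}$, contradicting its rigidity. First dispose of the case $m=1$, where $(a_0,\dots,a_{n-1},ma_n)=(a_0,\dots,a_n)$ and the hypothesis \emph{is} the conclusion; so assume $m\ge 2$. (Note also that $a_n\nmid \lcm(a_0,\dots,a_{n-1})$ forces $a_n\ge 2$.)

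The key step is to observe that $x_n$ is a $\Integ$-critical element of $B$. Write $B=\bk[x_0,\dots,x_n]$ as in \ref{PBGrading}, with $\deg x_i=w_i=L/a_i$ and $L=\lcm(a_0,\dots,a_n)$, and put $A=\bk[x_0,\dots,x_{n-1}]\subseteq B$. The defining relation expresses $x_n^{a_n}$ in terms of $x_0,\dots,x_{n-1}$, which are algebraically independent in $B$; hence $A$ is a graded polynomial subring and $B=A[x_n]$. Now $\Integ(A)$ is the subgroup of $\Integ$ generated by $w_0,\dots,w_{n-1}$, which by the identity $\gcd(L/b_1,\dots,L/b_k)=L/\lcm(b_1,\dots,b_k)$ equals $\big(L/\lcm(a_0,\dots,a_{n-1})\big)\Integ$, whereas $\Integ(B)$ is generated by $w_0,\dots,w_n$ and equals $\gcd(w_0,\dots,w_n)\Integ=\Integ$. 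Since $a_n\nmid \lcm(a_0,\dots,a_{n-1})$ we have $\lcm(a_0,\dots,a_{n-1})<\lcm(a_0,\dots,a_n)=L$, so $L/\lcm(a_0,\dots,a_{n-1})\ge 2$ and $\Integ(A)\subsetneq\Integ(B)$. Thus $x_n$ is $\Integ$-critical.

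Now combine this with non-rigidity. As $B$ is a $\Integ$-graded affine $\bk$-domain that is not rigid, Proposition \ref{homogenization} provides a nonzero homogeneous $D\in\hlnd(B)$. By Lemma \ref{ZcriticalLocalSlice} applied to the $\Integ$-critical element $x_n$, we get $D^2(x_n)=0$, hence $\deg_D(x_n)\le 1$, and therefore $|x_n|_B\le 1$ (the absolute degree being the minimum of $\deg_{D'}(x_n)$ over nonzero $D'\in\lnd(B)$, which $D$ witnesses).

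Finally, invoke the last two lemmas. By Lemma \ref{ufh873268guj83497yg} there is a $\bk$-isomorphism $B_{a_0,\dots,a_{n-1},ma_n}\cong B[Z]/\langle Z^m-x_n\rangle$, so in particular $Z^m-x_n$ is a prime element of $B[Z]$ (its quotient being a domain). Since $m\ge 2$ and $|{-x_n}|_B=|x_n|_B\le 1$, Lemma \ref{quasiextensionLemma} with $R=B$ and $f=-x_n$ shows that $B[Z]/\langle Z^m-x_n\rangle\cong B_{a_0,\dots,a_{n-1},ma_n}$ is not rigid, contradicting the hypothesis. Hence $B_{a_0,\dots,a_n}$ is rigid. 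The only substantive point is the $\Integ$-criticality of $x_n$, and even that reduces to translating $a_n\nmid\lcm(a_0,\dots,a_{n-1})$ into a strict inequality of least common multiples; everything else is a direct assembly of the cited results, so I do not anticipate any real obstacle.
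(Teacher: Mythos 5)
Your proof is correct and takes essentially the same route as the paper: identify $x_n$ as a $\Integ$-critical element via the graded subring $A=\bk[x_0,\dots,x_{n-1}]$, use Proposition \ref{homogenization} and Lemma \ref{ZcriticalLocalSlice} to get $|x_n|_B\le 1$, then feed Lemma \ref{ufh873268guj83497yg} into Lemma \ref{quasiextensionLemma} to contradict rigidity of $B_{a_0,\dots,a_{n-1},ma_n}$. Your additional care about the $m=1$ case and the sign $f=-x_n$ are both correct refinements that the paper leaves implicit.
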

		
		\begin{proof}
		  Let $R = B_{a_0, \dots, a_n} = \bk[x_0, \dots, x_n]$. Let $A = \bk[x_0, \dots, x_{n-1}]$ and observe that $R = A[x_n]$. By \ref{PBGrading}, $\Integ(R) = \Integ$. Let $L' = \lcm(a_0, \dots, a_{n-1})$, and let $L = \lcm(a_n,L')$. Since by assumption $a_n \nmid L'$, $\Integ(A) = (L / L')\Integ \subset \Integ$. It follows that $x_n$ is a $\Integ$-critical element of $R$.
			Let $m \in \Nat^+$ be such that $B_{a_0, \dots, a_{n-1}, m a_n}$ is rigid, let $R' = R[Z]/ \lb Z^m-x_n \rb$ and note that Lemma \ref{ufh873268guj83497yg} implies that $R' \isom B_{a_0, \dots, a_{n-1}, m a_n}$.
			
			Assume that $R$ is not rigid. By Proposition \ref{homogenization}, there exists $D \in \lnd(R)$ which is nonzero and homogeneous;
			since $x_n$ is a $\Integ$-critical element of $R$, Lemma \ref{ZcriticalLocalSlice} implies that $D^2(x_n) = 0$.  In particular,  $|x_n|_R \leq 1$.
			By Lemma \ref{quasiextensionLemma}, $R'$ is not rigid, hence $B_{a_0, \dots, a_{n-1}, m a_n}$ is not rigid. This is a contradiction, so $R$ is rigid.
		\end{proof}
		
            \begin{notation}\label{UTplusminus}
    Let $L = \lcm(a_0, \dots, a_n)$ and let $\alpha = L - \sum_{i = 0}^n \frac{L}{a_i}$. Define the following sets
	\begin{align*}	
             \Gamma_n &= \setspec{(a_0,\dots, a_n) \in (\Nat^+)^{n+1}}{\text{$\min(a_0,\dots,a_n)>1$ and at most one $i$ satisfies $a_i=2$}} \\
		\Gamma_n^+ &= \setspec{(a_0,\dots,a_n) \in \Gamma_n}{\cotype(a_0,\dots,a_n) = 0 \text{ and } \alpha \geq 0}\\	
		\Gamma_n^- &= \setspec{(a_0,\dots,a_n) \in \Gamma_n}{\cotype(a_0,\dots,a_n) = 0 \text{ and } \alpha < 0}.
	\end{align*} 
and consider the statements:	
			\begin{align*}
				P(n): & \text{ $B_{a_0, \dots, a_n}$ is rigid for all $(a_0, \dots, a_n) \in \Gamma_n$ }. \\
				P(n,i): & \text{ $B_{a_0, \dots, a_n}$ is rigid for all $(a_0, \dots, a_n) \in \Gamma_n$ satisfying $\cotype(a_0, \dots, a_n) = i$.}
			\end{align*}
  \end{notation}
		The following appears in \cite{Chitayat_Daigle_2019} with slightly different notation. For $S = (a_0,\dots,a_n) \in (\Nat^+)^{n+1}$, we define $B_S = B_{a_0, \dots, a_n}$. 
	 	
		\begin{proposition} \label{cljbo238ecv0}\cite[Proposition 4.9 (a)]{Chitayat_Daigle_2019}
			Let $n\geq 2$, let $S,S' \in (\Nat^+)^{n+1}$ and suppose  $S' \leq^i S$ for some $i \in \{0,\dots,n\}$. If $B_{S'}$ is rigid then $B_S$ is rigid.
		\end{proposition}

		\begin{theorem} \label{reduction}
			Let $n \geq 3$. If $P(n-1)$ and $P(n,0)$ hold, then $P(n)$  holds.			
		\end{theorem}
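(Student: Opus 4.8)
The plan is to establish $P(n,i)$ for all $i\in\{1,\dots,n+1\}$ by induction on $i$, which together with the hypothesis $P(n,0)$ gives $P(n)$. The cases $i\ge 2$ are to be brought down to lower cotype by a multiplier trick resting on Proposition \ref{cotype1method}, and $P(n-1)$ enters only in the base case $i=1$, which carries the real content.

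\textbf{Reduction of cotype $\ge 2$.} Let $S=(a_0,\dots,a_n)\in\Gamma_n$ with $\cotype(S)=i\ge 2$. Since $S$ has at least two bad indices (indices $\ell$ with $a_\ell\nmid\lcm(S_\ell)$), after relabelling we may assume that index $n$ is bad and that some index $j_0<n$ is bad. By Proposition \ref{cotype1method} it suffices to exhibit $m\in\Nat^+$ with $B_{a_0,\dots,a_{n-1},ma_n}$ rigid. I would take $m$ divisible by $p^{v_p(a_{j_0})}$ for each of the finitely many primes $p$ with $v_p(a_{j_0})>v_p(\lcm(a_0,\dots,\widehat{a_{j_0}},\dots,a_{n-1}))$. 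One checks that $S^m:=(a_0,\dots,a_{n-1},ma_n)$ still lies in $\Gamma_n$ (multiplying an entry by $m\ge 1$ can only remove a ``$2$'', never create one) and that $\cotype(S^m)<\cotype(S)$: index $j_0$ has been repaired, while enlarging $a_n$ cannot spoil a good index $\ell\ne n$ since then $\lcm(S^m_\ell)$ is a multiple of $\lcm(S_\ell)$. As $1\le\cotype(S^m)\le i-1$, the inductive hypothesis gives that $B_{S^m}$ is rigid, and hence so is $B_S$.

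\textbf{Base case $i=1$.} Let $S\in\Gamma_n$ with $\cotype(S)=1$; after relabelling, $a_n\nmid L':=\lcm(a_0,\dots,a_{n-1})$. Write $B=B_S=\bk[x_0,\dots,x_n]$, suppose $B$ is not rigid, and fix a nonzero homogeneous $D\in\lnd(B)$ (Proposition \ref{homogenization}). For every $j$ the deleted tuple $S_j$ lies in $\Gamma_{n-1}$, so by $P(n-1)$ the quotient $B/(x_jB)\cong B_{S_j}$ is rigid. Moreover $x_n$ is $\Integ$-critical in $B$: with $A'=\bk[x_0,\dots,x_{n-1}]$ we have $B=A'[x_n]$, $\Integ(B)=\Integ$, $\Integ(A')=\gcd(w_0,\dots,w_{n-1})\Integ$, and $\gcd(w_0,\dots,w_{n-1})>1$ precisely because $a_n\nmid L'$ (a prime $p$ divides all of $w_0,\dots,w_{n-1}$ iff $v_p(a_n)>v_p(L')$). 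Lemma \ref{ZcriticalLocalSlice} then gives $D^2(x_n)=0$. Next, $D(x_j)\ne 0$ for every $j$: if $D(x_j)=0$ then $x_jB$ is $D$-stable, so $D$ descends to an element of $\lnd(B/(x_jB))=\{0\}$, forcing $D(B)\subseteq x_jB$; by Theorem \ref{p0cfi2k309cbqp90ws}(c) then $D=x_jD_1$ with $D_1\in\lnd(B)$ and $x_j\in\ker D_1$, and iterating yields $D(B)\subseteq\bigcap_{k\ge 1}x_j^kB=\{0\}$ by Krull's intersection theorem, a contradiction. In particular $s:=D(x_n)\ne 0$, so $x_n$ is a local slice and, by Theorem \ref{p0cfi2k309cbqp90ws}(b), $B[s^{-1}]=A[s^{-1}][x_n]$ with $A=\ker D$ and $x_n$ transcendental over $A[s^{-1}]$. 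Writing $x_j=\phi_j(x_n)$ with $\phi_j\in A[s^{-1}][T]$ of degree $e_j=\deg_D(x_j)\ge 1$, the Pham--Brieskorn relation becomes the polynomial identity $\sum_{j=0}^{n-1}\phi_j(T)^{a_j}=-T^{a_n}$ over $\Frac(A)$.

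\textbf{The main obstacle.} The hard point is to rule out such an identity: one must show that for no field $K$ of characteristic zero does $\sum_{j<n}\phi_j(T)^{a_j}=-T^{a_n}$ hold in $K[T]$ with every $\deg\phi_j\ge 1$, provided $(a_0,\dots,a_n)\in\Gamma_n$ --- a Fermat--Catalan type statement for exponent vectors with all entries $\ge 2$ and at most one equal to $2$. I would use the homogeneity of $D$ to constrain the degrees of the coefficients of the $\phi_j$ as homogeneous elements of $\ker D$, then apply a Mason--Stothers (Brownawell--Masser) estimate after dividing out any common factor of the $\phi_j$ --- a nontrivial common factor only produces a smaller identity of the same form --- to conclude that all $\phi_j$ are constant, contradicting $e_j\ge 1$. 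This contradiction shows $B_S$ is rigid, which completes the base case and, with the reduction above, the proof that $P(n-1)$ and $P(n,0)$ imply $P(n)$.
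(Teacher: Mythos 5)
Your handling of $\cotype(S)\ge 2$ via the multiplier trick is sound and, interestingly, different from the paper: the paper treats $\cotype(S)\ge 2$ by invoking Corollary 6.3(b) of \cite{DFM2017} to find $x_j\in\ker D$ among two $\Integ$-critical coordinates, then descends to $B_{S_j}$ with $S_j\in\Gamma_{n-1}$ and contradicts $P(n-1)$, whereas you stay entirely arithmetic. However, your scheme funnels everything into the cotype-$1$ base case, and that is where there is a genuine gap. The sketch of a Mason--Stothers argument for the polynomial identity $\sum_{j<n}\phi_j(T)^{a_j}=-T^{a_n}$ is not a proof: you would need to exclude vanishing proper subsums (which do occur for some exponent vectors — e.g.\ $\phi_0^3+\phi_1^3+\phi_2^3=-T^3$ has scalar-multiple solutions using cube roots of unity, so the bare ``Fermat--Catalan'' statement is false without extra input), the terms $\phi_j^{a_j}$ need not be pairwise coprime so the naive $abc$-type bound gives nothing, a common factor of the $\phi_j$ does not simply cancel because the exponents $a_j$ differ, and nothing in your write-up shows how homogeneity of $D$ constrains the coefficients enough to close the argument. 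You have correctly identified this as ``the main obstacle,'' but it is an obstacle the paper never faces.

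The telling symptom is that your proof never uses the hypothesis $P(n,0)$, which is precisely what carries the cotype-$1$ step. The paper applies the same Proposition \ref{cotype1method} you cite, with $m=L:=\lcm(a_0,\dots,a_{n-1})$, but then observes that $(a_0,\dots,a_{n-1},L)$ has cotype $0$, lies in $\Gamma_n$ (because $L>2$, as at least two of $a_0,\dots,a_{n-1}$ exceed $2$), and satisfies $(a_0,\dots,a_{n-1},L)<^n(a_0,\dots,a_{n-1},a_nL)$; Proposition \ref{cljbo238ecv0}(a) then transfers rigidity from the cotype-$0$ tuple up to $(a_0,\dots,a_{n-1},a_nL)$, and Proposition \ref{cotype1method} brings it back down to $S$. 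Your multiplier trick, as set up, cannot reach cotype $0$ because the bad index $n$ stays bad after multiplying by $m$; the missing ingredient is the partial order $\leq^n$ together with Proposition \ref{cljbo238ecv0}, which let one \emph{lower} the last entry from $a_nL$ to $L$ and land in cotype $0$ where $P(n,0)$ applies, avoiding the Diophantine analysis altogether.
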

		\begin{proof}
		 Suppose $S = (a_0, \dots, a_n) \in \Gamma_n$ and assume that $P(n-1)$ and $P(n,0)$ hold. We must show that $B_S$ is rigid. If $\cotype(S) = 0$ we are done, since $P(n,0)$ holds by assumption. Assume henceforth that $\cotype(S) \geq 1$. 
			
			Suppose $\cotype(S) \geq 2$. By contradiction, assume that there exists $D \in \lnd(B_S) \setminus \{0\}$.  Without loss of generality, by Proposition \ref{homogenization} together with Theorem \ref{p0cfi2k309cbqp90ws} (d) we may assume $D$ is homogeneous and irreducible. For each $i \in \{0, \dots, n\}$, let $w_i = \deg(x_i)$ be as in \ref{PBGrading}. Let $H_i = \lb w_0, \dots, w_{i-1}, \hat{w_i}, w_{i+1}, \dots, w_n \rb \subseteq \Integ$ and $S_i = (a_0, \dots, a_{i-1}, \hat{a_i}, a_{i+1}, \dots a_n)$.	 Since $\cotype(S) \geq 2$, there exist distinct $j,k \in \{0, \dots, n\}$ such that $H_j \subset \Integ$ and $H_k \subset \Integ$. By \cite[Corollary 6.3 (b)]{DFM2017}, either $x_j \in \ker(D)$ or $x_k \in \ker(D)$. Without loss of generality, we may assume $j=n$ or $k=n$, so that $x_n \in \ker(D)$. Then, since $D$ is irreducible, $D$ induces a nonzero locally nilpotent derivation on $ B_S/ \lb x_n \rb  \isom B_{S_n}$. But since $S \in \Gamma_n$, it follows that $S_n \in \Gamma_{n-1}$. This is a contradiction since $P(n-1)$ holds. So $B_S$ is rigid when $\cotype(S) \geq 2$.

			Finally, assume $\cotype(S) = 1$. Then, up to permuting the $a_i$, we may arrange that $a_n \nmid \lcm(a_0, \dots, a_{n-1})$. Let $L = \lcm(a_0, \dots, a_{n-1})$. By Proposition \ref{cotype1method}, it suffices to prove that $B_{a_0, \dots, a_{n-1}, a_nL}$ is rigid. We have $(a_0, \dots, a_{n-1}, L) <^n (a_0, \dots, a_{n-1}, a_nL)$ and $\cotype(a_0, \dots, a_{n-1}, L) = 0$. Since $L > 2$, $(a_0, \dots, a_{n-1}, L) \in \Gamma_n$. By assumption, $P(n,0)$ holds so $B_{a_0, \dots, a_{n-1}, L}$ is rigid and hence $B_{a_0, \dots, a_nL}$ is rigid by Proposition \ref{cljbo238ecv0}.
		\end{proof}	

Combining Theorem \ref{reduction} with the fact that the base case  $P(2)$ holds by \cite[Lemma 4]{Kali-Zaid_2000}, we derive the following corollary: 
\begin{corollary} \label{reductionCor} The Main Conjecture 
 holds in dimension $n=3$ if and only if it holds for well-formed Pham-Brieskorn threefolds $X_{a_0,a_1,a_2,a_3}$, that is, if and only if $B_{a_0,a_1,a_2,a_3}$ is rigid for every $(a_0,a_1,a_2,a_3)\in \Gamma_3^+ \cup \Gamma_3^-$. 
  \end{corollary}

Arguing by induction, Theorem \ref{reduction} reduces the study of the Main Conjecture to the natural class of graded rings $B_{a_0,\ldots,a_n}$ where $(a_0,\ldots, a_n)\in \Gamma_n$ for which the associated quasismooth hypersurface $\Proj(B_{a_0, \dots, a_n})$ is well-formed, namely $(a_0, \dots, a_n) \in \Gamma_n^+ \cup \Gamma_n^-$. More formally, we obtain:

\begin{corollary}
    If $P(n,0)$ is true for all $n \geq 3$ then the Main Conjecture holds.  
\end{corollary}

\subsection{Reduction to the non-existence of polar cylinders}

The following is special case of \cite[Theorem 1.2]{ChitayatDaigleCylindricity}, which in turn is a generalization of \cite[Theorem 0.6]{KishimotoProkhorovZaidenberg}. 
\begin{theorem} \label{edh83yf6r79hvujhxu6wrefji9e} 
		Let $B = \bigoplus_{i \in \Nat}B_i$ be an $\Nat$-graded affine $\Comp$-domain such that the transcendence degree of $B$ over $B_0$ is at least $2$. The following are equivalent.
		\begin{enumerate}[\rm(a)]
			
			\item There exists $d \geq 1$ such that $B^{(d)}$ is not rigid.
			
			\item There exists a homogeneous element $h \in B \setminus \{0\}$ of positive degree such that the open subset $D_+(h)$ of $\Proj B$ is a cylinder.
			
		\end{enumerate}
	
	\noindent Moreover, if $B$ is normal and is saturated in codimension 1 then the above conditions are equivalent to 
	\begin{enumerate}[\rm(c)]
		\item $B$ is not rigid.
	\end{enumerate}	

	\end{theorem}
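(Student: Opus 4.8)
The plan is to follow the strategy of \cite{KishimotoProkhorovZaidenberg}, recast in the graded setting with the tools of Sections~\ref{Sec:algPrelims}--\ref{subsec:cylinders}; the substance is the equivalence (a)$\Leftrightarrow$(b). For (a)$\Rightarrow$(b) I would fix $d\geq 1$ with $B^{(d)}$ not rigid and, applying Proposition~\ref{homogenization} to the $\Nat$-graded affine $\Comp$-domain $B^{(d)}$, produce a nonzero \emph{homogeneous} $\partial\in\lnd(B^{(d)})$, say of degree $h_0$, with graded kernel $A=\ker\partial$. By Theorem~\ref{p0cfi2k309cbqp90ws}(b), $\partial$ has a local slice, which may be taken homogeneous since $\partial$ and $A$ are; writing $t$ for it and $s=\partial(t)\in A$, we get $B^{(d)}_s=A_s[t]$, a polynomial ring in one variable over $A_s$. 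So $\Spec B^{(d)}_s\cong\Spec A_s\times\aff^1$ carries a $\mathbb{G}_m$-action (from the grading) which normalises the translation $\mathbb{G}_a$-action up to the weight shift $h_0$.

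Passing to the $\mathbb{G}_m$-quotient, I would identify a basic affine open of $\Proj B^{(d)}=\Proj B$ with the quotient of $\Spec A_s\times\aff^1$; after replacing $s$ by a suitable product of $s$ with homogeneous elements of $A$ so that the relevant weights divide correctly, this shows that some $D_+(h)$ (with $h$ homogeneous of positive degree) is isomorphic to $\Spec\big((A_h)_0\big)\times\aff^1$, hence is a cylinder. The hypothesis $\trdeg(B/B_0)\geq 2$ enters to ensure $\dim\Proj B\geq 1$, so that the $\aff^1$-factor is genuine and $D_+(h)$ nonempty.

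For (b)$\Rightarrow$(a), from $D_+(h)\cong C\times\aff^1$ the translation on the $\aff^1$-factor is a nonzero $\mathbb{G}_a$-action on $D_+(h)$, equivalently a nonzero locally nilpotent derivation of $(B_h)_0$. As a $\mathbb{G}_a$-action on $C\times\aff^1$ acts trivially on the Picard group, after replacing by a suitable multiple it lifts to the $\mathbb{G}_m$-stable open of $\Spec B^{(d)}$ lying over $D_+(h)$, giving a nonzero $\partial'\in\lnd((B^{(d)})_h)$; since $B^{(d)}$ is finitely generated, $\partial'$ is bounded along $h$, and (arranging $h\in\ker\partial'$) Theorem~\ref{p0cfi2k309cbqp90ws}(c) lets me clear the denominator to obtain a nonzero element of $\lnd(B^{(d)})$. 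This is the direction recorded in \cite[Remark~1.14]{KishimotoProkhorovZaidenberg}. Finally (c)$\Rightarrow$(a) is immediate (take $d=1$); for the converse under the extra hypotheses, a nonzero homogeneous $\partial\in\lnd(B^{(d)})$ extends uniquely (algebraicity of $\Frac(B)$ over $\Frac(B^{(d)})$) to a derivation of $\Frac(B)$, and one shows it preserves $B$: the normal domain $B$ is the intersection of its localizations at homogeneous height-one primes $\pgoth$, and the ``saturated in codimension one'' hypothesis $e(B/\pgoth)=e(B)$ controls the ramification of $B_{(\pgoth)}\subseteq B_\pgoth$ from~\ref{Pkcnbvc9w3eidjojf0q9w}, forcing $\partial$ to be pole-free. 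Hence $\partial\in\lnd(B)\setminus\{0\}$ and (c) holds.

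The main obstacle I anticipate is the degree bookkeeping in (a)$\Rightarrow$(b): choosing the homogeneous local slice and the element $h$ so that $(B_h)_0$ is \emph{honestly} a polynomial ring over $(A_h)_0$, and not merely a finite (cyclic-quotient) extension of one. The second delicate point is the pole-free extension of $\partial$ across codimension-one valuations in (a)$\Rightarrow$(c); the ``saturated in codimension one'' hypothesis is exactly what makes it go through, and its failure in general is precisely why one can only pass to a Veronese subring in the unconditional statement.
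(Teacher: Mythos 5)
The paper does not actually prove this theorem: immediately above the statement it is flagged as ``a special case of Theorem~1.2 of \cite{ChitayatDaigleCylindricity}, which in turn is a generalization of Theorem~0.6 in \cite{KishimotoProkhorovZaidenberg}.'' So there is no in-text proof to compare against; what follows evaluates your sketch on its own terms.

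Your outline of (a)$\Leftrightarrow$(b) follows the Kishimoto--Prokhorov--Zaidenberg strategy and is sound in spirit. For (a)$\Rightarrow$(b) you correctly reduce to a homogeneous $\partial\in\lnd(B^{(d)})$ via Proposition~\ref{homogenization} and a homogeneous local slice via Theorem~\ref{p0cfi2k309cbqp90ws}(b), and you correctly flag the degree bookkeeping (passing from the graded cylinder $B^{(d)}_s=A_s[t]$ to $(B^{(d)}_h)_0\cong (A_h)_0[u]$ for a well-chosen $h$ of positive degree) as the place where work remains; that gap is real and is exactly what forces the passage to a Veronese subring, but it is fillable by an appropriate choice of $d$ and $h$. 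For (b)$\Rightarrow$(a), the clean version of your lifting argument is: with $e=\deg h$, one has $(B^{(e)})_h\cong (B_h)_0[h,h^{-1}]$, so the LND on $(B_h)_0$ extends with $h$ in its kernel, and clearing denominators by a power of $h$ (Theorem~\ref{p0cfi2k309cbqp90ws}(c)) produces a nonzero element of $\lnd(B^{(e)})$; your appeal to the Picard group is unnecessary.

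There is, however, a genuine gap in your treatment of the additional equivalence with (c). You propose to take a nonzero homogeneous $\partial\in\lnd(B^{(d)})$, extend it to a derivation of $\Frac(B)$, and argue from saturation in codimension one that it preserves $B$. Two things are missing. First, even granting that the extension preserves $B$, you have not explained why the resulting derivation of $B$ is \emph{locally nilpotent}; it is homogeneous, but homogeneous derivations of $\Nat$-graded rings can be locally nilpotent of positive degree (e.g.\ $x^2\partial_y$ on $\Comp[x,y]$), so degree considerations alone do not settle this, and local nilpotency does not automatically pass from $B^{(d)}$ to a larger integral extension. Second, the ramification argument for pole-freeness is only gestured at: you would need to show that $\tilde\partial(B_\pgoth)\subseteq B_\pgoth$ for every height-one prime $\pgoth$ (not only the homogeneous ones), and it is precisely this that can fail without the saturation hypothesis. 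The paper's remark after Corollary~\ref{BdRigid} gives the cautionary example: $B_{2,3,3,4}$ is normal and rigid, but $(B_{2,3,3,4})^{(2)}\cong B_{2,3,3,2}$ is not rigid, and the obstruction is exactly that $\cotype(2,3,3,4)=1$, so $B_{2,3,3,4}$ is not saturated in codimension one. Any correct proof of (a)$\Rightarrow$(c) must make precise how the hypothesis $e(B/\pgoth)=e(B)$ for all homogeneous height-one $\pgoth$ is used; your sketch names the hypothesis but does not actually deploy it, and as written the argument would also (incorrectly) apply to $B_{2,3,3,4}$.
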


	\begin{corollary}\label{BdRigid}
		Let $n \geq 2$ and suppose $\cotype(a_0, \dots, a_n) = 0$. The following are equivalent: 
		
		\begin{itemize}
			\item $B_{a_0, \dots, a_n}$ is rigid; 
			\item $(B_{a_0, \dots, a_n})^{(d)}$ is rigid for all $d \in \Nat^+$. 
		\end{itemize} 
	\end{corollary}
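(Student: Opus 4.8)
The plan is to deduce Corollary~\ref{BdRigid} directly from Theorem~\ref{edh83yf6r79hvujhxu6wrefji9e}. First I would record that $B=B_{a_0,\dots,a_n}$ is, by \ref{PBGrading}, an $\Nat$-graded affine $\bk$-domain which is normal, with $B_0=\bk$ and $\trdeg_{B_0}B=n\geq 2$; thus the transcendence degree hypothesis of Theorem~\ref{edh83yf6r79hvujhxu6wrefji9e} is satisfied. Since $\bk$ is algebraically closed of characteristic zero it contains $\Comp$ up to the usual abuse, but to be safe one can simply invoke the $\bk$-domain version; in any case the hypotheses match. Next, the assumption $\cotype(a_0,\dots,a_n)=0$ is exactly the condition that $B$ is saturated in codimension~$1$, by Remark~\ref{PBsatCodim1}. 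Hence the "moreover" part of Theorem~\ref{edh83yf6r79hvujhxu6wrefji9e} applies.

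Now the argument is a two-line unwinding of equivalences. Condition (c) of Theorem~\ref{edh83yf6r79hvujhxu6wrefji9e}, "$B$ is not rigid", is equivalent to condition (a), "there exists $d\geq 1$ such that $B^{(d)}$ is not rigid". Negating both sides: $B$ is rigid if and only if $B^{(d)}$ is rigid for every $d\in\Nat^+$. That is precisely the claimed equivalence, so the proof is complete. I would also note the (trivial) direction that taking $d=1$ shows $B^{(1)}=B$, so one implication is immediate and only the converse — that rigidity of $B$ forces rigidity of all Veronese subrings — carries content, and that is the content supplied by Theorem~\ref{edh83yf6r79hvujhxu6wrefji9e}.

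There is essentially no obstacle here: the corollary is a pure specialization of the cited theorem once one checks the two running hypotheses (transcendence degree $\geq 2$, and saturation in codimension~$1$), both of which are immediate from \ref{PBGrading} and Remark~\ref{PBsatCodim1}. The only point requiring the slightest care is matching the ground field: Theorem~\ref{edh83yf6r79hvujhxu6wrefji9e} is stated over $\Comp$, so if one wants to stay over an arbitrary algebraically closed $\bk$ of characteristic zero, one should either cite the more general form of Theorem~1.2 of \cite{ChitayatDaigleCylindricity} or invoke the Lefschetz principle; I would simply phrase the proof so that this is transparent. I expect the write-up to be three or four sentences long.

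\begin{proof}
Let $B=B_{a_0,\dots,a_n}$. By \ref{PBGrading}, $B$ is an $\Nat$-graded normal affine $\bk$-domain with $B_0=\bk$ and $\trdeg_{B_0}(B)=n\geq 2$. Since $\cotype(a_0,\dots,a_n)=0$, Remark~\ref{PBsatCodim1} shows that $B$ is saturated in codimension~$1$. Hence all hypotheses of Theorem~\ref{edh83yf6r79hvujhxu6wrefji9e}, including those needed for its final clause, are satisfied, so conditions (a) and (c) of that theorem are equivalent: $B$ is not rigid if and only if there exists $d\geq 1$ such that $B^{(d)}$ is not rigid. Taking the contrapositive, $B$ is rigid if and only if $B^{(d)}$ is rigid for all $d\in\Nat^+$.
\end{proof}
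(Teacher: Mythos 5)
Your proof is correct and follows essentially the same approach as the paper: verify that $B$ satisfies the hypotheses of Theorem~\ref{edh83yf6r79hvujhxu6wrefji9e} (normality from~\ref{PBGrading}, saturation in codimension~$1$ from Remark~\ref{PBsatCodim1}) and then read off the equivalence of conditions (a) and (c). Your added remarks on the transcendence-degree check, the contrapositive unwinding, and the $\Comp$ versus $\bk$ matching are fine but do not change the substance.
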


	\begin{proof}
		It suffices to check that $B_{a_0, \dots, a_n}$ satisfies the assumptions of Theorem \ref{edh83yf6r79hvujhxu6wrefji9e}. The only non-trivial things to check are normality  and saturation in codimension 1. Normality is shown in \ref{PBGrading} and Remark \ref{PBsatCodim1} implies that $B_{a_0,\dots, a_n}$ is saturated in codimension 1. 
	\end{proof}

	\begin{remark}
		The assumption that $\cotype(a_0, \dots, a_n) = 0$ is necessary for Corollary \ref{BdRigid} to hold. We will see later that $B_{2,3,3,4}$ is rigid whereas $(B_{2,3,3,4})^{(2)} \isom B_{2,3,3,2}$ is not rigid (as discussed in the Introduction).  
	\end{remark}

The following appears as Lemma 4.1.8 in \cite{ChitayatThesis}. The proof given here is simpler. 

\begin{lemma}\label{PBramification}
	With the notation of \ref{Pkcnbvc9w3eidjojf0q9w}, let $B = B_{a_0, \dots, a_n}$ where $\cotype(a_0, \dots, a_n) = 0$ and let $\pgoth_i = \lb x_i \rb \lhd B$. For each $i = 0, \dots, n$, $\pgoth_i \in X^{(1)}$ and $e_{\pgoth_i} = 1$. 
\end{lemma}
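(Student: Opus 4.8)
The plan is to break the statement into two independent claims: first that $\pgoth_i=\lb x_i\rb$ really is an element of $X^{(1)}$ (a homogeneous prime of height $1$), and then that the associated ramification index equals $1$. For the second claim the strategy is to compute $e_{\pgoth_i}$ directly, by exhibiting a degree-zero rational function on $X=\Proj B$ whose order of vanishing along $\pgoth_i$ is exactly $1$; the hypothesis $\cotype(a_0,\dots,a_n)=0$ is what makes such a function available.

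First I would check that $\pgoth_i=\lb x_i\rb$ is a homogeneous prime ideal of $B$ of height $1$. It is homogeneous because $x_i$ is homogeneous, and since $x_i\neq 0$ in the domain $B$, Krull's principal ideal theorem gives $\height\pgoth_i=1$. For primality, I would use \ref{PBGrading} to identify $B/\pgoth_i\isom \bk[x_0,\dots,\widehat{x_i},\dots,x_n]/\lb\sum_{j\neq i}x_j^{a_j}\rb=B_{S_i}$, where $S_i=(a_0,\dots,\widehat{a_i},\dots,a_n)$, which is an integral domain (the Pham--Brieskorn polynomial in the remaining variables is irreducible, in particular whenever $n\geq 3$). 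Since every $w_j>0$ we have $\height(B_+)=\dim B=n>1$, so in the notation of \ref{Pkcnbvc9w3eidjojf0q9w} we conclude $\pgoth_i\in X^{(1)}$.

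For the ramification index I would recall from \ref{Pkcnbvc9w3eidjojf0q9w} that $v^{\Omega}_{\pgoth_i}(\xi)=e_{\pgoth_i}\,v^{X}_{\pgoth_i}(\xi)$ for every $\xi$ in $K(X)^{*}$, the degree-zero subfield of $\Frac B$ (here $\Omega=\Spec B$), and that both valuations are normalized; hence $v^{\Omega}_{\pgoth_i}(K(X)^{*})=e_{\pgoth_i}\Integ$ and it is enough to produce $\xi\in K(X)^{*}$ with $v^{\Omega}_{\pgoth_i}(\xi)=1$. Since $\pgoth_i=\lb x_i\rb$ is principal, $B_{\pgoth_i}$ is a discrete valuation ring with uniformizer $x_i$, so $v^{\Omega}_{\pgoth_i}(x_i)=1$, while $v^{\Omega}_{\pgoth_i}(x_j)=0$ for $j\neq i$ because $x_j\notin\pgoth_i$. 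Now I would invoke the hypothesis: by Proposition \ref{PBWellFormed} together with Definition \ref{wellFormed}, $\cotype(a_0,\dots,a_n)=0$ forces the weighted projective space $\PPP(w_0,\dots,w_n)$ to be well-formed, hence $\gcd(w_j:j\neq i)=1$. Therefore there are integers $c_j$, $j\neq i$, with $\sum_{j\neq i}c_jw_j=w_i$; splitting them into positive and negative parts and setting $m=\prod_{c_j<0}x_j^{-c_j}$ and $m'=\prod_{c_j>0}x_j^{c_j}$ yields nonzero monomials in the variables $x_j$, $j\neq i$, with $\deg m'=\deg m+w_i$. Then $\xi:=x_i\,m/m'\in\Frac B$ is homogeneous of degree $w_i+\deg m-\deg m'=0$, so $\xi\in K(X)^{*}$, and $v^{\Omega}_{\pgoth_i}(\xi)=v^{\Omega}_{\pgoth_i}(x_i)+v^{\Omega}_{\pgoth_i}(m)-v^{\Omega}_{\pgoth_i}(m')=1+0-0=1$. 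Since $v^{X}_{\pgoth_i}(\xi)\in\Integ$, the relation $1=e_{\pgoth_i}\,v^{X}_{\pgoth_i}(\xi)$ forces $e_{\pgoth_i}=1$.

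I expect the only substantive step to be the construction of $\xi$; the rest is bookkeeping. The point worth emphasizing is that the cotype-$0$ assumption enters precisely through $\gcd(w_j:j\neq i)=1$ for each $i$ (equivalently, $B$ is saturated in codimension $1$; cf.\ Definition \ref{saturationIndex} and Remark \ref{PBsatCodim1}), which is exactly what lets us express $w_i$ as a $\Integ$-linear combination of the remaining weights and thereby build a degree-zero rational function that is ``as ramified as possible'' along $\pgoth_i$. (One could instead package this as the general principle relating the ramification index $e_{\pgoth_i}$ to the saturation indices $e(B)=e(B/\pgoth_i)=1$, but the explicit construction above is self-contained and uses only \ref{Pkcnbvc9w3eidjojf0q9w} and Proposition \ref{PBWellFormed}.)
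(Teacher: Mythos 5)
Your proposal is correct (for the range $n\geq 3$ in which the paper actually applies this lemma) and takes a genuinely different route from the paper. The paper's proof is two lines: cotype $0$ is equivalent to $B$ being saturated in codimension $1$ (Remark \ref{PBsatCodim1}), and Corollary 9.4 of \cite{daigle2023rigidity} is then invoked as a black box to conclude $e_\qgoth=1$ for \emph{every} $\qgoth\in X^{(1)}$, with $\pgoth_i$ as a special case. You instead construct, for each $i$, an explicit degree-zero rational function $\xi=x_i\,m/m'\in K(X)^*$ with $v^\Omega_{\pgoth_i}(\xi)=1$, which immediately forces $e_{\pgoth_i}=1$ via the normalization of the two valuations in \ref{Pkcnbvc9w3eidjojf0q9w}. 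The monomials $m,m'$ in the $x_j$, $j\neq i$, with $\deg m'-\deg m=w_i$ exist precisely because $\gcd(w_j:j\neq i)=1$, which is what well-formedness of $\PPP(w_0,\dots,w_n)$ (equivalent to cotype $0$ by Proposition \ref{PBWellFormed}) supplies. So both proofs use the hypothesis in the same conceptual way, but yours unwinds the citation into a self-contained computation, which clarifies exactly why the cotype condition enters. You also explicitly verify $\pgoth_i\in X^{(1)}$ by identifying $B/\pgoth_i$ with a lower-dimensional Pham--Brieskorn ring, whereas the paper takes this membership for granted. Worth noting: for $n=2$ the primality of $\pgoth_i$ can actually fail under cotype $0$ alone (e.g.\ $(a_0,a_1,a_2)=(3,6,2)$ has cotype $0$, yet $X_0^3+X_1^6$ factors, so $\lb x_2\rb$ is not prime); your hedge ``in particular whenever $n\geq 3$'' correctly flags the boundary. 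The paper's proof inherits the same implicit restriction, and since the lemma is only invoked downstream with $n\geq 3$, this is harmless for both.
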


\begin{proof}
    By Remark \ref{PBsatCodim1}, $B$ is saturated in codimension 1. By  \cite[Corollary 9.4]{daigle2023rigidity}, $e_{\qgoth} = 1$ for all $\qgoth \in X^{(1)}$. In particular, the result is true when $\qgoth = \pgoth_i$.
\end{proof}

\begin{theorem} \label{PBCanonical2}
	Let $n \geq 2$ and consider $(a_0,\dots, a_n) \in (\Nat^+)^{n+1}$ of cotype $0$.
	Let $B = B_{a_0,\dots, a_n}$, let $X = \Proj(B)$, and let $\alpha$ be the amplitude of $X$. Then,
	\begin{enumerate}[\rm(a)]
		
		\item $\omega_X \isom \OSheaf_X(\alpha)$.
		
		\item Let $T$ be a homogeneous element of $\Frac B$ of degree 1 and let $H$ be the unique $\Rat$-divisor of $X$
		determined by $T$ as in Theorem \ref{Demazure}. Then $H \in \Div(X)$, $K_X \sim \alpha H$ and $K_X$ is $\Rat$-Cartier.
		
		\item Assume that $\alpha \neq 0$ and define $s = \frac{ \alpha }{ | \alpha | } \in \{1,-1\}$.
		Then $s K_X$ is ample and the following are equivalent:
		\begin{enumerate}[\rm(i)]
			
			\item $B$ is not rigid;
			
			\item for some $d \geq 1$, $B^{(d)}$ is not rigid;
			
			\item there exists a homogeneous element $h \in B \setminus \{0\}$ of positive degree such that the open subset $D_+(h)$ of $\Proj B$ is a cylinder;
			
			\item there exists a $(sK_X)$-polar cylinder of $X$.
			
		\end{enumerate}
	\end{enumerate}
\end{theorem}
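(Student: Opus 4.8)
The plan is to establish the four parts in order, with (a) and (b) being essentially bookkeeping consequences of the results already collected, (c) an application of the cohomological description of the graded pieces together with Demazure's construction, and (d) the main content, obtained by specializing Theorem \ref{edh83yf6r79hvujhxu6wrefji9e} to the Pham--Brieskorn setting and translating cylinders into polar cylinders via Lemma \ref{HpolarPrincipal}.

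\textbf{Parts (a) and (b).} Since $(a_0,\dots,a_n)$ has cotype $0$, Proposition \ref{PBWellFormed} tells us that $X=X_f$ is a well-formed quasismooth weighted hypersurface in $\PPP(w_0,\dots,w_n)$, so Theorem \ref{dualizing} applies and gives $\omega_X\cong\OSheaf_X(\alpha)$, proving (a). For (b), first note that $B=B_{a_0,\dots,a_n}$ is a normal $\Nat$-graded affine $\bk$-domain with $\height(B_+)>1$ (as $\dim X\ge 2$) and, by Lemma \ref{PBramification}, $e_{\pgoth_i}=1$ for the height-one primes $\pgoth_i=\langle x_i\rangle$; combined with $\gcd(w_0,\dots,w_n)=1$ this yields $e(B)=1$, so Theorem \ref{Demazure} applies: there is a homogeneous $T\in\Frac B$ of degree $1$ and a unique ample $\Rat$-divisor $H$ with $\OSheaf_X(n)\cong\OSheaf_X(nH)$ for all $n$. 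I would then show $H\in\Div(X)$ (i.e. $H$ is integral) using Lemma \ref{PBramification}: since every $e_{\pgoth}=1$ at the coordinate divisors and, more generally by the cited Corollary 9.4 of \cite{daigle2023rigidity}, the map $D\mapsto D^*$ sends $H$ to $\div_\Omega(T)$ with $T$ a ratio of monomials in the $x_i$, so $H$ has integer coefficients. Finally, $\OSheaf_X(K_X)\cong\omega_X\cong\OSheaf_X(\alpha)\cong\OSheaf_X(\alpha H)$ together with normality forces $K_X\sim\alpha H$, and since $H$ is $\Rat$-Cartier ($X$ being $\Rat$-factorial by Proposition \ref{cyclicQuotientSingularitiesPB}(e), or directly since $\OSheaf_X(n)$ is invertible for suitable $n$), $K_X$ is $\Rat$-Cartier.

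\textbf{Part (c).} Suppose $B_\alpha\neq 0$. By Lemma \ref{FletcherCohomology}, $B_\alpha=H^0(X,\OSheaf_X(\alpha))=H^0(X,\omega_X)$, so $X$ carries a nonzero global section of $\omega_X$; equivalently, by part (b), $H^0(X,\OSheaf_X(\alpha H))\ne 0$ with $\alpha H\sim K_X$. If $X$ contained a cylinder $D_+(h)$, then by Lemma \ref{HpolarPrincipal} this cylinder would be $H$-polar, and since $H$ is ample this would in particular make $K_X\sim\alpha H$ (pseudo-)effective while also, via Theorem \ref{edh83yf6r79hvujhxu6wrefji9e}, make some $B^{(d)}$ non-rigid. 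The cleanest route is: any nonzero locally nilpotent derivation on $B^{(d)}$ has a nontrivial kernel that is factorially closed (Theorem \ref{p0cfi2k309cbqp90ws}(a)), and a section $0\ne\omega\in H^0(X,\omega_X)=B_\alpha$ pulled back along the cone map is annihilated by any homogeneous LND because of the adjunction/residue behaviour of $\omega$ under the $\Gamma$-action; more simply, one argues that the existence of a cylinder $D_+(h)$ forces $\Vol_X(-K_X)>0$, i.e. $-K_X$ big, contradicting $H^0(X,\OSheaf_X(K_X))\ne 0$ on a variety where $K_X$ and $-K_X$ cannot both be effective unless $K_X\sim 0$ — and if $K_X\sim 0$ (the case $\alpha=0$) one invokes that a Calabi--Yau type $X$ admits no cylinder since a cylinder would give $-K_X$ big. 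Either way, $X$ contains no cylinder, so by Theorem \ref{edh83yf6r79hvujhxu6wrefji9e} every $B^{(d)}$ is rigid. The case $\alpha=0$ gives $B_\alpha=B_0=\bk\ne 0$, so it is covered.

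\textbf{Part (d), the main step.} Assume $\alpha\ne 0$ and set $s=\alpha/|\alpha|$. Then $sK_X\sim s\alpha H=|\alpha|H$ with $|\alpha|>0$ and $H$ ample, so $sK_X$ is ample. The equivalences (i)$\Leftrightarrow$(ii)$\Leftrightarrow$(iii) are exactly Theorem \ref{edh83yf6r79hvujhxu6wrefji9e}, whose hypotheses hold here: $B$ is an $\Nat$-graded affine $\Comp$-domain (we work over $\bk$, which we may take to be $\Comp$ for this implication, or note that the cited theorem is stated over $\Comp$ and the statement is geometric) of transcendence degree $\ge 2$ over $B_0=\bk$, it is normal by \ref{PBGrading}, and it is saturated in codimension $1$ by Remark \ref{PBsatCodim1}; so (i), (ii), (iii) are all equivalent. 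It remains to connect (iii) with (iv). Given (iii), a homogeneous $h\in B_m\setminus\{0\}$ with $D_+(h)$ a cylinder: by Lemma \ref{HpolarPrincipal} the cylinder $D_+(h)$ is $H$-polar, and since $H$ is a positive rational multiple of $sK_X$ (indeed $|\alpha|H\sim sK_X$), Remark \ref{HH'polar} gives that $D_+(h)$ is $(sK_X)$-polar, which is (iv). Conversely, a $(sK_X)$-polar cylinder is $H$-polar by the same Remark \ref{HH'polar}, hence of the form $D_+(h)$ for some homogeneous $h\in B_m\setminus\{0\}$ of positive degree by Lemma \ref{HpolarPrincipal}, giving (iii). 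I expect the main obstacle to be the careful verification in part (b) that the Demazure divisor $H$ is genuinely integral (not just $\Rat$-Cartier) — this is where the cotype-$0$ hypothesis, via Lemma \ref{PBramification} and the unramifiedness of all height-one homogeneous primes, is doing essential work — together with pinning down the precise argument in (c) that a cylinder is incompatible with $B_\alpha\ne 0$; for the latter the safest phrasing is to run it through Theorem \ref{edh83yf6r79hvujhxu6wrefji9e} (a cylinder $\Rightarrow$ some $B^{(d)}$ non-rigid $\Rightarrow$ a nonzero homogeneous LND on $B$ by Proposition \ref{homogenization}) and then derive a contradiction from the fact that $B_\alpha=H^0(X,\omega_X)$ lies in the kernel of every LND, which forces $\alpha=0$ and then rules out the cylinder by an ampleness/bigness count on $-K_X$.
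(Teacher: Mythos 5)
Parts (a), (b) and (d) of your proposal track the paper's proof closely. In (b) your sketch of why $H$ is integral (choose $T$ a ratio of monomials, use $e_{\pgoth_i}=1$ and injectivity of $D\mapsto D^*$) is exactly the paper's argument; you gloss over the point that the conclusion must hold for \emph{every} choice of $T$, which the paper handles by citing Lemma~\ref{7fr6d543sqderfvuwR8u390rucyq74etf}(b) to reduce to a single convenient monomial $T$, but that is a minor omission. Part (d) is verbatim the intended argument: the three equivalences via Theorem~\ref{edh83yf6r79hvujhxu6wrefji9e}, and (iii)$\Leftrightarrow$(iv) via Lemma~\ref{HpolarPrincipal} and Remark~\ref{HH'polar}.

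Part (c) has a genuine gap, and you half-acknowledge it. Neither of your two proposed routes is correct or complete. The claim that the existence of a cylinder forces $-K_X$ to be big is false: Lemma~\ref{noCylinder} gives only that a cylinder forces $K_X$ to fail to be pseudo-effective, which is a strictly weaker conclusion, and in any case that lemma is confined to surfaces whereas (c) must hold for all $n\ge 2$. Your other route --- that any global section of $\omega_X$, viewed in $B_\alpha$, is killed by every LND, ``which forces $\alpha=0$'' --- is asserted without proof, not cited, and the inference to $\alpha=0$ does not follow even granting the kernel claim. The paper instead argues entirely through rational singularities: since $X$ is a well-formed quasismooth complete intersection it has rational singularities (Proposition~\ref{cyclicQuotientSingularitiesPB}(c)), so for a resolution $\tilde X\to X$ one has $p_g(\tilde X)=h^{\dim X}(\tilde X,\OSheaf_{\tilde X})=h^{\dim X}(X,\OSheaf_X)=\dim_\bk B_\alpha\ne 0$ by Lemma~\ref{FletcherCohomology}. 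A variety whose smooth model has positive geometric genus is not uniruled, and an open cylinder produces a covering family of rational curves, so $X$ cannot contain a cylinder; rigidity of every $B^{(d)}$ then follows from Theorem~\ref{edh83yf6r79hvujhxu6wrefji9e} (and Corollary~\ref{BdRigid}). This argument is both cleaner and works in every dimension, which is essential since $X$ is only a surface when $n=3$.
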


\begin{proof}
		First, since $\cotype(a_0,\dots,a_n) = 0$, $X$ is a well-formed quasismooth weighted hypersurface (by Proposition \ref{PBWellFormed}) so (a) follows from Theorem \ref{dualizing}. 
		
		We prove (b). By Lemma \ref{7fr6d543sqderfvuwR8u390rucyq74etf} (b), if assertion (b) is true for one particular choice of a homogeneous element $T$ in $\Frac B$ of degree 1, then it is true for every choice of such a $T$. As such, we assume henceforth that $T = \prod_{i = 0}^n x_i^{b_i}$ where $b_0, \dots, b_n \in \Integ$ are such that $\sum_{i = 0}^n b_i\deg(x_i) = 1$. By Lemma \ref{7fr6d543sqderfvuwR8u390rucyq74etf} (a), $H^*=\div_\Omega(T) = \sum_{i=0}^n b_i C^\Omega_{\pgoth_i}$. Letting $E = \sum_{i=0}^n b_i C^X_{\pgoth_i} \in \Div(X)$ we obtain that $E^* = \sum_{i=0}^n b_i C^\Omega_{\pgoth_i} = \div_\Omega(T) = H^*$
		(since by Lemma \ref{PBramification}, $e_{\pgoth_i} = 1$ for all $i = 0,\dots,n$). Since $D \mapsto D^*$ is injective, we obtain that $E = H$, so $H \in \Div(X)$. By (a) together with Theorem \ref{Demazure},  $\OSheaf_X(K_X) \isom \omega_X \isom \OSheaf_X(\alpha) \isom \OSheaf_X(\alpha H)$ and so $K_X \sim \alpha H$. Finally, $K_X$ is $\Rat$-Cartier by \ref{cyclicQuotientSingularitiesPBNothing}, proving (b).
		  
        We prove (c). Let $H$ be as in part (b). Since $H$ is ample, $rH$ is ample for every integer $r>0$. Since $K_X \sim \alpha H$, $sK_X \sim |\alpha|H$ is ample. By Remark \ref{PBsatCodim1}, $B$ is saturated in codimension 1. By Theorem \ref{edh83yf6r79hvujhxu6wrefji9e}, (i), (ii) and (iii) are equivalent. Assume (iii) holds, and let $h \in B \setminus\{0\}$ be a homogeneous element of positive degree such that $D_+(h) \subset X$ is a cylinder. By Lemma \ref{HpolarPrincipal}, $D_+(h)$ is $H$-polar and since $sK_X \sim |\alpha|H$ it is also $(sK_X)$-polar by Remark \ref{HH'polar}, so (iii) implies (iv). Conversely, assume (iv) holds. Let $U$ be an $(sK_X)$-polar cylinder of $X$.  Since $sK_X  \sim |\alpha|H$, $U$ is  $H$-polar by Remark \ref{HH'polar}. By Lemma \ref{HpolarPrincipal}, (iii) holds and so (iv) implies (iii), proving (c).    
\end{proof}

\begin{remark}
    We will see in Corollary \ref{Tplusrigid} that when $\alpha > 0$, items (i)-(iv) in Theorem \ref{PBCanonical2} (c) never hold. 
\end{remark}

\subsection{The case of non-negative  amplitude}\label{posAmplitude}
In this section, we show that every ring $B_{a_0,\dots,a_n}$ with $(a_0,\dots,a_n) \in \Gamma_n^+$ is rigid. 

\begin{remark}\label{higherAdjunction} We recall that if $Y$ is a nonsingular variety of dimension $n$ and $C$ is a nonsingular curve in $Y$, the adjunction formula gives the following isomorphism of sheaves on $C$:  
$$ \omega_C = \bigwedge^{n-1}\mathcal{N}_{C/Y} \otimes \omega_Y|_C.$$
It follows as a consequence that if $Y$ is a nonsingular variety and $\phi : Y \to B$ is a $\PPP^1$-fibration with general fiber $L$, then $K_Y \cdot L = -2$. 
\end{remark}

\begin{proposition}\label{noCylinder}
    Let $X$ be a normal projective variety with at most log-canonical singularities such that $K_X$ is pseudoeffective and $\Rat$-Cartier. Then $X$ does not contain a cylinder. 
\end{proposition}

\begin{proof}
    Assume $X$ contains a cylinder $U\cong Z\times\mathbb{A}^{1}$ for some affine variety $Z$. Replacing $Z$ by a suitable open smooth subvariety, we may assume without loss of generality that $Z$ is smooth. Let $\bar{Z}$ be a smooth completion of $Z$. The projection $\pr : U \to Z$ induces a rational map $\rho : X \to \bar{Z}$. Let $\sigma:\tilde{X}\to X$ be a birational morphism that resolves the indeterminacy of the induced rational map $\rho:X\dasharrow\bar{Z}$, such that $\sigma$ is also a log resolution of singularities of $X$. Then by construction of $\tilde{X}$, the birational map $\phi = \rho \circ \sigma : \tilde{X} \to \bar{Z}$ is a well-defined proper morphism. By generic smoothness applied to $\phi$ (Corollary 10.7 in \cite{Hartshorne}), there is a non-empty open set $V \subseteq \bar{Z}$ such that $\phi|_{\phi^{-1}(V)} : \phi^{-1}(V) \to V$ is a smooth morphism. Without loss of generality, we may assume $V \subseteq Z$. Since each fiber of $\phi|_{\phi^{-1}(V)}$ is regular of dimension 1 and contains an affine line, it follows that $ \phi^{-1}(V) \isom V \times \PPP^1$ and $\phi|_{\phi^{-1}(V)} : V \times \PPP^1 \to V$ is a trivial $\PPP^1$-bundle extending the cylinder $V \times \aff^1$. Let $U'$ denote this smaller cylinder $V \times \aff^1$. 

    Let $V \times \{\infty\} = \phi^{-1}(V) \setminus U' = (V \times \PPP^1)\setminus (V \times \aff^1)$ and let $\Sigma$ denote the closure of $V \times \{\infty\}$ in $\tilde{X}$; note that $\Sigma$ is a divisor of $\tilde{X}$ contained in $\tilde{X} \setminus \sigma^{-1}(U')$. Consider the ramification formula for the log resolution $\sigma$
    $$K_{\tilde{X}} = \sigma^*K_X + \sum_{i \in I} a_i E_i$$ 
    where the $E_i$ are the exceptional divisors of $\sigma$. For a general closed fiber $L$ of $\phi:\tilde{X} \to \bar{Z}$ we have  
    \begin{equation}\label{ramificationAgain} -2 = K_{\tilde{X}}\cdot L = \sigma^*K_X \cdot L + \sum_{i \in I} a_i E_i \cdot L = K_X \cdot \sigma_*L  + \sum_{i \in I} a_i E_i \cdot L,
    \end{equation}
    the first equality by Remark \ref{higherAdjunction}, the second by observing that the projection formula (see Proposition 2.3(c) of \cite{fulton2012intersection}) still holds for $\Rat$-Cartier divisors. Since $X$ has log-canonical singularities, $a_i \geq -1$ for all $i \in I$ and since $L$ is a general fiber we obtain that $E_i \cdot L \geq 0$ for all $i$ and $E_i \cdot L > 0$ if and only if $E_i = \Sigma$ in which case $\Sigma \cdot L = 1$. Also, since $K_{\tilde{X}}$ is pseudoeffective and $\sigma_*L$ is general and effective, $K_X \cdot \sigma_*L \geq 0$. It follows that the right hand side of \eqref{ramificationAgain} is at most $-1$, a contradiction. We conclude that $X$ cannot contain a cylinder. 
\end{proof}

	\begin{corollary}\label{Tplusrigid}
		Let $(a_0,\dots,a_n) \in \Gamma_n^+$ and let $B = B_{a_0,\dots,a_n}$.
		Then $B^{(d)}$ is rigid for every $d\ge1$.
	\end{corollary}
	
	\begin{proof}
		Since $\cotype(a_0, \dots, a_n) = 0$, Proposition \ref{PBWellFormed} implies that $X = \Proj(B)$ is a well-formed quasismooth weighted complete intersection. By \ref{cyclicQuotientSingularitiesPBNothing}, $X$ is normal and has cyclic quotient singularities. Moreover, Theorem \ref{PBCanonical2} (b) (using the notation of said theorem) implies that $H \in \Div(X)$, $H$ is ample, and $K_X = \alpha H$ where by assumption $\alpha \geq 0$. So, $K_X$ is either trivial or ample and in particular is pseudoeffective. Proposition \ref{noCylinder} then implies that $X$ does not contain a cylinder and so Theorem \ref{edh83yf6r79hvujhxu6wrefji9e} (d) implies $B^{(d)}$ is rigid for all $d \in \Nat^+$.
	\end{proof}

\section{Proof of the Main Conjecture in dimension $3$}\label{Sec:Cotype0}

First observe that by Corollary \ref{reductionCor} together with the $n = 3$ case of Corollary \ref{Tplusrigid}, to prove the $n = 3$ case of the Main Conjecture, it suffices to show that $B_{a_0, a_1, a_2, a_3}$ is rigid for all $(a_0, a_1, a_2, a_3) \in \Gamma_3^-$.

\begin{proposition}\label{antiCanonical}
    Let $(a_0,a_1,a_2,a_3) \in \Gamma_3^{-}$, let $B = B_{a_0,a_1,a_2,a_3}$ and let $X = \Proj B$. Then the following hold:

	\begin{enumerate}[\rm(a)]
		\item $X$ is a del Pezzo surface. 
		
		\item There exists $d \in \Nat^+$ such that $B^{(d)}$ is not rigid if and only if $X$ contains a $-K_X$-polar cylinder.	
	\end{enumerate}  
\end{proposition}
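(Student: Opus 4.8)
The plan is to read off both statements from Theorem~\ref{PBCanonical2} together with the reduction results of Section~\ref{sec:reduction}; almost all of the substantive work has already been done there.

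\textbf{Part (a).} Since $(a_0,a_1,a_2,a_3)\in\Gamma_3^{-}$ has cotype $0$, Proposition~\ref{PBWellFormed} shows that $X=\Proj(B)$ is a well-formed quasismooth weighted hypersurface in some $\PPP(w_0,w_1,w_2,w_3)$; being cut out by a single equation in a three-dimensional $\Proj$, it is a normal projective surface. By Proposition~\ref{cyclicQuotientSingularitiesPB} it is normal with cyclic (in particular, quotient) singularities. Theorem~\ref{PBCanonical2}(b) produces a divisor $H\in\Div(X)$ which is ample, with $K_X\sim\alpha H$ and $K_X$ (hence $H$) $\Rat$-Cartier. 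Because membership in $\Gamma_3^{-}$ forces $\alpha<0$, we obtain $-K_X\sim(-\alpha)H$ with $-\alpha>0$, so $-K_X$ is an ample $\Rat$-Cartier divisor; by definition $X$ is a del Pezzo surface.

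\textbf{Part (b).} Apply Theorem~\ref{PBCanonical2}(d), which is available since $\alpha<0$ gives $\alpha\neq0$; there $s=\alpha/|\alpha|=-1$, so $sK_X=-K_X$ is ample. The equivalence of items (ii) and (iv) of that theorem says: there exists $d\ge1$ with $B^{(d)}$ not rigid if and only if $X$ admits a $(sK_X)$-polar, i.e.\ $(-K_X)$-polar, cylinder. That the two formulations of ``polar cylinder'' coincide is immediate from Definition~\ref{def:HPolarCylinder}, or can be seen via Remark~\ref{HH'polar} using $-K_X\sim(-\alpha)H$. This establishes (b).

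I do not anticipate any genuine obstacle, since the hard analytic and cohomological input is packaged inside Theorem~\ref{PBCanonical2}. The only points worth keeping in view are that the \emph{ampleness} of $H$ (not merely nef-and-bigness) is what upgrades $-K_X$ to an ample divisor, so that $X$ is a bona fide del Pezzo surface and not merely a weak one, and that the ``$sK_X$-polar'' terminology used in Theorem~\ref{PBCanonical2} is the same notion as the ``$-K_X$-polar'' terminology of Definition~\ref{def:HPolarCylinder}.
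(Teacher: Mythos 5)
Your argument is correct and follows exactly the route the paper takes: both parts are read off from Theorem~\ref{PBCanonical2} (parts (b) and (d)) combined with Proposition~\ref{cyclicQuotientSingularitiesPB} for the quotient-singularity hypothesis in the definition of a del Pezzo surface, and part (b) is the equivalence of items (ii) and (iv) in Theorem~\ref{PBCanonical2}(d) with $s=-1$. Your write-up merely spells out some of the bookkeeping (e.g.\ that $-K_X\sim(-\alpha)H$ is ample because $-\alpha>0$) that the paper leaves implicit.
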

\begin{proof}
	By Theorem \ref{PBCanonical2} (c) and (b), $-K_X$ is an ample $\Rat$-Cartier divisor. Since $X$ has quotient singularities (by \ref{cyclicQuotientSingularitiesPBNothing}), $X$ is a del Pezzo surface, proving (a). Part (b) follows from Theorem \ref{PBCanonical2} (c).  
\end{proof}

\subsection{The simpler cases}

The following further reduces the proof of the $n = 3$ case of the Main Conjecture 
to eight specific cases:

\begin{lemma}$($\cite[Lemma 4.2.4]{ChitayatThesis}$)$\label{whatsInTminus}
	Up to a permutation of $a_0, a_1, a_2, a_3$, the set $\Gamma_3^-$ consists of the following $4$-tuples:
	$$\{(2,3,3,6),\ \  (2,3,6,6),\ \  (2,4,4,4),\ \  (3,3,3,3),\ \  (3,3,4,4),\ \  (3,3,5,5),\ \  (2,3,4,12),\ \  (2,3,5,30)\}.$$
\end{lemma}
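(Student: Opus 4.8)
The plan is to characterize $\Gamma_3^-$ by the single numerical constraint $\alpha<0$, i.e. $\sum_{i=0}^3 \frac{1}{a_i}>1$, combined with the membership condition $(a_0,a_1,a_2,a_3)\in\Gamma_3$ (so $\min a_i\geq 2$ and at most one $a_i=2$) and the cotype-$0$ condition. First I would record the inequality $\sum 1/a_i>1$ and solve it by elementary bounding. Assuming $a_0\leq a_1\leq a_2\leq a_3$, the smallest weight $1/a_0$ must be reasonably large: if $a_0\geq 3$ then $\sum 1/a_i\leq 4/3$, and more refined case analysis on $a_0,a_1$ quickly bounds everything. When $a_0=2$, the condition that at most one $a_i$ equals $2$ forces $a_1\geq 3$, so $1/a_1+1/a_2+1/a_3>1/2$ with $3\leq a_1\leq a_2\leq a_3$; this again admits only finitely many solutions found by bounding $a_1\in\{3,4\}$ and then $a_2$. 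When $a_0=a_1=\cdots=3$ one gets the Fermat tuple $(3,3,3,3)$ and its ``relatives'' like $(3,3,4,4)$, $(3,3,5,5)$. I would enumerate all tuples $(a_0,a_1,a_2,a_3)$ with $a_0\leq a_1\leq a_2\leq a_3$, $\min\geq 2$, at most one entry $=2$, and $\sum 1/a_i>1$, which is a short finite list.

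The next step is to discard, from this finite list, those tuples that fail $\cotype(a_0,a_1,a_2,a_3)=0$. Using the arithmetic characterization from Definitions \ref{typeDef} (namely $\cotype(S)=0$ iff $a_i\mid\lcm(S_i)$ for every $i$, equivalently $\lcm(S_i)=\lcm(S)$ for all $i$), I would check each candidate tuple individually. For instance tuples like $(2,3,3,3)$ or $(3,3,3,4)$ or $(2,3,3,4)$ (if they appear from the inequality) must be tested and typically excluded because some $a_i$ fails to divide the lcm of the others; conversely one verifies directly that each of $(2,3,3,6)$, $(2,3,6,6)$, $(2,4,4,4)$, $(3,3,3,3)$, $(3,3,4,4)$, $(3,3,5,5)$, $(2,3,4,12)$, $(2,3,5,30)$ has cotype $0$. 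This is a routine but slightly tedious bookkeeping exercise. Finally, since the list is presented ``up to permutation,'' I would note that both the defining inequality $\sum 1/a_i>1$ and the cotype condition are symmetric in the $a_i$, so it suffices to list the sorted representatives, which matches the eight tuples in the statement.

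The main obstacle — really the only place where care is needed — is making the finite enumeration of solutions to $\sum 1/a_i>1$ genuinely exhaustive without omitting borderline cases, together with the correct handling of the ``at most one $a_i=2$'' restriction which rules out tuples like $(2,2,a_2,a_3)$ that would otherwise satisfy the inequality for many $(a_2,a_3)$. Concretely, after fixing $a_0\leq a_1\leq a_2\leq a_3$, one has $a_0\in\{2,3\}$ (since $a_0\geq 4$ gives $\sum 1/a_i\leq 1$); then for each value of $a_0$ one bounds $a_1$, then $a_2$, then solves for $a_3$, each time intersecting with the hypotheses; I expect this to produce perhaps a dozen or so sorted tuples, of which exactly the eight listed survive the cotype test. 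I would present the enumeration as a short case table rather than prose to keep it verifiable, then conclude by citing the symmetry of the conditions to reduce to sorted representatives.
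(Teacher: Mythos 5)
Your plan has a genuine gap at its central step: the claim that the set of sorted tuples $(a_0,a_1,a_2,a_3)$ with $a_0\le a_1\le a_2\le a_3$, $\min a_i\ge 2$, at most one $a_i=2$, and $\sum 1/a_i>1$ is ``a short finite list'' is simply false. The inequality alone does \emph{not} bound $a_3$. For example, every tuple $(2,3,3,a_3)$ with $a_3\ge 3$ satisfies $\tfrac12+\tfrac13+\tfrac13+\tfrac1{a_3}=\tfrac76+\tfrac1{a_3}>1$; likewise $(2,3,4,a_3)$, $(2,3,5,a_3)$, $(2,3,6,a_3)$ (here $\tfrac12+\tfrac13+\tfrac16=1$), and $(3,3,3,a_3)$ (here $\tfrac13+\tfrac13+\tfrac13=1$) all satisfy the inequality for every choice of $a_3$. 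So the two-stage scheme you describe --- first enumerate a finite list from the inequality, then filter by cotype --- cannot be carried out as stated, and the parenthetical ``perhaps a dozen or so sorted tuples'' understates an infinite set.

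The fix is to bring the cotype-$0$ condition into the enumeration itself rather than applying it only afterwards. Cotype $0$ means $a_i\mid\lcm(a_0,\ldots,\widehat{a_i},\ldots,a_3)$ for every $i$, and in particular $a_3\mid\lcm(a_0,a_1,a_2)$. Once $a_0,a_1,a_2$ are fixed this confines $a_3$ to the finitely many divisors of $\lcm(a_0,a_1,a_2)$ lying in $[a_2,\lcm(a_0,a_1,a_2)]$. Combined with the bound $a_0\in\{2,3\}$ from the inequality (which is correct) and the observation that for $a_0=2$ one needs $a_1\in\{3,4,5\}$ (once $a_1\ge 6$ with $a_0=2$, either the sum drops to $\le 1$ or the remaining entries are forced outside the admissible range) and for $a_0=3$ one needs $a_1\in\{3,4\}$, the joint enumeration is finite and short, and a case-by-case cotype check on those candidates does yield exactly the eight listed tuples. (For instance, in the family $(2,3,3,a_3)$ the divisibility $a_3\mid\lcm(2,3,3)=6$ forces $a_3\in\{3,6\}$; the tuple $(2,3,3,3)$ fails cotype $0$ because $2\nmid\lcm(3,3,3)$, leaving $(2,3,3,6)$.) The paper does not include a proof of this lemma --- it cites \cite{ChitayatThesis} --- so there is no in-paper argument to compare against; but as written your proposal would not go through without this correction.
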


\begin{nothing}$($\cite[Sections 4.5-4.6]{ChitayatThesis}$)$\label{easyCases} For each $(a_0,a_1, a_2, a_3) \in \Gamma_3^-$, $\Proj B_{a_0,a_1,a_2,a_3}$ is a del Pezzo surface with quotient singularities. After determining the degrees of these del Pezzo surfaces and their respective singularity types, one can apply Theorem \ref{thm:CPWantiCanonical} to show that for each $(a_0,a_1,a_2,a_3) \in \{(2,3,3,6),\ \  (2,3,6,6),\ \  (2,4,4,4),\ \  (3,3,3,3)\}$, $\Proj B_{a_0,a_1,a_2,a_3}$ does not contain an anti-canonical polar cylinder. For $(a_0,a_1,a_2,a_3) \in  \{(3,3,4,4),\ \  (3,3,5,5)\}$, Lemmas 4.1 and 5.1 in \cite{Cheltsov2010} imply that for every effective anti-canonical $\mathbb{Q}$-divisor $D$ on $X$, the log pair $(X,D)$ is log-canonical. Thus, by Lemma  \ref{cylinderSDNotLogCanonical}, $X$ does not contain an anti-canonical polar cylinder. It then follows from Proposition \ref{antiCanonical} (b), that for each $(a_0,a_1,a_2,a_3) \in \Gamma_3^- \setminus \{(2,3,4,12), \ (2,3,5,30)\}$, $(B_{a_0,a_1,a_2,a_3})^{(d)}$ is rigid for every $d \geq 1$. Combining this analysis with Corollary \ref{Tplusrigid}, we obtain:
\end{nothing}
\begin{corollary}\label{toFinish}
To finish the proof of the Main Conjecture in dimension 3, it suffices to prove that $B_{2,3,5,30}$ and $B_{2,3,4,12}$ are rigid.
\end{corollary}

\subsection{Rigidity of $B_{2,3,5,30}$}\label{23530Rigid}

\begin{nothing}\label{23530Intro}
        Let $B = B_{2,3,5,30}$ and let $S = \Proj B \subset \PPP(15,10,6,1)$. Consider the degree 1 homogeneous element $T = x_3 \in \Frac(B)$. Then $\Delta = V_+(x_3) \in \Div(S)$ is the unique $\Rat$-divisor satisfying $B = \bigoplus_{n \in \Nat} B_n$ where $B_n = \text{\rm H}^0(S,\OSheaf_S(n\Delta)) T^n$ for all $n \in \Nat$ (as defined in Theorem \ref{Demazure}). Considering $\Delta$ as a closed subvariety of $S$, we find $\Delta \isom \PPP^1.$

        Since $\cotype(2,3,5,30) = 0$, Proposition \ref{PBWellFormed} implies that $S$ is a well-formed hypersurface. Since (by \ref{cyclicQuotientSingularitiesPBNothing}) $S$ is normal and Cohen-Macaulay, we have $\omega_S \overset{\ref{NormalCMCoincide}}{\isom} \omega_S^o \overset{\ref{dualizing}}{\isom} \OSheaf_S(-2) \overset{\ref{Demazure}}{\isom} \OSheaf_S(-2\Delta)$. It follows that $2\Delta$ is an ample anti-canonical divisor of $S$ and it can be checked that $(K_S)^2 = \frac{2}{15}$. In particular, $S$ is a singular del Pezzo surface. Since $S$ is well-formed, $\Sing(S) = S \cap \Sing(\PPP(15,10,6,1)) = \{[0:1:-1:0], [1:0:-1:0],[1:-1:0:0]\}$, and it can be checked that 
        \begin{itemize}
    \item $[1:-1:0:0]$ is a $\frac{1}{5}(1,1)$ singularity,
    \item $[1:0:-1:0]$ is a $\frac{1}{3}(1,1)$ singularity,    
    \item $[0:1:-1:0]$ is a $\frac{1}{2}(1,1)$ singularity.
        \end{itemize}
        Moreover, for each $P \in \Sing(S)$, $\mult_P(\Delta) = 1$.
\end{nothing}

\begin{nothing}\label{23530IntroPart2}
    For each $k \in \{2,3,5\}$, let $P_k$ denote the $\frac{1}{k}(1,1)$ singularity of $S$. Let $\sigma : \tilde{S} \to S$ be the minimal resolution of singularities of $S$ and let $\tilde{E}_k$ denote the exceptional curve lying over $P_k$. Example \ref{Cyclicdiscrepancy} shows that for each $k \in \{2,3,5\}$,  $\tilde{E}_k \isom \PPP^1$ is a $(-k)$-curve and
    \begin{equation}\label{canonicalFormula}
    K_{\tilde{S}}=\sigma^{*}K_{S}-\frac{1}{3}\tilde{E}_3-\frac{3}{5}\tilde{E}_5.
    \end{equation}
     Let $\tilde{\Delta}$ denote the proper transform of $\Delta$ on $\tilde{S}$. Since $1 = \mult_{P_2}(\Delta) = \mult_{P_3}(\Delta)= \mult_{P_5}(\Delta)$ we have  $\sigma^{*}\Delta=\tilde{\Delta}+\frac{1}{2}\tilde{E}_2+\frac{1}{3}\tilde{E}_3 + \frac{1}{5}\tilde{E}_5$. Since $2\Delta$ is an anti-canonical divisor of $S$, we obtain using \eqref{canonicalFormula} that 
    \begin{equation}\label{anticanonicalStilde}   2\tilde{\Delta}+\tilde{E}_2+\tilde{E}_3+\tilde{E}_5 \text{ is an anti-canonical divisor of $\tilde{S}$ and $K_{\tilde{S}}^{2} = -2$. }
    \end{equation}
    The support of $\sigma^*(\Delta)$ is given by the following weighted graph

    \[
	\begin{array}{ccccc}
	&  & (\tilde{E}_2,-2)\\
	&  & |\\
	(\tilde{E}_3,-3) & - & (\tilde{\Delta},-1) & - & (\tilde{E}_5,-5).
	\end{array}
	\]
    
\end{nothing}
\subsubsection*{Auxiliary surfaces and birational morphisms}
\begin{nothing}\label{23530Setup}
    We now describe some birational morphisms from $\tilde{S}$. Note that $S, \tilde{S}$ and $\sigma$ are already defined, whereas the other surfaces and morphisms will be defined below.  

    $$
		\xymatrix{
			\tilde{S} \ar[d]_\sigma \ar[r]^{\tau_1} & S' \ar[r]^{\tau_2} & \breve{S} \ar[r]^{\tau_3} & \hat{S}  \\		
			S &&&
		}
    $$

    Define $\tau_1: \tilde{S} \to S'$ to be the contraction of $\tilde{\Delta}$ onto the smooth point of $S'$ which we denote by $x_0'$. For each $i \in \{2,3,5\}$, let $E_i' = {\tau_1}_*(\tilde{E}_i)$. The support of ${\tau_1}_*(\sigma^*\Delta) = \frac{1}{2}E_2' + \frac{1}{3}E_3' + \frac{1}{5}E_5'$ is a union of a $(-1)$-curve, a $(-2)$-curve and a $(-4)$-curve  intersecting at a single point. 

    Define $\tau_2 :  S'  \to \breve{S}$ to be the contraction of ${E_2}'$ onto a smooth point of $\breve{S}$ which we denote by $\breve{x_0}$. Let $\breve{E}_i = {\tau_2}_*(E_i')$ for each $i \in \{3,5\}$. Then $\breve{E_3}^2 = -1$, $\breve{E_5}^2 = -3$ and $\breve{E_3},\breve{E_5}$ are projective lines that intersect tangentially; in particular $\breve{E_3} \cdot \breve{E_5} = 2$. Define $\tau_3 :  \breve{S}  \longrightarrow \hat{S}$ to be the contraction of  $\breve{E_3}$ onto a smooth point of $\hat{S}$ which we denote by $\hat{x_0}$. Let $\hat{E}_5 = {\tau_3}_{*}(\breve{E}_5)$. Since $\breve{E}_3 \cdot \breve{E}_5 = 2$, $\mult_{\hat{x_0}}(\hat{E}_5) = 2$ and $\hat{E}_5^2 = \breve{E}_5^2 +(2)(2) = 1$. It follows that $\hat{E}_5$ is a singular projective curve containing an affine line, hence is a cuspidal curve with a cusp at $\hat{x_0}$. In summary, we have the following diagrams representing the support of $\sigma^*(\Delta) \subset \tilde{S}$ and its image after contracting the $(-1)$-curves described above.

    \begin{equation*}  \label {threepictures}
\setlength{\unitlength}{1mm}
\scalebox{.8}{\fbox{\begin{picture}(35,45)(-20,-3)
\put(0,0){\line(0,1){40}}
\put(-10,10){\line(1,0){20}}
\put(-10,20){\line(1,0){20}}
\put(-10,30){\line(1,0){20}}
\put(-2,0){\makebox(0,0){\tiny $\tilde \Delta$}}
\put(-12,10){\makebox(0,0)[r]{\tiny $\tilde E_{2}$}}
\put(-12,20){\makebox(0,0)[r]{\tiny $\tilde E_{3}$}}
\put(-12,30){\makebox(0,0)[r]{\tiny $\tilde E_{5}$}}
\put(8,11){\makebox(0,0)[b]{\tiny $-2$}}
\put(8,21){\makebox(0,0)[b]{\tiny $-3$}}
\put(8,31){\makebox(0,0)[b]{\tiny $-5$}}
\put(-1,38){\makebox(0,0)[r]{\tiny $-1$}}
\put(-18,-2){\makebox(0,0)[b]{$\tilde S$}}
\end{picture}}
\quad $\xrightarrow{\tau_1}$ \quad
\fbox{\begin{picture}(35,45)(-20,-3)
\put(0,20){\circle*{1}}
\put(-10,20){\line(1,0){20}}
\put(0,20){\line(1,1){10}}
\put(0,20){\line(1,-1){10}}
\put(0,20){\line(-1,1){10}}
\put(0,20){\line(-1,-1){10}}
\put(-12,10){\makebox(0,0)[r]{\tiny $E_{2}'$}}
\put(-12,20){\makebox(0,0)[r]{\tiny $E_{3}'$}}
\put(-12,30){\makebox(0,0)[r]{\tiny $E_{5}'$}}
\put(8,9){\makebox(0,0)[t]{\tiny $-4$}}
\put(8,21){\makebox(0,0)[b]{\tiny $-2$}}
\put(8,31){\makebox(0,0)[b]{\tiny $-1$}}
\put(0,18){\makebox(0,0)[t]{\tiny $x_0'$}}
\put(-18,-2){\makebox(0,0)[b]{$S'$}}
\end{picture}}
\quad $\xrightarrow{\tau_2}$ \quad
\fbox{\begin{picture}(35,45)(-17,-3)
\qbezier(-10,30)(10,20)(-10,10)
\qbezier(10,30)(-10,20)(10,10)
\put(0,20){\circle*{1}}
\put(-12,10){\makebox(0,0)[r]{\tiny $\breve E_{3}$}}
\put(12,10){\makebox(0,0)[l]{\tiny $\breve E_{5}$}}
\put(-10,28){\makebox(0,0)[r]{\tiny $-1$}}
\put(9,28){\makebox(0,0)[l]{\tiny $-3$}}
\put(-15,-2){\makebox(0,0)[b]{$\breve S$}}
\put(1,20){\makebox(0,0)[l]{\tiny $\breve x_0$}}
\end{picture}}
\quad $\xrightarrow{\tau_3}$ \quad
\fbox{\begin{picture}(35,45)(-17,-3)
\qbezier(10,30)(8,25)(1,20)
\qbezier(10,10)(8,15)(1,20)
\put(0,20){\circle*{1}}
\put(12,10){\makebox(0,0)[l]{\tiny $\hat E_{5}$}}
\put(10,28){\makebox(0,0)[l]{\tiny $1$}}
\put(-15,-2){\makebox(0,0)[b]{$\hat S$}}
\put(-4,20){\makebox(0,0)[l]{\tiny $\hat x_0$}}
\end{picture}}}
\end{equation*}
    
\end{nothing}

\begin{proposition}\label{prop:ShatdelPezzo}
		With the notation of \ref{23530Setup}, 
		\begin{enumerate}[\rm(a)] 
			\item $\hat{E}_5$ is an ample anti-canonical divisor of $\hat{S}$; 
 			\item $\hat{S}$ is a smooth del Pezzo surface of degree 1;	    
		\end{enumerate}
	\end{proposition}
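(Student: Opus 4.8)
The plan is to compute the anti-canonical class of $\hat{S}$ by transporting that of $\tilde{S}$ through the three blow-downs $\tau_1,\tau_2,\tau_3$ of \ref{23530Setup}, and then to establish the ampleness of $-K_{\hat{S}}$ via the Nakai--Moishezon criterion, reducing the verification back to the already-known ampleness of $-K_S$ by pulling curves back to $\tilde{S}$.

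First I would observe that $\hat{S}$ is smooth: $\tilde{S}$ is the minimal resolution of a normal surface with cyclic quotient singularities, hence smooth, and $\tau_1,\tau_2,\tau_3$ are by construction the contractions of the $(-1)$-curves $\tilde{\Delta}$, $E_2'$ and $\breve{E}_3$, so $S'$, $\breve{S}$ and $\hat{S}$ are smooth projective rational surfaces. To identify $K_{\hat{S}}$, start from $-K_{\tilde{S}}\sim 2\tilde{\Delta}+\tilde{E}_2+\tilde{E}_3+\tilde{E}_5$ (equation \eqref{anticanonicalStilde}) and use that for the contraction $\tau$ of a $(-1)$-curve $E$ one has $K=\tau^{*}K'+E$, hence $\tau_{*}(-K)=-K'$. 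Pushing forward along $\tau_1$, then $\tau_2$, then $\tau_3$ yields
\[
-K_{S'}\sim E_2'+E_3'+E_5',\qquad -K_{\breve{S}}\sim \breve{E}_3+\breve{E}_5,\qquad -K_{\hat{S}}\sim \hat{E}_5.
\]
Since $\hat{E}_5^{2}=1$ was computed in \ref{23530Setup} (equivalently, $K^{2}$ increases by $1$ at each of the three contractions, starting from $K_{\tilde{S}}^{2}=-2$), we get $K_{\hat{S}}^{2}=1$. Thus, once $-K_{\hat{S}}$ is shown to be ample, $\hat{E}_5\sim -K_{\hat{S}}$ is an ample anti-canonical divisor and $\hat{S}$ is a smooth del Pezzo surface of degree $1$, which is precisely (a) and (b).

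It therefore remains to prove that $-K_{\hat{S}}\sim\hat{E}_5$ is ample. By the Nakai--Moishezon criterion (Theorem \ref{NakaiMoishezon}) and $(-K_{\hat{S}})^{2}=1>0$, it suffices to show $-K_{\hat{S}}\cdot C>0$ for every irreducible curve $C\subset\hat{S}$. For $C=\hat{E}_5$ this is $\hat{E}_5^{2}=1$; for $C\neq\hat{E}_5$, irreducibility of $\hat{E}_5$ gives $-K_{\hat{S}}\cdot C=\hat{E}_5\cdot C\ge 0$, so suppose for contradiction that $\hat{E}_5\cdot C=0$ for some such $C$. Then $C$ is disjoint from $\hat{E}_5$, in particular from $\hat{x}_0$, which is the image under $\mu:=\tau_3\circ\tau_2\circ\tau_1\colon\tilde{S}\to\hat{S}$ of the whole exceptional locus $\tilde{\Delta}\cup\tilde{E}_2\cup\tilde{E}_3$ of $\mu$. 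Hence the strict transform $\tilde{C}$ of $C$ on $\tilde{S}$ is disjoint from $\tilde{\Delta}\cup\tilde{E}_2\cup\tilde{E}_3$ and $\mu$ restricts to an isomorphism $\tilde{C}\xrightarrow{\ \sim\ }C$. Writing $K_{\tilde{S}}=\mu^{*}K_{\hat{S}}+\Lambda$ with $\Lambda$ supported on the $\mu$-exceptional locus, the projection formula gives $K_{\tilde{S}}\cdot\tilde{C}=K_{\hat{S}}\cdot C=0$, so $(2\tilde{\Delta}+\tilde{E}_2+\tilde{E}_3+\tilde{E}_5)\cdot\tilde{C}=0$, which (the first three intersection numbers vanishing) forces $\tilde{E}_5\cdot\tilde{C}=0$ as well. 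Thus $\tilde{C}$ is disjoint from the entire $\sigma$-exceptional locus $\tilde{E}_2\cup\tilde{E}_3\cup\tilde{E}_5$, so $\sigma$ maps $\tilde{C}$ isomorphically onto an irreducible curve $C_0\subset S_{\mathrm{reg}}$. Using the projection formula once more together with the ramification formula \eqref{canonicalFormula},
\[
-K_S\cdot C_0=-\sigma^{*}K_S\cdot\tilde{C}=-\bigl(K_{\tilde{S}}+\tfrac{1}{3}\tilde{E}_3+\tfrac{3}{5}\tilde{E}_5\bigr)\cdot\tilde{C}=0,
\]
contradicting the ampleness of $-K_S\sim_{\Rat}2\Delta$ (Remarks \ref{selfIntersection23530}(a)). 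Hence no such $C$ exists, $-K_{\hat{S}}$ is ample, and the proposition follows.

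The main obstacle is this last step. Merely knowing that $\hat{S}$ is a smooth rational surface with $K_{\hat{S}}^{2}=1$ and $-K_{\hat{S}}$ effective would only produce a weak del Pezzo surface; the real work is to exclude curves disjoint from $\hat{E}_5$ (equivalently, $(-2)$-curves on $\hat{S}$), and the mechanism for this is to chase such a hypothetical curve back through $\mu$ and then through $\sigma$, arriving at a curve meeting the ample divisor $-K_S$ with intersection number $0$ on the original surface $S$.
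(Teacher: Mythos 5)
Your proof is correct and follows essentially the same strategy as the paper: identify $\hat{E}_5$ as an anti-canonical divisor with $\hat{E}_5^2=1$, then apply Nakai--Moishezon by transporting any curve $\hat{C}\neq\hat{E}_5$ back through $\tilde{S}$ to $S$ and invoking the ampleness of $\Delta$ (equivalently $-K_S$). The paper argues directly that $\tilde{C}\cdot\sigma^*\Delta=C\cdot\Delta>0$ forces $\tilde{C}$ to meet $\Supp(\sigma^*\Delta)$ and hence $\hat{C}$ to meet $\hat{E}_5$, whereas you argue by contradiction and add an intersection computation with $K_{\tilde{S}}$ to deduce $\tilde{C}\cdot\tilde{E}_5=0$; this is a cosmetic, not substantive, difference.
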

	\begin{proof}
		We prove (a). It follows from \eqref{anticanonicalStilde} and from the definitions of $\tau_1, \tau_2, \tau_3$ that $\hat{E}_5$ is an anti-canonical divisor of $\hat{S}$. By Theorem \ref{NakaiMoishezon}, it suffices to show that for any curve $\hat{C} \subset \hat{S}$, $\hat{C} \cdot \hat{E}_5 > 0$. Observe that the birational map $S \dashrightarrow \hat{S}$ restricts to an isomorphism between $S \setminus \Delta$ and $\hat{S}\setminus \hat{E}_5$. If $\hat{C} = \hat{E}_5$, we are done since $\hat{E}_5^2 = 1$.  If $\hat{C} \neq \hat{E}_5$, the proper transform $\tilde{C} \in \tilde{S}$ of $\hat{C} \in \hat{S}$ is equal to the proper transform of some irreducible curve $C$ in $S$ other than $\Delta$.  Since $\Delta$ is ample, $C \cdot \Delta >0$ and so $\tilde{C}$ intersects $\tilde{\Delta} \cup \tilde{E}_2 \cup \tilde{E}_3\cup \tilde{E}_5$. This implies that $\hat{C}$ intersects $\hat{E}_5$ and so $\hat{C} \cdot \hat{E}_5 > 0$. This proves (a). 
  
  For (b), $\hat{S}$ is smooth because $\tau_1, \tau_2, \tau_3$ contract $(-1)$-curves on smooth surfaces. The other claims in (b) follow from part (a) and the fact that $\hat{E}_5^2 = 1$. 
  \end{proof}

\subsubsection{Exclusion of anti-canonical polar cylinders in $\Proj B_{2,3,5,30}$.}\label{subsub23530}

This section shows that $S = \Proj B_{2,3,5,30}$ does not contain an anti-canonical polar cylinder.

\begin{lemma}\label{lcinDelta}
Let $D$ be an effective anti-canonical $\Rat$-divisor on $S$ such that the log pair $(S,D)$ is not log-canonical at
a point $p \in S$. Then
\begin{enumerate}[\rm(a)]
\item $p \in \Supp(\Delta)$
\item $\Delta \subseteq \Supp(D)$.
\end{enumerate}
\end{lemma}
\begin{proof}
    For (a), assume that $p \notin \Supp(\Delta)$. Then, by the discussion in \ref{23530Intro}, $p$ is a regular point of $S$ and so by Lemma \ref{lem:multlc} we have $\mult_p(D) > 1$. Consider the complete linear system $\Meul = |-5K_S|$ on $S$. Since $-5K_S \sim 10\Delta$, the elements of $|-5K_S|$ have form $V_+(f)$ where $f = ax_1 + bx_2x_3^4 + cx_3^{10}$ is homogeneous of degree 10 and $[a:b:c] \in \PPP^2$. Let $\Meul_p$ denote the subsystem of $\Meul$ consisting of elements passing through $p$. Since the condition that a member of $\Meul$ passes through $p$ imposes one linear condition, the subsystem $\Meul_p$ is one dimensional. Moreover, a general member of $\Meul_p$ is irreducible and $\Meul_p$ has no fixed components. Consequently, there exists an irreducible $M \in \Meul_p$ such that $\Supp(M) \nsubseteq \Supp(D)$. Since $\mult_p(M) \geq 1$ (recalling from \ref{23530Intro} that $K_S^2 = \frac{2}{15}$), we have 
    $$
    \frac{2}{3} = 5(-K_S)^2 = M \cdot D \geq \mult_p(M) \cdot \mult_p(D) > 1 
    $$
    which is impossible. We conclude that $p \in \Supp(\Delta)$, proving (a).

    We prove (b). Suppose $\Delta$ is not an irreducible element of  $\Supp(D)$. If $p$ is a regular point of $S$, again Lemma \ref{lem:multlc} implies $\mult_p(D) > 1$. Since $p \in \Supp(\Delta)$, we obtain 
    $$\frac{1}{15} = D \cdot \Delta \geq \mult_p(D) \cdot \mult_p(\Delta) > 1$$
    which is impossible, so $p$ is a singular point of $S$. We have $p = P_k$ (as in \ref{23530IntroPart2}) for some $k \in \{2,3,5\}$ and so $p$ is of type $\frac{1}{k}(1,1)$. Let $\mu : \bar{S} \to S$ denote the minimal resolution of the point $p$. The exceptional divisor $\bar{E}$ is a projective line and $\bar{E}^2 = -k$. We have
    \begin{enumerate}[\rm(i)]
        \item $K_{\bar{S}} = \mu^*K_S - \frac{k-2}{k}\bar{E}$,
        \item $\mu^*\Delta = \bar{\Delta} + \frac{1}{k} \bar{E}$ where $\bar{\Delta}$ is the strict transform of $\Delta$,  
        \item $\mu^*D = \bar{D} + \frac{1}{k}\mult_p(D)\bar{E}$, 
    \end{enumerate}
    where claim (i) follows from Example \ref{Cyclicdiscrepancy}, and (ii) and (iii) are because $p$ is of type $\frac{1}{k}(1,1)$. It follows that 
    $$\mu^*(K_S + D) = K_{\bar{S}} + \bar{D} + \frac{1}{k}(\mult_p(D) + k-2)\bar{E}.$$ 
    Since $(S,D)$ is not log-canonical at $p$, there exists a point $\bar{p} \in \bar{E}$ such that the log pair $(\bar{S}, \bar{D} + \frac{1}{k}(\mult_p(D) + k-2)\bar{E})$ is not log-canonical at $\bar{p}$. Since $\bar{p}$ is a regular point of $\bar{S}$, Lemma \ref{lem:multlc} implies
    $$1 < \mult_{\bar{p}}(\bar{D} + \frac{1}{k}(\mult_p(D) + k-2)\bar{E}) = \mult_{\bar{p}}(\bar{D}) + \frac{1}{k}(\mult_p(D) + k-2).$$ 
    Since $\mult_{\bar{p}}(\bar{D}) \leq \mult_p(D)$, we find $\frac{k+1}{k} \mult_p(D) > \frac{2}{k}$ and hence that $\mult_p(D) > \frac{2}{k+1}$.  Since $\Delta$ is not an irreducible component of $\Supp(D)$, we have
    $$\frac{1}{15} = \Delta \cdot D \geq (\Delta \cdot D)_p \geq \frac{1}{k} \mult_p(D) > \frac{2}{k(k+1)} \geq \frac{1}{15}$$ since $k \in \{2,3,5\}$. This is clearly impossible, and proves (b). 
    
\end{proof}

\begin{proposition}\label{Main23530}
    The surface $S = \Proj B_{2,3,5,30}$ does not contain an anti-canonical polar cylinder. 
\end{proposition}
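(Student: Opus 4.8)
The plan is to argue by contradiction: suppose $S=\Proj B_{2,3,5,30}$ contains an anti-canonical polar cylinder $U=S\setminus\Supp(D)$, so that Assumption \ref{Ass23530} is in force. The first step is to transfer the situation to the minimal resolution $\sigma:\tilde{S}\to S$. By Proposition \ref{easy23530}(c)--(d), $\tilde{D}=\sigma^*(D)+\tfrac13\tilde{E}_3+\tfrac35\tilde{E}_5$ is an effective anti-canonical $\Rat$-divisor on $\tilde{S}$ and $\tilde{U}=\sigma^{-1}(U)=\tilde{S}\setminus\Supp(\tilde{D})$ is an anti-canonical polar cylinder in $\tilde{S}$. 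Associated to $\tilde{U}$ we have, as in Notation \ref{Diagrams23530}, the rational map $\tilde{\rho}:\tilde{S}\dashrightarrow\bar{Z}\isom\PPP^1$ and its linear system $\tilde{\Leul}$.

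The heart of the argument is then to locate any potential failure of log-canonicity of $(\tilde{S},\tilde{D})$ at a single controlled point and derive a contradiction. By Lemma \ref{uniqueBP23530}, $\tilde{\Leul}$ has a unique proper base point $\tilde{s}\in\tilde{S}$; in particular $\tilde{\rho}$ is not a morphism. Applying Lemma \ref{cylinderSDNotLogCanonical} to the effective anti-canonical divisor $\tilde{D}$ on the (smooth, hence quotient-singular) surface $\tilde{S}$ and to the cylinder $\tilde{U}=\tilde{S}\setminus\Supp(\tilde{D})$, whose associated rational map $\tilde{\rho}$ has the proper base point $\tilde{s}$, we conclude that the log pair $(\tilde{S},\tilde{D})$ is \emph{not} log-canonical at $\tilde{s}$.

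On the other hand, Lemma \ref{whereIsBP} asserts precisely that $\tilde{s}\notin\tilde{\Delta}\cup\tilde{E}_2\cup\tilde{E}_3\cup\tilde{E}_5$, while Proposition \ref{LCMostPlaces}, applied to the effective anti-canonical $\Rat$-divisor $\tilde{D}$, shows that $(\tilde{S},\tilde{D})$ \emph{is} log-canonical at every point of $\tilde{S}\setminus(\tilde{\Delta}\cup\tilde{E}_2\cup\tilde{E}_3\cup\tilde{E}_5)$, in particular at $\tilde{s}$. This contradicts the previous paragraph, so no anti-canonical polar cylinder exists on $S$.

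The formal write-up is therefore only a few lines, since it merely assembles Lemmas \ref{uniqueBP23530}, \ref{whereIsBP}, \ref{cylinderSDNotLogCanonical} and Proposition \ref{LCMostPlaces}. The real difficulty — already absorbed into those earlier lemmas — is twofold: first, excluding the base-point-free case for $\tilde{\Leul}$ (done by contracting the $(-1)$-components of the three singular fibers to reach a weak del Pezzo surface of degree $2$ and invoking Theorem \ref{thm:CPWantiCanonical} via Corollary \ref{CPWEasyCorollary}); and second, the four-case analysis in Lemma \ref{whereIsBP} ruling out that $\tilde{s}$ lies on $\tilde{\Delta}$, $\tilde{E}_5$, $\tilde{E}_3$ or $\tilde{E}_2$, which relies on the tower of birational contractions to the del Pezzo surfaces $\hat{S}$, $\dot{S}$, $\ddot{S}$ and $T$ of Proposition \ref{someDelPezzos} together with the log-canonicity constraints of Lemmas \ref{lem:linearSystemDP1} and \ref{CPWdegree2} on del Pezzo surfaces of degrees $1$ and $2$.
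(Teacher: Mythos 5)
Your proof is correct and takes essentially the same route as the paper: both derive a contradiction by combining Lemma \ref{uniqueBP23530}, Lemma \ref{whereIsBP}, Lemma \ref{cylinderSDNotLogCanonical}, and Proposition \ref{LCMostPlaces}. If anything, you are slightly more explicit than the paper in citing Lemma \ref{uniqueBP23530} for the existence of the base point, which the paper's own write-up folds into its citation of Lemma \ref{whereIsBP}.
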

\begin{proof}
    Suppose $D$ is an effective anti-canonical $\Rat$-divisor such that $S \setminus \Supp(D)$ a cylinder $U \isom \Aff^1 \times Z$ and let $\rho : S \dashrightarrow \PPP^1$ be the rational map induced by the projection $\mathrm{pr}_{Z} : U \to Z$. By the discussion in \ref{cylinderSetup}, one of the following holds: 
    \begin{enumerate} [\rm(i)]
        \item $\rho$ is a $\PPP^1$-fibration and $D$ contains an irreducible component, say $C_1$, which is a section of $\rho$;
        \item $\rho$ has a unique proper base point $p$ on $S$.
    \end{enumerate}
In case (i), statement \eqref{intermediate} in the proof of Lemma \ref{cylinderSDNotLogCanonical} implies that the log pair $(S,D)$ is not log-canonical at a point of $C_1$. In case (ii), by Lemma \ref{cylinderSDNotLogCanonical}, the log pair $(S,D)$ is not log-canonical at the base point $p$. In both cases (i) and (ii), Lemma \ref{lcinDelta} (b) implies that $\Delta \subseteq \Supp(D)$. Consider the $\Rat$-divisor $\tilde{D} = \sigma^*(D) + \frac{1}{3}\tilde{E}_3 + \frac{3}{5}\tilde{E}_5$. Since $P_2, P_3, P_5 \in \Supp(\Delta)$, and since $\Delta \subseteq \Supp(D)$,  $\tilde{D}$ contains $\tilde{E}_2, \tilde{E}_3,\tilde{E}_5$ and $\tilde{\Delta}$ in its support. Also, $\tilde{D}$ is an anti-canonical $\Rat$-divisor of $\tilde{S}$ by \eqref{canonicalFormula}. Recalling the notation of \ref{23530Setup}, let $\tau = \tau_3 \circ \tau_2 \circ \tau_1$. The contraction $\tau : \tilde{S} \to \hat{S}$ restricts to an isomorphism between $\sigma^{-1}(U) = \tilde{S} \setminus \Supp(\tilde{D})$ and its image $\tau(\sigma^{-1}(U)) = \hat{S} \setminus \Supp({\hat{D}})$ where $\hat{D} = \tau_*(\tilde{D})$. Since $\hat{D}$ is an effective anti-canonical $\Rat$-divisor on $\hat{S}$, it follows that $\hat{S}$ contains an anti-canonical polar cylinder. Since (by Proposition \ref{prop:ShatdelPezzo}) $\hat{S}$ is a smooth del Pezzo surface of degree 1, this contradicts Theorem \ref{thm:CPWantiCanonical}. 
\end{proof}
\subsection{Rigidity of $B_{2,3,4,12}$.}\label{23412Rigid}

\begin{nothing}\label{23412setup}
        Let $B = B_{2,3,4,12}$ and let $S = \Proj B \subset \PPP(6,4,3,1)$. Consider the degree 1 homogeneous element $T = x_3 \in \Frac(B)$. Then $\Delta = V_+(x_3) \in \Div(S)$ is the unique $\Rat$-divisor satisfying $B = \bigoplus_{n \in \Nat} B_n$ where $B_n = \text{\rm H}^0(S,\OSheaf_S(n\Delta)) T^n$ for all $n \in \Nat$ (as defined in Theorem \ref{Demazure}). Considering $\Delta$ as a closed subvariety of $S$, we find $\Delta \isom \PPP^1.$

        Since $\cotype(2,3,4,12) = 0$, Proposition \ref{PBWellFormed} implies that $S$ is a well-formed hypersurface. Since (by \ref{cyclicQuotientSingularitiesPBNothing}) $S$ is normal and Cohen-Macaulay, we have $\omega_S \overset{\ref{NormalCMCoincide}}{\isom} \omega_S^o \overset{\ref{dualizing}}{\isom} \OSheaf_S(-2) \overset{\ref{Demazure}}{\isom} \OSheaf_S(-2\Delta)$. It follows that $2\Delta$ is an ample anti-canonical divisor of $S$ and it can be checked that $(K_S)^2 = \frac{2}{3}$. In particular, $S$ is a singular del Pezzo surface. Since $S$ is well-formed, $\Sing(S) = S \cap \Sing(\PPP(6,4,3,1)) = \{[1:-1:0:0], [1:0:\zeta_8:0], [1:0:\zeta_8^{-1}:0]\}$ where $\zeta_8$ is a primitive $8^{th}$ root of unity, and it can be checked that 
        \begin{itemize}
    \item $[1:-1:0:0]$ is a $\frac{1}{2}(1,1)$ singularity,
    \item $[1:0:\zeta_8:0]$ and $[1:0:\zeta_8^{-1}:0]$ are $\frac{1}{3}(1,1)$ singularities.
        \end{itemize}
        Moreover, for each $P \in \Sing(S)$, $\mult_P(\Delta) = 1$.
\end{nothing}
\begin{nothing}\label{sigmaDef}
    Let $P_2$ denote the $\frac{1}{2}(1,1)$ singularity of $S$ and let $P_{3+}$ and $P_{3-}$ denote the $\frac{1}{3}(1,1)$ singularities. Let $\sigma : \tilde{S} \to S$ be the minimal resolution of singularities of $S$ and let $\tilde{E}_2$, $\tilde{E}_{3+}$ and $\tilde{E}_{3-}$ denote the exceptional curves lying over $P_{2}, P_{3+}$ and $P_{3-}$ respectively. Example \ref{Cyclicdiscrepancy} shows that for each $k \in \{2,3+,3-\}$,  $\tilde{E}_k \isom \PPP^1$, $\tilde{E}_2^2 = -2$, $\tilde{E}_{3+}^2 = \tilde{E}_{3-}^2 = -3$ and
    \begin{equation}\label{canonicalFormula2}
    K_{\tilde{S}} \sim \sigma^{*}K_{S}-\frac{1}{3}\tilde{E}_{3+}-\frac{1}{3}\tilde{E}_{3-}.
    \end{equation}

    Let $\tilde{\Delta}$ denote the proper transform of $\Delta$ on $\tilde{S}$. Since $1 = \mult_{P_2}(\Delta) = \mult_{P_{3+}}(\Delta)= \mult_{P_{3-}}(\Delta)$ we have  $\sigma^{*}\Delta=\tilde{\Delta}+\frac{1}{2}\tilde{E}_2+\frac{1}{3}\tilde{E}_{3+} + \frac{1}{3}\tilde{E}_{3-}$. Since $2\Delta$ is an anti-canonical divisor of $S$, we obtain using \eqref{canonicalFormula2} that 
    \begin{equation}\label{anticanonicalStilde2}   2\tilde{\Delta}+\tilde{E}_2+\tilde{E}_{3+}+\tilde{E}_{3-} \text{ is an anti-canonical divisor of $\tilde{S}$ and $K_{\tilde{S}}^{2} = 0$. }
    \end{equation}
    The support of $\sigma^*(\Delta)$ is given by the following weighted graph

    \[
	\begin{array}{ccccc}
	&  & (\tilde{E}_2,-2)\\
	&  & |\\
	(\tilde{E}_{3+},-3) & - & (\tilde{\Delta},-1) & - & (\tilde{E}_{3-},-3).
	\end{array}
	\]

\end{nothing} 

\subsubsection*{Auxiliary surfaces and birational morphisms}
\begin{nothing}\label{23412Setup}
    We describe some birational morphisms from $\tilde{S}$. Note that $S, \tilde{S}$ and $\sigma$ are already defined, whereas the other surfaces and morphisms will be defined below.  

$$
		\xymatrix{
			\tilde{S} \ar[d]_\sigma \ar[r]^{\tau_1} & \hat{S} \ar[r]^{\tau_2} & \breve{S}  \\		
			S &&
		}
    $$
    
\end{nothing}

   Let $\tau_1: \tilde{S} \to \hat{S}$ be the contraction of $\tilde{\Delta}$ onto a smooth point $\hat{x_0}$ of $\hat{S}$. For each $i \in \{2,3+,3-\}$, let $\hat{E}_i = {\tau_1}_*(\tilde{E}_i)$. The support of ${\tau_1}_*(\sigma^*\Delta) = \hat{E}_2 + \hat{E}_{3+} + \hat{E}_{3-}$ is a union of a $(-1)$-curve, and two $(-2)$-curves all intersecting at $\hat{x_0}$. 
	
    Let $\tau_2:\hat{S} \to \breve{S}$ denote the contraction of $\hat{E}_2$ onto a smooth point $\breve{x_0}$ of $\breve{S}$ and for each $i \in \{3+,3-\}$, let $\breve{E}_i = {\tau_2}_*(\hat{E}_i)$. Then $\breve{E}_{3+}$ and $\breve{E}_{3-}$ are $(-1)$-curves that intersect tangentially at $\breve{x_0}$. In summary, we have the following diagrams representing the support of $\sigma^*(\Delta) \subset \tilde{S}$ and its image after contracting the $(-1)$-curves described above.

    \begin{equation*}  \label {threepictures}
\setlength{\unitlength}{1mm}
\scalebox{.8}{\fbox{\begin{picture}(35,45)(-20,-3)
\put(0,0){\line(0,1){40}}
\put(-10,10){\line(1,0){20}}
\put(-10,20){\line(1,0){20}}
\put(-10,30){\line(1,0){20}}
\put(-2,0){\makebox(0,0){\tiny $\tilde \Delta$}}
\put(-12,10){\makebox(0,0)[r]{\tiny $\tilde E_{2}$}}
\put(-12,20){\makebox(0,0)[r]{\tiny $\tilde E_{3+}$}}
\put(-12,30){\makebox(0,0)[r]{\tiny $\tilde E_{3-}$}}
\put(8,11){\makebox(0,0)[b]{\tiny $-2$}}
\put(8,21){\makebox(0,0)[b]{\tiny $-3$}}
\put(8,31){\makebox(0,0)[b]{\tiny $-3$}}
\put(-1,38){\makebox(0,0)[r]{\tiny $-1$}}
\put(-18,-2){\makebox(0,0)[b]{$\tilde S$}}
\end{picture}}
\quad $\xrightarrow{\tau_1}$ \quad
\fbox{\begin{picture}(35,45)(-20,-3)
\put(0,20){\circle*{1}}
\put(-10,20){\line(1,0){20}}
\put(0,20){\line(1,1){10}}
\put(0,20){\line(1,-1){10}}
\put(0,20){\line(-1,1){10}}
\put(0,20){\line(-1,-1){10}}
\put(-12,10){\makebox(0,0)[r]{\tiny $\hat{E}_{2}$}}
\put(-12,20){\makebox(0,0)[r]{\tiny $\hat{E}_{3+}$}}
\put(-12,30){\makebox(0,0)[r]{\tiny $\hat{E}_{3-}$}}
\put(8,9){\makebox(0,0)[t]{\tiny $-2$}}
\put(8,21){\makebox(0,0)[b]{\tiny $-2$}}
\put(8,31){\makebox(0,0)[b]{\tiny $-1$}}
\put(0,18){\makebox(0,0)[t]{\tiny $\hat{x}_0$}}
\put(-18,-2){\makebox(0,0)[b]{$\hat{S}$}}
\end{picture}}
\quad $\xrightarrow{\tau_2}$ \quad
\fbox{\begin{picture}(35,45)(-17,-3)
\qbezier(-10,30)(10,20)(-10,10)
\qbezier(10,30)(-10,20)(10,10)
\put(0,20){\circle*{1}}
\put(-12,10){\makebox(0,0)[r]{\tiny $\breve E_{3+}$}}
\put(12,10){\makebox(0,0)[l]{\tiny $\breve E_{3-}$}}
\put(-10,28){\makebox(0,0)[r]{\tiny $-1$}}
\put(9,28){\makebox(0,0)[l]{\tiny $-1$}}
\put(-15,-2){\makebox(0,0)[b]{$\breve S$}}
\put(1,20){\makebox(0,0)[l]{\tiny $\breve x_0$}}
\end{picture}}}
\end{equation*}

    \begin{lemma}\label{SDotDelPezzo}
        With notation as in \ref{23412Setup}, $\breve{S}$ is a smooth del Pezzo surface of degree 2. 
    \end{lemma}
    \begin{proof}
        Observe that $\breve{E}_{3+} + \breve{E}_{3-}$ is an effective anti-canonical divisor on $\breve{E}$ satisfying $(\breve{E}_{3+} + \breve{E}_{3-})^2 = 2$. To prove $\breve{E}_{3+} + \breve{E}_{3-}$ is ample, by Theorem \ref{NakaiMoishezon} it suffices to show that $\breve{C}\cdot (\breve{E}_{3+} + \breve{E}_{3-}) > 0$ for every irreducible curve $\breve{C}$ in $\breve{S}$. If $\breve{C} \in \{\breve{E}_{3+}, \breve{E}_{3-}\}$, then $\breve{C}\cdot (\breve{E}_{3+} + \breve{E}_{3-}) = 1$. Otherwise, since $\Delta$ is ample, $\breve{C}$ is the image of some curve intersecting $\sigma^{-1}(\Delta)$ and so $\breve{C}\cdot (\breve{E}_{3+} + \breve{E}_{3-}) > 0$.        
    \end{proof}

\subsubsection*{Exclusion of anti-canonical polar cylinders in $\Proj B_{2,3,4,12}$.}

This subsection shows that $S = \Proj B_{2,3,4,12}$ does not contain an anti-canonical polar cylinder.

\begin{lemma}\label{linearsystem3Delta} With the notation of \ref{23412setup},

\begin{enumerate}[\rm(a)]
    \item a member of the complete linear system $|3 \Delta|\subset \Div(S)$ other than $3 \Delta$ is an integral curve $C$ on $S$ of form $V_+(f)$ where $f = x_2 + \lambda x_3^3$ for some $\lambda \in \Comp$. 
    \item The intersection of $C \cap (S \setminus \Delta) \isom C \cap D_+(x_3)$ is isomorphic to $\Spec(\Comp[X_0, X_1] / \lb X_0^2 + X_1^3 + \lambda^4 + 1 \rb$ which is nonsingular when $\lambda^4 \neq -1$ and is isomorphic to the cuspidal plane curve $V(X_0^2 + X_1^3) \subset \Aff^2$ when $\lambda^4 = -1$. 
    \item For every member $C \in |3 \Delta|$ other than $3\Delta$, the log pair $(S, \frac{2}{3} C)$ is log-canonical at every point of $S \setminus \Delta$.
\end{enumerate}

\end{lemma}
\begin{proof}
    Parts (a) and (b) are easy to verify. For (c), we note that since $S \setminus \Delta$ is regular, if $(S, \frac{2}{3} C)$ is not log-canonical at a point $p$, then $p$ must be the unique singular point of $C$. Computing a log resolution of $(S,\frac{2}{3}C)$, we find three exceptional curves lying over $p$, whose discrepancies are $-\frac{1}{3}, 0$ and $-\frac{1}{3}$.    
\end{proof}

We state Lemma 2.2 in \cite{cheltsov_park_won_2016}. We note that while the authors assume that their surface $S$ has at most Du Val singularities, both the statement and proof remain valid for normal projective surfaces with quotient singularities.

\begin{lemma}\label{epsilonChanges}$($\cite[Lemma 2.2]{cheltsov_park_won_2016} $)$
    Let $S$ be a normal projective surface with at most quotient singularities. Let $D = \sum_{i = 1}^r a_i C_i$ where $C_i$ is a prime divisor and $a_i > 0$ for all $i$ and let $T = \sum_{i = 1}^r b_i C_i$ where $b_i \geq 0$ for all $i$. Suppose furthermore that
    \begin{itemize}
    \item $D \sim_\Rat T$, 
    \item $D$ and $T$ are distinct.
    \end{itemize}
    For every non-negative rational $\epsilon$, set $D_\epsilon = (1 + \epsilon)D - \epsilon T$. Then

    \begin{enumerate}[\rm(a)]
        \item $D_\epsilon \sim_\Rat D$ for every $\epsilon \geq 0$;
        \item the set $\setspec{\epsilon \in \Rat^+}{D_\epsilon \text{ is effective}}$ attains its supremum, which we will denote by $\mu$;
        \item there exists an irreducible component of $\Supp(T)$ that is not contained in $\Supp(D_\mu)$;
        \item if the log pair $(S,T)$ is log-canonical at a point $p$ but $(S,D)$ is not log-canonical at $p$, then $(S, D_\mu)$ is not log-canonical at $p$.
    \end{enumerate}
\end{lemma}

\begin{lemma}\cite[Lemma 3.4]{affineCones} \label{CPWdegree2}
		Let $S$ be a smooth del Pezzo surface of degree $2$ and let $D$ be an effective $\Rat$-divisor such that $D \sim_\Rat -K_S$. Suppose that the log pair $(S,D)$ is not log-canonical at $p$. Then 
  \begin{enumerate}[\rm(a)]
      \item there exists a unique divisor $C  \in |-K_S|$ such that $(S,C)$ is not log-canonical at $p$.
      \item The support of $D$ contains all the irreducible components of $C$ where either
      \begin{itemize}
      \item $C$ is an irreducible rational curve with a cusp at $p$
      \item $C = C_1 + C_2$ where $C_1$ and $C_2$ are $(-1)$-curves meeting tangentially at $p$.
      \end{itemize}
  \end{enumerate}
  \end{lemma}

\begin{lemma}\label{whereIsp}
Let $D$ be an effective anti-canonical $\Rat$-divisor on $S$ such that the log pair $(S,D)$ is not log-canonical at
a point $p$. Then
\begin{enumerate}[\rm(a)]
    \item $p \in \Delta$,
    \item $\Delta \subseteq \Supp(D)$.
\end{enumerate}
\end{lemma}
\begin{proof}
    Assume $p \in S \setminus \Delta$. Since $p$ is a regular point of $S$, Lemma \ref{lem:multlc} implies that $\mult_p(D) > 1$. Observe that there exists a unique member $C_p \in |3\Delta|$ passing through $p$. If $C_p$ is not contained $\Supp(D)$, then (recalling from \ref{23412setup} that $K_S^2 = \frac{2}{3}$)
    $$1 = \frac{3}{2}(K_S)^2 = C_p \cdot D \geq \mult_p(C_p) \mult_p(D) > 1$$
    which is impossible. It follows that 
    \begin{equation}\label{supportContainsCp}
        \text{$C_p$ is contained in $\Supp(D)$.}
    \end{equation}

    Let $T = \frac{2}{3}C_p$ and note that $T \sim_\Rat 2\Delta \sim_\Rat D$. Since $(S,T)$ is log canonical at $p$ (by Lemma \ref{linearsystem3Delta}(c)) but $(S,D)$ is not, we have in particular $D \neq T$. By Lemma \ref{epsilonChanges}, if we define $D_\epsilon = (1+\epsilon)D-\epsilon T$ for $\epsilon \in \Rat^+$, then the maximum element $\mu$ of
    $\setspec{ \epsilon \in \Rat^+}{D_\epsilon \geq 0}$ exists and $D_\mu \in \Div(S,\Rat)$ satisfies:
    $$
    D_\mu \geq 0, \qquad D_\mu \sim_\Rat -K_S, \qquad (S, D_\mu) \text{ is not log canonical at $p$}, \qquad C_p \nsubseteq \Supp(D_\mu).
    $$
    On the other hand, applying \eqref{supportContainsCp} to $D_\mu$ shows that $C_p \subseteq \Supp(D_\mu)$. This contradiction proves (a).
    
We prove (b). If $\Delta \subseteq \Supp(D)$, the proof is complete, so suppose $\Delta$ is not an irreducible component of $\Supp(D)$. If $p$ is a regular point of $S$, then $\mult_p(D) > 1$ by Lemma \ref{lem:multlc} and by (a) we find 
$$\frac{1}{3} = D \cdot \Delta \geq \mult_p(D) \cdot \mult_p(\Delta) > 1$$
which is impossible. Thus, $p \in \{P_2,P_{3+},P_{3-}\}$ is a singular point of $S$ of type $\frac{1}{k}(1,1)$ where $k \in \{2,3\}$. Let $\mu : \bar{S} \to S$ denote the minimal resolution of the point $p$. The exceptional divisor $\bar{E}$ is a projective line and $\bar{E}^2 = -k$. We have
    \begin{enumerate}[\rm(i)]
        \item $K_{\bar{S}} = \mu^*K_S - \frac{k-2}{k}\bar{E}$,
        \item $\mu^*\Delta = \bar{\Delta} + \frac{1}{k} \bar{E}$ where $\bar{\Delta}$ is the strict transform of $\Delta$,  
        \item $\mu^*D = \bar{D} + \frac{1}{k}\mult_p(D)\bar{E}$.  
    \end{enumerate}
    
    Since $(S,D)$ is not log-canonical at $p$, there exists a point $\bar{p} \in \bar{E}$ such that the log pair $(\bar{S}, \bar{D} + \frac{1}{k}(\mult_p(D) + k-2)\bar{E}$ is not log-canonical at $\bar{p}$. Since $\bar{p}$ is a regular point of $\bar{S}$ and of $\bar{E}$, it follows that 
    $$1 < \mult_{\bar{p}}(\bar{D} + \frac{1}{k}(\mult_p(D) + k-2)\bar{E}) = \mult_{\bar{p}}(\bar{D}) + \frac{1}{k}(\mult_p(D) + k-2) \leq \mult_p D + \frac{1}{k}(\mult_p(D) + k-2)$$ 
    since $\mult_{\bar{p}}(\bar{D}) \leq \mult_p(D)$. We find $\frac{k+1}{k} \mult_p(D) > \frac{2}{k}$ and hence that $\mult_p(D) > \frac{2}{k+1}$.  Since $\Delta$ is not an irreducible component of $\Supp(D)$, we have
    $$\frac{1}{3} = \Delta \cdot D \geq (\Delta \cdot D)_p \geq \frac{1}{k} \mult_p(D) > \frac{2}{k(k+1)}$$ which implies that $k > 2$. Thus $k = 3$ and so $p \in \{P_{3+},P_{3-}\}$.
    
    Without loss of generality, we may assume $p = P_{3+}$. With the notation of \ref{sigmaDef}, we have $-K_{\tilde{S}} \sim \sigma^*(K_S) + \frac{1}{3}\tilde{E}_{3+} + \frac{1}{3}\tilde{E}_{3-} \sim 2\tilde{\Delta} + \tilde{E}_2 + \tilde{E}_{3+} + \tilde{E}_{3-}$ where $\tilde{\Delta}$ is the strict transform of $\Delta$ and $\tilde{\Delta}^2 = -1$. The divisor $\tilde{D} = \sigma_*(D)+ \frac{1}{3}\tilde{E}_{3+} + \frac{1}{3}E_{3-}$ is an effective anti-canonical $\Rat$-divisor on $\tilde{S}$ and $\tilde{\Delta} \cdot \tilde{D} = \tilde{\Delta} \cdot(2\tilde{\Delta} + \tilde{E}_2 + \tilde{E}_{3+} + \tilde{E}_{3-}) = 1$. Since the log pair $(S,D)$ is not log-canonical at $P_{3+}$, the log pair $(\tilde{S}, \tilde{D})$ is not log-canonical at some point $\tilde{p} \in \tilde{E}_{3+}$. Since $\Delta \nsubseteq \Supp(D)$, $\tilde{\Delta} \nsubseteq \Supp(\tilde{D})$. Also, since $\tilde{S}$ is regular at every point of $\tilde{E}_{3+}$, we must have $\tilde{p} \in \tilde{E}_{3+} \setminus \bar{\Delta}$ (otherwise we would have inequality $1 = (\tilde{\Delta} \cdot \tilde{D}) \geq (\tilde{\Delta} \cdot \tilde{D})_{\tilde{p}} \geq \mult_{\tilde{p}} \tilde{D} > 1$). Let $\tau = \tau_2 \circ \tau_1: \tilde{S} \to \breve{S}$ denote the contraction of $\tilde{\Delta} \cup \tilde{E}_2$ onto the smooth point $\breve{x_0}$ of the smooth degree 2 del Pezzo surface $\breve{S}$ (by Lemma \ref{SDotDelPezzo}). Then (by \ref{23412Setup}) $\breve{E}_{3+} + \breve{E}_{3-}$ is anti-canonical integral divisor on $\breve{S}$ consisting of two $(-1)$-curves intersecting tangentially at $\breve{x_0} \in \breve{S}$ and $\breve{D} = \tau_* \tilde{D}$ is an effective anti-canonical $\Rat$-divisor on $\breve{S}$ containing $\breve{E}_{3+}$ and $\breve{E}_{3-}$ in its support. Since the log pair $(\tilde{S}, \tilde{D})$ is not log-canonical at the point $\tilde{p} \in \tilde{E}_{3+} \setminus \tilde{\Delta}$, the log pair $(\breve{S}, \breve{D})$ is not log-canonical at the point $\tau(\tilde{p}) \neq \breve{x_0}$. Let $\breve{p} = \tau(\tilde{p})$. By Lemma \ref{CPWdegree2} (a), there exists a unique anti-canonical divisor $\breve{C}$ on $\breve{S}$ such that the log pair $(\breve{S}, \breve{C})$ is not log-canonical at $\breve{p}$ (in which case $\mult_{\breve{p}}(\breve{C}) > 1$). By Lemma \ref{CPWdegree2} (b), $\Supp(\breve{C}) \subseteq \Supp(\breve{D})$. Since $\breve{E}_{3+}$ is a $(-1)$-curve on $\breve{S}$ and $\breve{C}$ is an anti-canonical divisor on $\breve{S}$, the adjunction formula implies $\breve{E}_{3+} \cdot \breve{C} = 1$. This implies that $\breve{C}$ contains $\breve{E}_{3+}$ in its support (otherwise we would obtain the impossible $1 = \breve{E}_{3+} \cdot \breve{C} \geq (\breve{E}_{3+} \cdot \breve{C})_p \geq \mult_{\breve{p}}(\breve{C}) > 1$). Since $\breve{E}_{3+}$ does not have a cusp, Lemma \ref{CPWdegree2} (b) gives that $\breve{C} = \breve{E}_{3+} + \breve{C'}$ where $\breve{C'}$ is a $(-1)$-curve intersecting $\breve{E}_{3+}$ tangentially at $\breve{p}$. Since $\breve{p} \neq \breve{x_0}$, we have $\breve{C'} \neq \breve{E}_{3-}$ and since $\breve{E}_{3-}$ is a $(-1)$-curve, we obtain $1 = \breve{C} \cdot \breve{E}_{3-} = (\breve{E}_{3+} + \breve{C'})  \cdot \breve{E}_{3-} = 2 + \breve{C'} \cdot \breve{E}_{3-} > 2$. This contradiction shows our initial assumption $\Delta \nsubseteq \Supp(D)$ is impossible and completes the proof of (b). 
     
    \end{proof}

    \begin{proposition}\label{Main23412}
        The surface $S = \Proj(B_{2,3,4,12})$ does not contain an anti-canonical polar cylinder.
    \end{proposition}

    \begin{proof}
        Suppose $S$ contains an anti-canonical polar cylinder $U = S \setminus \Supp(D)$. Let $\tilde{D} = \sigma^*(D) + \frac{1}{3}\tilde{E}_{3+} +\frac{1}{3}E_{3-}$ and observe that by \eqref{canonicalFormula2}, $\tilde{D}$ is an anticanonical $\Rat$-divisor of $\tilde{S}$. Since $U$ is smooth, $P_2, P_{3+}, P_{3-} \in \Supp(D)$ and so $\tilde{E}_2, \tilde{E}_{3+}, \tilde{E}_{3-}$ are contained in $\Supp(\tilde{D})$. By Lemma \ref{cylinderSDNotLogCanonical}, there exists a point $p \in S$ such that the log pair $(S,D)$ is not log-canonical at $p$. By Lemma \ref{whereIsp}, $\Delta \subseteq \Supp(D)$ and so $\tilde{\Delta}$ is also contained in $\Supp(\tilde{D})$. Now $\sigma^{-1}(U) = \tilde{S} \setminus \Supp(\sigma^*(D)) = \tilde{S} \setminus \Supp(\tilde{D})$ is an anti-canonical polar cylinder in $\tilde{S}$. Let $\tau = \tau_2 \circ \tau_1 : \tilde{S} \to \breve{S}$ be as in \ref{23412Setup}. Then $\breve{S} \setminus \Supp(\tau_*\tilde{D}) = \tau(\sigma^{-1}(U))$ is an anti-canonical polar cylinder in the smooth degree 2 del Pezzo surface $\breve{S}$ (by Lemma \ref{SDotDelPezzo}). This contradicts Theorem \ref{thm:CPWantiCanonical}.  
    \end{proof}

\begin{corollary}\label{hardCases}
    Let $B = B_{a_0,a_1,a_2,a_3}$ where $(a_0,a_1,a_2,a_3) \in \{(2,3,5,30), (2,3,4,12)\}$. Then $B^{(d)}$ is rigid for all $d\in \Nat^+$.
\end{corollary}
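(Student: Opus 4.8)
The plan is to obtain Corollary \ref{hardCases} as an immediate consequence of Propositions \ref{Main23530} and \ref{Main23412} together with the reduction recorded in Proposition \ref{antiCanonical}. First I would check that both tuples belong to $\Gamma_3^-$. One has $\cotype(2,3,5,30) = \cotype(2,3,4,12) = 0$, and the amplitudes are negative: for $(2,3,5,30)$ one has $L = \lcm(2,3,5,30) = 30$, so $S = \Proj B_{2,3,5,30} \subset \PPP(15,10,6,1)$ and $\alpha = 30 - (15+10+6+1) = -2 < 0$; for $(2,3,4,12)$ one has $L = 12$, so $S = \Proj B_{2,3,4,12} \subset \PPP(6,4,3,1)$ and $\alpha = 12 - (6+4+3+1) = -2 < 0$. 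Moreover $\min \geq 2$ and in each case exactly one entry equals $2$, so both tuples occur in the list of Lemma \ref{whatsInTminus}, and Proposition \ref{antiCanonical} applies to $X = \Proj B$ in either case.

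Next I would combine the pieces. By Proposition \ref{antiCanonical}(a), $X$ is a del Pezzo surface, and by Proposition \ref{antiCanonical}(b), there exists $d \in \Nat^+$ with $B^{(d)}$ not rigid if and only if $X$ admits a $-K_X$-polar cylinder. By Definition \ref{def:HPolarCylinder}, such a cylinder is precisely an anti-canonical polar cylinder of $X$. But Proposition \ref{Main23530} asserts that $\Proj B_{2,3,5,30}$ contains no anti-canonical polar cylinder, and Proposition \ref{Main23412} asserts the same for $\Proj B_{2,3,4,12}$. Therefore, in both cases, no Veronese subring $B^{(d)}$ is non-rigid, i.e. $B^{(d)}$ is rigid for every $d \in \Nat^+$, which is the claim (taking $d = 1$ also gives that $B$ itself is rigid).

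At this point there is no remaining obstacle: all the difficulty has been absorbed into Propositions \ref{Main23530} and \ref{Main23412}, whose proofs hinge on the birational geometry of the two del Pezzo surfaces $\Proj B_{2,3,5,30}$ and $\Proj B_{2,3,4,12}$ — surfaces carrying cyclic quotient singularities of type $\tfrac{1}{k}(1,1)$ that are not Du Val — via the auxiliary smooth and weak del Pezzo surfaces constructed in Sections \ref{23530Rigid} and \ref{23412Rigid}, Lemma \ref{cylinderSDNotLogCanonical}, and the structural results Lemma \ref{lem:linearSystemDP1}, Corollary \ref{CPWeasyCor}, Lemma \ref{CPWdegree2} and Theorem \ref{thm:CPWantiCanonical}. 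Finally I would note that, combined with Propositions \ref{Tplusrigid}, \ref{easyCases} and \ref{-KPolar33443355} and the reductions of Section \ref{sec:reduction}, Corollary \ref{hardCases} completes the proof of the Main Theorem.
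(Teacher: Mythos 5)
Your proof is correct and follows the same route as the paper: invoke Propositions \ref{Main23530} and \ref{Main23412} to exclude anti-canonical polar cylinders in $\Proj B$ and then apply Proposition \ref{antiCanonical}(b) to conclude rigidity of all Veronese subrings. The extra bookkeeping you include (verifying $\cotype = 0$ and $\alpha = -2 < 0$ so that the tuples lie in $\Gamma_3^-$ and Proposition \ref{antiCanonical} applies) is already established earlier in the paper via Lemma \ref{whatsInTminus}, but spelling it out is harmless.
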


\begin{proof}
    Let $X = \Proj B$. By Propositions \ref{Main23530} and \ref{Main23412}, $X$ does not contain an anti-canonical polar cylinder, so by Theorem \ref{antiCanonical} (b),  $B^{(d)}$ is rigid for all $d \in \Nat^+$.
\end{proof}

We can finally prove the Main Theorem. 

\begin{maintheorem}
    Let $n \geq 2$, and let $B_{a_0, a_1,a_2,a_3} = \bk[X_0, X_1,X_2,X_3] / \langle X_0^{a_0} + X_1^{a_1} + X_2^{a_2} +  X_3^{a_3} \rangle$ be a Pham-Brieskorn ring. If $\min\{a_0, a_1, a_2,a_3 \} \geq 2$ and at most one element $i$ of $\{0,1,2,3\}$ satisfies $a_i = 2$, then $B_{a_0,a_1,a_2,a_3}$ is rigid. Moreover, if $\cotype(a_0, a_1, a_2, a_3) = 0$, then $(B_{a_0, a_1, a_2, a_3})^{(d)}$ is rigid for all $d \in \Nat^+$. 
\end{maintheorem}
\begin{proof}
The claim that $B_{a_0. a_1, a_2, a_3}$ is rigid follows immediately from Corollary \ref{toFinish} and Corollary \ref{hardCases}. The ``moreover" follows from Corollary \ref{BdRigid}.   
\end{proof}

\section{Remarks in Higher Dimensions}\label{Sec:Higher}

\begin{nothing}
	We conclude by offering some remarks on the Main Conjecture 
 in higher dimensions (where $n > 3$). Recall that by Theorem \ref{reduction}, it suffices to prove the conjecture for the cases $B_{a_0, \dots, a_n}$ where $\cotype(a_0, \dots, a_n) = 0$, namely, the cases where $\Proj B_{a_0, \dots, a_n}$ is a well-formed quasismooth weighted hypersurface. Note that the canonical sheaf and canonical divisor of $X = \Proj B_{a_0, \dots, a_n}$ can still be computed using Theorems \ref{dualizing} and \ref{PBCanonical2}.	
\end{nothing}

\begin{lemma}
	Let $n \geq 4$ and let $X = \Proj B_{a_0, \dots, a_n}$. Then $\CaCl(X) \isom \Integ$. 
\end{lemma}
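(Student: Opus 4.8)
The plan is to compute the Picard group of a well-formed quasismooth weighted complete intersection $X=\Proj B_{a_0,\ldots,a_n}$ of dimension $n-1\geq 3$ by combining the Lefschetz-type results available for such hypersurfaces with the fact that $X$ is $\Rat$-factorial with only quotient singularities. First I would recall that, since $\cotype(a_0,\ldots,a_n)=0$, Proposition \ref{PBWellFormed} gives that $X$ is a well-formed quasismooth weighted hypersurface, and by Proposition \ref{cyclicQuotientSingularitiesPB} it is normal, Cohen-Macaulay, $\Rat$-factorial, with at most cyclic quotient singularities; moreover by Proposition \ref{DimcaCodim2} its singular locus has codimension $\geq 2$ in $X$ (indeed codimension $\geq n-1 \geq 3$ since $\Sing(X)=X\cap\Sing(\PPP)$ is a finite union of lower-dimensional linear sections). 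The key input is a weighted analogue of the Grothendieck–Lefschetz theorem for the divisor class group: for a well-formed quasismooth weighted complete intersection $X\subset\PPP(w_0,\ldots,w_n)$ of dimension $\geq 3$, the class group $\Cl(X)$ is infinite cyclic, generated by $\OSheaf_X(1)$ (this is the content of the results of Dimca, or Mori, on the Picard/class group of weighted complete intersections; see \cite{Dimca1986}).

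The steps I would carry out, in order, are: (1) invoke the weighted Lefschetz theorem to get $\Cl(X)\isom\Integ$, generated by the class of $\OSheaf_X(1)$; (2) observe that since $X$ is normal and has quotient singularities, hence is $\Rat$-factorial, the group $\Cl(X)$ is finitely generated and the natural inclusion $\CaCl(X)=\Pic(X)\hookrightarrow\Cl(X)$ identifies $\Pic(X)$ with a finite-index subgroup of $\Integ$, hence $\Pic(X)\isom\Integ$; (3) identify the generator concretely. For step (3) I would note that by Lemma \ref{FletcherCohomology} and Theorem \ref{PBCanonical2}, the reflexive sheaf $\OSheaf_X(1)$ corresponds to the $\Rat$-divisor $H$ of Theorem \ref{Demazure}, and $d\cdot H$ is Cartier for the appropriate $d$; the exact power of $H$ which becomes Cartier (equivalently, the index of $\Pic(X)$ in $\Cl(X)$) can be pinned down but is not needed for the statement, since any infinite-index—wait, finite-index—subgroup of $\Integ$ is again $\isom\Integ$. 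So the conclusion $\CaCl(X)\isom\Integ$ follows formally once we know $\Cl(X)\isom\Integ$ and $\Pic(X)$ has finite index in it.

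The main obstacle is establishing, or correctly citing, the weighted Lefschetz statement $\Cl(X)\isom\Integ$ for $\dim X\geq 3$. In the smooth (unweighted) case this is classical Grothendieck–Lefschetz; in the weighted quasismooth case one must be careful because $X$ is singular, so one works on the smooth locus $X_{\mathrm{reg}}$ (whose complement has codimension $\geq 3$, so $\Cl(X)=\Cl(X_{\mathrm{reg}})=\Pic(X_{\mathrm{reg}})$) and applies Lefschetz to an ambient quasi-smooth simplicial toric variety or to the affine cone picture. Alternatively one can use the fact, due to Mori and to Dolgachev, that for a well-formed quasismooth weighted complete intersection of dimension $\geq 3$ the restriction $\Cl(\PPP)\to\Cl(X)$ is an isomorphism, together with $\Cl(\PPP(w_0,\ldots,w_n))\isom\Integ$. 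Once that is in hand, the rest—descending from $\Cl$ to $\CaCl=\Pic$ using $\Rat$-factoriality, and noting finite-index subgroups of $\Integ$ are $\isom\Integ$—is routine and I would not spell it out in detail.

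\textit{Proof sketch.} Since $\cotype(a_0,\ldots,a_n)=0$, Proposition \ref{PBWellFormed} shows $X=\Proj B_{a_0,\ldots,a_n}$ is a well-formed quasismooth weighted hypersurface, of dimension $n-1\geq 3$. By Proposition \ref{cyclicQuotientSingularitiesPB}, $X$ is normal with at most quotient singularities and is $\Rat$-factorial, and by Proposition \ref{DimcaCodim2}, $\Sing(X)=X\cap\Sing(\PPP)$ has codimension $\geq 3$ in $X$. Hence restriction to the regular locus gives $\Cl(X)\isom\Cl(X_{\mathrm{reg}})=\Pic(X_{\mathrm{reg}})$. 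The weighted Grothendieck–Lefschetz theorem for quasismooth well-formed complete intersections of dimension $\geq 3$ (\cite{Dimca1986}) gives that the restriction map $\Cl(\PPP(w_0,\ldots,w_n))\to\Cl(X)$ is an isomorphism; since $\Cl(\PPP(w_0,\ldots,w_n))\isom\Integ$ is generated by $\OSheaf_{\PPP}(1)$, we conclude $\Cl(X)\isom\Integ$, generated by $\OSheaf_X(1)$. Finally, because $X$ is $\Rat$-factorial, the natural injection $\CaCl(X)=\Pic(X)\hookrightarrow\Cl(X)$ has image of finite index: indeed, for the ample $\Rat$-divisor $H$ of Theorem \ref{Demazure} one has, by Theorem \ref{PBCanonical2}(b), that $K_X\sim\alpha H$ is $\Rat$-Cartier, and more generally some positive multiple $dH$ of the generator is Cartier. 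A finite-index subgroup of $\Integ$ is isomorphic to $\Integ$, whence $\CaCl(X)\isom\Integ$. $\square$
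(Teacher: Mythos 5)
Your argument, conditional on the hypothesis that $\cotype(a_0,\dots,a_n)=0$, is essentially sound and parallels the paper's in spirit: both rely on a Lefschetz-type theorem for quasismooth well-formed weighted complete intersections of dimension $\geq 3$, and both then identify $\CaCl(X)$ with $\Pic(X)$ because $X$ is integral. The difference is that you route through the divisor class group (Lefschetz for $\Cl$, then descend to $\Pic$ using $\Rat$-factoriality and the fact that a nonzero subgroup of $\Integ$ is $\cong\Integ$), whereas the paper simply cites Theorem 3.2.4 of \cite{dolgachev}, which directly gives $\Pic(X)\cong\Integ$ for a quasismooth weighted complete intersection of dimension at least $3$. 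Your longer route is valid, though the specific attribution to \cite{Dimca1986} of the isomorphism $\Cl(\PPP)\to\Cl(X)$ should be double-checked; Dolgachev's Theorem 3.2.4 is the cleaner citation and is what the paper actually uses. (Also, the parenthetical claim that $\Sing(X)$ has codimension $\geq n-1$ is not true in general and is not needed: codimension $\geq 2$, which holds for any normal variety, already gives $\Cl(X)=\Cl(X_{\mathrm{reg}})$.)

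However, there is a genuine gap at the very start: the lemma as stated makes no cotype-$0$ hypothesis, yet your proof opens with ``since $\cotype(a_0,\ldots,a_n)=0$'' and everything downstream (well-formedness, quasismoothness, Propositions \ref{PBWellFormed}, \ref{cyclicQuotientSingularitiesPB}, \ref{DimcaCodim2}, the Lefschetz theorem) depends on that assumption. When $\cotype>0$ the hypersurface $X_f\subset\PPP(w_0,\dots,w_n)$ is not well-formed, so those propositions do not apply to it directly. The paper's proof begins precisely by disposing of this issue: it invokes the fact (Lemma 3.1.14(b) of \cite{ChitayatThesis}) that for any $(b_0,\dots,b_n)$ there is a tuple $(a_0,\dots,a_n)$ of cotype $0$ with $\Proj(B_{b_0,\dots,b_n})\cong\Proj(B_{a_0,\dots,a_n})$, after which one may assume cotype $0$. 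Without this reduction step your argument does not apply to the general $X=\Proj B_{a_0,\dots,a_n}$ in the statement, only to the well-formed subcase.
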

\begin{proof}

    For every $(b_0, \dots, b_n)$, there exists some $(a_0, \dots, a_n)$ such that $\Proj(B_{b_0, \dots, b_n}) \isom \Proj(B_{a_0, \dots, a_n})$ where  $\cotype(a_0, \dots, a_n) = 0$. (See Lemma 3.1.14 (b) in \cite{ChitayatThesis}.) So, we may assume that $\cotype(a_0, \dots, a_n) = 0$. Since $X$ is integral, $\CaCl(X) \isom \Pic(X) \isom \Integ$ (the second isomorphism by Theorem 3.2.4 of \cite{dolgachev} since $\dim(X) \geq 3$).
\end{proof}

\begin{proposition}\label{allHpolar}
	Let $X$ be a projective normal $\Rat$-factorial variety and suppose that $\CaCl(X) \isom \Integ$. Let $H$ be any ample $\Rat$-divisor on $X$. Then every cylinder $U$ in $X$ is an $H$-polar cylinder. 
\end{proposition}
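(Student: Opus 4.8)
Let $U \subseteq X$ be a cylinder, so $U$ is a non-empty affine open subset with $U \cong C \times \Aff^1$ for some affine scheme $C$. The goal is to produce an effective $\Rat$-Cartier $\Rat$-divisor $D$ supported exactly on $X \setminus U$ with $D \sim_\Rat sH$ for some $s \in \Rat^+$. The first step is to reduce $X \setminus U$ to its divisorial part: write $Z = X \setminus U$ with its reduced structure and let $Z_1, \ldots, Z_r$ be the irreducible components of $Z$ of codimension one. I claim the union of the $Z_i$ is exactly $Z$, i.e.\ $Z$ has pure codimension one. Indeed, the open immersion $U \hookrightarrow X$ is an affine morphism (since $U$ is affine), and for a Noetherian normal (hence $S_2$) scheme, the complement of an affine open subset is either empty or of pure codimension one — this is the standard consequence of the fact that $j_* \OSheaf_U / \OSheaf_X$ cannot be supported in codimension $\geq 2$ when $j$ is affine, or alternatively Hartshorne's result on cohomological dimension together with the $S_2$ property. (If $X$ itself is affine there is nothing to prove since then $X$ is not projective unless it is a point; in our applications $X$ is a positive-dimensional projective variety, so $X \setminus U \neq \emptyset$.)

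\textbf{Main step: building the divisor.} Having written $X \setminus U = \bigcup_{i=1}^r Z_i$ with each $Z_i$ a prime divisor, set $D_0 = \sum_{i=1}^r Z_i \in \Div(X)$, an effective divisor with $\Supp(D_0) = X \setminus U$. Since $X$ is $\Rat$-factorial, $D_0$ is $\Rat$-Cartier. Now I use the hypothesis $\CaCl(X) \cong \Integ$: choosing a generator, there is an integer $m \geq 1$ such that $mD_0$ is Cartier, and its class $[mD_0] \in \CaCl(X) \cong \Integ$ equals $k \cdot g$ for some $k \in \Integ$, where $g$ generates $\CaCl(X)$. Because $H$ is ample, some positive multiple $NH$ is a very ample Cartier divisor, so $[NH] = \ell \cdot g$ with $\ell \neq 0$; replacing $g$ by $-g$ if necessary we may take $\ell > 0$. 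The key point is the sign of $k$: since $D_0$ is a non-zero effective divisor and $\CaCl(X) \cong \Integ$ with the ample generator being the "positive" one, $D_0$ lies in the effective cone and hence $[mD_0]$ is a non-negative multiple of the ample generator $g$; moreover $k > 0$ because $D_0$ meets (positively) a curve moving in a high multiple of $H$ — concretely, $D_0 \cdot H^{\dim X - 1} > 0$ since $D_0 \neq 0$ is effective and $H$ is ample. Thus $k > 0$. Then $\ell \cdot mD_0$ and $k \cdot NH$ are Cartier divisors with the same class in $\CaCl(X)$, hence linearly equivalent: $\ell m D_0 \sim kN H$. Setting $s = \frac{kN}{\ell m} \in \Rat^+$ and $D = D_0$, we get $D \sim_\Rat sH$, $D$ effective, $D$ $\Rat$-Cartier, and $\Supp(D) = X \setminus U$, so $U = X \setminus \Supp(D)$ is an $H$-polar cylinder.

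\textbf{Expected main obstacle.} The genuinely delicate point is the purity claim — that the complement of an affine open subset of a projective normal variety has pure codimension one — and, relatedly, pinning down that the class $[D_0]$ is a \emph{positive} multiple of the ample generator of $\CaCl(X)$ rather than zero or negative. The positivity is handled cleanly by intersecting with $H^{\dim X - 1}$ (valid since $\dim X \geq 1$; in the intended application $\dim X = n - 1 \geq 3$), giving $D_0 \cdot H^{\dim X - 1} > 0$ because $D_0$ is a non-zero effective $\Rat$-Cartier divisor and $H$ is ample (Nakai–Moishezon type positivity). The purity statement is classical: since $U$ is affine, $j : U \hookrightarrow X$ is an affine morphism, and for a normal (so $S_2$) Noetherian scheme this forces $X \setminus U$ to be a (possibly empty) divisor; one may cite Hartshorne's result on affine complements or argue via local cohomology using that $\mathrm{depth}_{\OSheaf_{X,x}} \geq 2$ at every point of codimension $\geq 2$. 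I would spell this out as a short lemma or a citation, then the rest of the argument is the bookkeeping above.

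\begin{proof}
Since $X$ is projective of positive dimension and $U$ is affine, $U \neq X$, so $Z := X \setminus U$, given its reduced structure, is a non-empty closed subset. The inclusion $j : U \hookrightarrow X$ is an affine morphism because $U$ is affine. As $X$ is normal it satisfies Serre's condition $S_2$, and the complement of an affine open subset of a Noetherian $S_2$ scheme has pure codimension one (see e.g.\ \cite{hartshornedeRham}); hence we may write $Z = \bigcup_{i=1}^r Z_i$ with each $Z_i$ a prime divisor of $X$.

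Put $D = \sum_{i=1}^r Z_i \in \Div(X)$. Then $D$ is effective, $\Supp(D) = X \setminus U$, and $D$ is $\Rat$-Cartier since $X$ is $\Rat$-factorial. Choose $m \in \Nat^+$ with $mD$ Cartier and $N \in \Nat^+$ with $NH$ a very ample Cartier divisor. Fix a generator $g$ of $\CaCl(X) \cong \Integ$, chosen so that $[NH] = \ell g$ with $\ell \in \Nat^+$; write $[mD] = kg$ with $k \in \Integ$. Since $D$ is a non-zero effective $\Rat$-Cartier divisor and $H$ is ample, we have $D \cdot H^{\dim X - 1} > 0$, and since $H \sim_\Rat \tfrac{1}{\ell}\cdot \tfrac{1}{N}(mD)$ would force this intersection number to have the sign of $k/\ell$ times a positive quantity, we conclude $k > 0$.

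Now $\ell(mD)$ and $k(NH)$ are Cartier divisors with the same class $k\ell g$ in $\CaCl(X) \cong \Pic(X)$, hence $\ell m D \sim kN H$. Setting $s = \dfrac{kN}{\ell m} \in \Rat^+$, we obtain an effective $\Rat$-Cartier $\Rat$-divisor $D$ with $D \sim_\Rat sH$ and $U = X \setminus \Supp(D)$. Therefore $U$ is an $H$-polar cylinder.
\end{proof}
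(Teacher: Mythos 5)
Your proof is correct and follows essentially the same route as the paper: reduce $X \setminus U$ to a nonzero effective $\Rat$-Cartier divisor $D$ supported on the complement (the paper cites EGA~IV, Cor.\ 21.12.7, for the pure-codimension-one statement — the de Rham reference you point to only gives connectedness of the complement), then use $\CaCl(X)\cong\Integ$ together with positivity of $D$ to produce a $\Rat$-linear equivalence $D\sim_\Rat sH$ with $s\in\Rat^+$. Two small cleanups: the hypothesis $\CaCl(X)\cong\Integ$ already forces $\dim X\geq 1$ (a point has trivial $\CaCl$), so the nonemptiness of $X\setminus U$ should be derived from that rather than assumed about ``intended applications''; and where you establish $k>0$ via $D\cdot H^{\dim X-1}>0$, the paper instead quotes Lazarsfeld Example~1.2.4 (a nonzero effective divisor on a projective variety with $\Pic\cong\Integ$ is ample), which is an equivalent and equally clean way to settle the sign.
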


\begin{proof}
	Let $U$ be a cylinder in $X$. Since $U$ us affine, the inclusion $i:U \to X$ is an affine morphism. Corollary 21.12.7 of \cite{EGA4} implies that $U = X \setminus \Supp(D)$ for some effective divisor $D \in \Div(X)$. Since $X$ is $\Rat$-factorial, we may assume that $D$ is Cartier. Let $H$ be any ample $\Rat$-divisor of $X$. We must show that $U$ is $H$-polar.

	If $D = 0$, then $\Supp(D) = \emptyset$ and so $X = U$ is both affine and projective. This implies that $X$ is a point, contradicting the assumption that $\CaCl(X) \isom \Integ$; it follows that $D \neq 0$. Since $D$ is a nonzero effective divisor and $\Pic(X) \isom \CaCl(X) \isom \Integ$, Example 1.2.4 of \cite{lazarsfeld2004positivity1} implies that $D$ is ample.
	
	Choose $s \in \Nat^+$ such that $sH$ is Cartier. Then both $sH$ and $D$ belong to $\CaDiv(X)$. Since $\CaCl(X) \isom \Integ$, there exist $m,n \in \Integ$ such that $m > 0$ and $m(sH) \sim nD$. Since $m(sH)$ is ample, so is $nD$, and it can be checked that $n > 0$. Since $m(sH) \sim nD$ where $m, s, n \in \Nat^+$, Remark \ref{HH'polar} implies that every $D$-polar cylinder is $H$-polar. Since $U$ is $D$-polar, it is therefore $H$-polar.
\end{proof}

\begin{corollary}\label{notRigidIffCylinder}
	Let $n \geq 4$, and let $B = B_{a_0, \dots, a_n}$ be such that $\cotype(a_0, \dots, a_n)= 0$. Then $B$ is not rigid if and only if $\Proj B$ contains a cylinder. 
\end{corollary}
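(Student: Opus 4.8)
Here is the plan. The statement should follow by chaining together results already in hand; the content is a careful matching of hypotheses rather than a new argument.

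\textbf{Forward direction.} Assume $B = B_{a_0,\dots,a_n}$ is not rigid. I would simply apply Theorem \ref{edh83yf6r79hvujhxu6wrefji9e}: by \ref{PBGrading}, $B$ is a normal $\Nat$-graded affine $\bk$-domain, and $\trdeg_{\bk} B = n \geq 4 \geq 2$ so the transcendence degree over $B_0 = \bk$ is at least $2$; moreover $B$ is saturated in codimension $1$ by Remark \ref{PBsatCodim1}, since $\cotype(a_0,\dots,a_n) = 0$. Hence the equivalence of (b) and (c) in that theorem produces a homogeneous $h \in B \setminus \{0\}$ of positive degree with $D_+(h) \subseteq \Proj B$ a cylinder; in particular $\Proj B$ contains a cylinder.

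\textbf{Reverse direction.} Assume $X = \Proj B$ contains a cylinder $U$. I would first record the structural facts: $X$ is normal and projective (\ref{PBGrading}); since $\cotype(a_0,\dots,a_n) = 0$, Proposition \ref{PBWellFormed} shows $X$ is a well-formed quasismooth weighted hypersurface, hence $\Rat$-factorial by Proposition \ref{cyclicQuotientSingularitiesPB} (e); and $\CaCl(X) \isom \Integ$ by the preceding Lemma, where the hypothesis $n \geq 4$ is used. Next I would verify the hypotheses of Theorem \ref{Demazure}: one has $\height(B_+) = \dim B = n > 1$, and $e(B) = 1$ because if $d = \gcd(w_0,\dots,w_n)$ then $d \mid L/a_i$, i.e. $a_i \mid L/d$, for every $i$, whence $L = \lcm(a_0,\dots,a_n) \mid L/d$ and so $d = 1$. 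Theorem \ref{Demazure} then provides a homogeneous $T \in \Frac B$ of degree $1$ together with an ample $\Rat$-divisor $H$ on $X$. Now Proposition \ref{allHpolar} (applicable since $X$ is projective, normal, $\Rat$-factorial with $\CaCl(X) \isom \Integ$, and $H$ is ample) shows that $U$ is $H$-polar, and Lemma \ref{HpolarPrincipal} then gives $m \geq 1$ and $h \in B_m \setminus \{0\}$ with $U = D_+(h)$. Thus condition (b) of Theorem \ref{edh83yf6r79hvujhxu6wrefji9e} holds, and since $B$ is normal and saturated in codimension $1$, condition (c) holds: $B$ is not rigid.

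\textbf{Main obstacle.} There is no deep step; the only point needing care is in the reverse direction, namely upgrading an a priori arbitrary cylinder $U \subseteq X$ to one of the special form $D_+(h)$ to which Theorem \ref{edh83yf6r79hvujhxu6wrefji9e} applies. This is exactly where $\CaCl(X) \isom \Integ$ and $\Rat$-factoriality enter (through Proposition \ref{allHpolar} and Lemma \ref{HpolarPrincipal}), and it is the source of the hypothesis $n \geq 4$ — in dimensions $\leq 3$ the class group can be larger, so the argument genuinely uses this. Everything else is a routine check that the standing hypotheses of the cited theorems are met.
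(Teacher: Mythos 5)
Your proof is correct. Both your argument and the paper's rest on Proposition \ref{allHpolar} — the lemma $\CaCl(X)\isom\Integ$ is what makes $n\ge 4$ matter in each case — but you route through it differently. The paper first splits on the sign of the amplitude $\alpha$: for $\alpha=0$ it invokes Theorem \ref{PBCanonical2}(c) to get both sides of the equivalence false, and for $\alpha\ne 0$ it uses Theorem \ref{PBCanonical2}(d) to identify $\pm K_X$ as an ample divisor and then applies Proposition \ref{allHpolar} to $sK_X$. You instead apply Proposition \ref{allHpolar} uniformly to Demazure's degree-one divisor $H$, which requires you to check $e(B)=1$ and $\height(B_+)>1$ by hand (your computation $\gcd(w_0,\dots,w_n)=1$ is fine), and then you close the loop with Lemma \ref{HpolarPrincipal} and Theorem \ref{edh83yf6r79hvujhxu6wrefji9e}. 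The gain of your version is that it avoids the case split on $\alpha$ entirely and never has to pin down what the relevant ample divisor ``is'' geometrically; the gain of the paper's version is that it makes explicit which of $K_X$, $-K_X$ is ample and ties this corollary to the canonical/anti-canonical polar cylinder framework that the rest of the paper uses. Both are sound.
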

\begin{proof}
	Let $\alpha$ denote the amplitude of $X = \Proj B$. If $\alpha \geq 0$ then $K_X$ is pseudoeffective, $X$ does not contain a cylinder by Proposition \ref{noCylinder} and $B$ is rigid by Corollary \ref{Tplusrigid}. Otherwise $\alpha < 0$ and  $B$ is not rigid if and only if $X$ contains a $-K_X$-polar cylinder if and only if $X$ contains a cylinder. (The first equivalence is by Theorem \ref{PBCanonical2} (c), the second is by Proposition \ref{allHpolar}.)
\end{proof}

\begin{remark}
It is well known that for $n \geq 4$, $B_{a_0, \dots, a_n}$ is a unique factorization domain. (See Proposition 4.9.7 of \cite{ChitayatThesis} for one proof.) For $n = 2$ and $n = 3$ however, Exercise II.6.5 in \cite{Hartshorne} and \cite{storch1984picard} show that this is generally not the case.
\end{remark}

\begin{nothing}
	Let $n \geq 4$, suppose $\cotype(a_0, \dots, a_n) = 0$ and let $X = \Proj(B_{a_0, \dots, a_n})$. As in the $n = 3$ case, the problem naturally splits into the cases where $-K_X$ is ample or $K_X$ is pseudoeffective.
		
	If $\alpha < 0$,  then Theorem \ref{PBCanonical2} implies that $-K_X$ is an ample $\Rat$-Cartier divisor, so $X$ is a Fano variety and by Corollary \ref{notRigidIffCylinder}, we are motivated to investigate the more general question of existence of cylinders in singular Fano varieties, a question which is generally open in dimension 2 or larger. 
	
    If $\alpha \geq 0$, then $X$ has quotient (hence log-canonical) singularities and $K_X$ is either an ample or trivial $\Rat$-Cartier divisor and hence is pseudoeffective. It follows from Proposition \ref{noCylinder} that $X$ cannot contain a cylinder and from Corollary \ref{notRigidIffCylinder} that $B_{a_0, \dots, a_n}$ is rigid. 
\end{nothing}    

\bibliographystyle{alpha}
\bibliography{remainingbibliography}
\end{document}